\newcommand{\driverOption}{}
  \renewcommand{\driverOption}{pdftex}
  \renewcommand{\driverOption}{dvips}
\newcommand{\hyperrefDriverOption}{}
	\renewcommand{\hyperrefDriverOption}{pdftex}
	\renewcommand{\hyperrefDriverOption}{hypertex}
	\newcommand{\TM}[1]{\marginpar{\parbox{4cm}{{\small {\bf TM:} #1}}}}
	\newcommand{\RS}[1]{\marginpar{\parbox{4cm}{{\small {\bf RS:} #1}}}}
	\newcommand{\TM}[1]{}
	\newcommand{\RS}[1]{}
\newtheorem{theorem}{Theorem}
\newtheorem{lemma}[theorem]{Lemma}
\newtheorem{proposition}[theorem]{Proposition}
\newtheorem{corollary}[theorem]{Corollary}
\theoremstyle{definition}
\theoremstyle{remark}
\newtheorem{remark}[theorem]{Remark}
\DeclareMathOperator{\AP}{\textsc{AvoidPaths}}
\long\def\symbolfootnote[#1]#2{\begingroup
\def\thefootnote{\fnsymbol{footnote}}\footnote[#1]{#2}\endgroup}
\begin{document}

\begin{center}

\vspace*{1.5cm}
\LARGE On the path-avoidance vertex-coloring game
\vspace{1cm}

\small

\begingroup
\renewcommand\thefootnote{\fnsymbol{footnote}}%
\begin{tabular}{l@{\hspace{3em}}l}
  \Large Torsten Mütze  & \Large Reto Spöhel\footnotemark[2] \\[2mm]
  Institute of Theoretical Computer Science & Algorithms and Complexity Group \\
  ETH Zürich                       &  Max-Planck-Institut für Informatik \\
  8092 Zürich, Switzerland                    & 66123 Saarbrücken, Germany \vspace{.05mm} \\
  {\small {\tt muetzet@inf.ethz.ch}}       & {\small {\tt rspoehel@mpi-inf.mpg.de}}
\end{tabular}%
\footnotetext[2]{The author was supported by a fellowship of the Swiss National Science Foundation.}%
\endgroup

\vspace{2cm}

\small

\begin{minipage}{0.8\linewidth} \textsc{Abstract.}
For any graph $F$ and any integer $r\geq 2$, the \emph{online vertex-Ramsey density of $F$ and $r$}, denoted $m^*(F,r)$, is a parameter defined via a deterministic two-player Ramsey-type game (Painter vs.\ Builder). This parameter was introduced in a recent paper \cite{mrs11}, where it was shown that the online vertex-Ramsey density determines the threshold of a similar probabilistic one-player game (Painter vs.\ the binomial random graph $G_{n,p}$). For a large class of graphs $F$, including cliques, cycles, complete bipartite graphs, hypercubes, wheels, and stars of arbitrary size, a simple greedy strategy is optimal for Painter and closed formulas for $m^*(F,r)$ are known.

In this work we show that for the case where $F=P_\ell$ is a (long) path, the picture is very different. It is not hard to see that $m^*(P_\ell,r)= 1-1/k^*(P_\ell,r)$ for an appropriately defined integer $k^*(P_\ell,r)$, and that the greedy strategy gives a lower bound of $k^*(P_\ell,r)\geq \ell^r$. We construct and analyze Painter strategies that improve on this greedy lower bound by a factor polynomial in $\ell$, and we show that no superpolynomial improvement is possible.
\end{minipage}

\end{center}

\vspace{5mm}

%\tableofcontents

\section{Introduction}

\subsection{The online vertex-Ramsey density}

Consider the following deterministic two-player game: The two players are called \emph{Builder} and \emph{Painter}, and the board is a vertex-colored graph that grows in each step of the game. Painter wants to avoid creating a monochromatic copy of some fixed graph~$F$, and her opponent Builder wants to force her to create such a monochromatic copy.
The game starts with an empty board, i.e., no vertices are present at the beginning of the game. In each step, Builder presents a new vertex and a number of edges leading from previous vertices to this new vertex. Painter has to color the new vertex immediately and irrevocably with one of $r$ available colors, and she loses as soon as she creates a monochromatic copy of $F$. So far this game would be rather trivial; however, we additionally impose the restriction on Builder that, for some fixed real number~$d$ known to both players, the evolving board~$B$ satisfies $m(B)\leq d$ at all times, where as usual we define
\begin{equation*}
  m(B):=\max_{H\seq B} \frac{e(H)}{v(H)} \enspace,
\end{equation*}
and $e(H)$ and $v(H)$ denote the number of edges and vertices of $H$, respectively.
We will refer to this game as the \emph{$F$-avoidance game with $r$ colors and density restriction~$d$}.

We say that \emph{Builder has a winning strategy} in this game (for a fixed graph~$F$, a fixed number of colors~$r$, and a fixed density restriction $d$) if he can force Painter to create a monochromatic copy of~$F$ within a finite number of steps.
For any graph $F$ and any integer $r\geq 2$ we define the \emph{online vertex-Ramsey density} $m^*(F,r)$ as
\begin{equation} \label{eq:m1*}
  m^*(F,r):=\inf \left\{
    d\in\RR \bigmid
    \leftbox{0.52\displaywidth}{Builder has a winning strategy in the $F$-avoidance game with $r$ colors and density restriction $d$}
  \right\} \enspace.
\end{equation}

The parameter $m^*(F,r)$ was introduced in \cite{mrs11}, where we established a general correspondence between the deterministic two-player game we just introduced, and a similar probabilistic one-player game. (We will explain this correspondence in the next section.) In \cite{mrs11} we also proved the following result.

\begin{theorem}[\cite{mrs11}] \label{thm:main-1}
For any graph $F$ with at least one edge and any integer $r\geq 2$, the online vertex-Ramsey density $m^*(F,r)$ is a computable rational number, and the infimum in \eqref{eq:m1*} is attained as a minimum.
\end{theorem}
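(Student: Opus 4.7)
The game-theoretic setup is standard enough that several basic reductions can be made. The game is open in favor of Builder—his winning condition is witnessed by a finite event, namely the appearance of a monochromatic copy of $F$—so by the Gale--Stewart theorem it is determined, and whenever Builder wins he does so in a bounded number of moves along every branch of his strategy. Consequently every winning Builder strategy can be encoded as a finite game tree $T$, and one can associate to it the rational number $d(T) := \max\{m(B) : B \text{ is a board on } T\}$, the smallest density restriction under which $T$ is legal. This gives the rewriting
\begin{equation*}
m^*(F,r) \;=\; \inf\bigl\{d(T) : T \text{ is a winning Builder tree}\bigr\}\,,
\end{equation*}
so $m^*(F,r)$ is an infimum of rationals, and all three assertions of the theorem (rationality, attainment, and computability) reduce to bounding the complexity of near-optimal Builder trees.

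The core of the argument I would give is a \emph{bounding lemma} of the following shape: there is a function $N = N(F, r, d)$ such that whenever Builder wins the game at density restriction $d$, he has a winning strategy in which every board $B$ appearing in the tree satisfies $v(B) \le N$. Granting this, set $d_0 := m^*(F,r) + 1$ and $N_0 := N(F, r, d_0)$; then the relevant $d(T)$-values all have the form $e(H)/v(H)$ with $v(H) \le N_0$, which form a discrete set of rationals with denominator at most $N_0$. Hence the infimum defining $m^*(F,r)$ is attained by some $T^*$, the value $m^*(F,r) = d(T^*)$ is rational, and it can be computed by exhaustively enumerating all game trees whose boards have size at most $N_0$, selecting those that are winning (a finite decision procedure), evaluating $d(T)$ for each, and taking the minimum.

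The main obstacle is of course establishing the bounding lemma. A priori, optimal Builder strategies could involve arbitrarily large boards, since the density restriction $m(B) \le d$ allows graphs of unbounded size as long as they remain sparse. The approach I would attempt is a pigeonhole-and-prune compactness argument: given a winning Builder tree at density $d$ that is arbitrarily deep, identify along some long branch two positions in which the local structure around the current vertex and the coloring pattern Painter has produced are isomorphic, then splice out the intermediate segment to obtain a smaller winning tree still respecting $m(B) \le d$. Carrying this out rigorously, while respecting the global density constraint during the splicing, is the technical heart of the proof, and the correct notion of \emph{local structure} that makes the pigeonhole step go through is the delicate point.
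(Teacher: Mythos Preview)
This theorem is not proved in the present paper at all: it is quoted from~\cite{mrs11}, and the paper only uses it as background. So there is no ``paper's own proof'' to compare against here; the actual argument lives in the cited reference, where (as the remark after the theorem indicates) it is established via an explicit algorithm that computes $m^*(F,r)$ directly.

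That said, your proposal is not a proof but a proof \emph{plan}, and you are candid about this: the bounding lemma is the entire content, and you do not prove it. The pigeonhole-and-splice idea you sketch faces exactly the obstacle you flag---the density constraint $m(B)\le d$ is global, so cutting out a segment of a branch can change $m(B)$ in ways that are hard to control, and it is not clear what finite ``local type'' of a position would make the pigeonholing go through. Absent that, nothing has been shown.

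There is also a quieter gap in how you deploy the bounding lemma even if it were available. Your lemma gives $N=N(F,r,d)$ depending on $d$, and you then fix $d_0:=m^*(F,r)+1$ and $N_0:=N(F,r,d_0)$ and assert that ``the relevant $d(T)$-values'' have denominator at most~$N_0$. But the lemma applied at $d_0$ only produces a \emph{single} winning tree with small boards at density $d_0$; it says nothing about winning trees at densities $d<d_0$ closer to the infimum. To push the argument through you would need $N$ independent of $d$ (or at least monotone in the right direction), and intuitively the monotonicity goes the wrong way: a tighter density restriction should force \emph{larger} boards, not smaller ones. Without a uniform bound you cannot conclude that the candidate values $d(T)$ near $m^*(F,r)$ lie in a discrete set, and the attainment and rationality claims do not follow.
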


To put Theorem~\ref{thm:main-1} into perspective, we mention that none of its three statements (computable, rational, infimum attained as minimum) is known to hold for the offline counterpart of $m^*(F,r)$, i.e., for the \emph{vertex-Ramsey density}
\begin{equation*}
  m^o(F,r):=\inf \left\{
    m(G) \bigmid
    \leftbox{0.38\displaywidth}{every $r$-coloring of the vertices of $G$ contains a monochromatic copy of $F$}
  \right\}
\end{equation*}
introduced in~\cite{MR1248487}. It is also not known whether such statements are true for two analogous parameters related to \emph{edge}-colorings (see~\cite{org-new, MR2152058}). In fact, even the value of $m^o(P_3, 2)$ is unknown --- the authors of \cite{MR1248487} offer 400,000 z{\l}oty (Polish currency in 1993) for its exact determination (here $P_3$ denotes the path on three vertices).

\subsection{Background: a probabilistic one-player game} \label{sec:background}

The main motivation for investigating the deterministic two-player game introduced above comes from the theory of random graphs. More specifically, following work of {\L}uczak, Ruci\'nski, and Voigt~\cite{MR1182457} on vertex-Ramsey properties of random graphs, the following one-player game was studied in \cite{ovg-combinatorica}:
As usual, we denote by $\Gnp$ the random graph on $n$ vertices obtained by including each of the $\binom{n}{2}$ possible edges with probability $p=p(n)$ independently. The vertices of an initially hidden instance of $\Gnp$ are revealed one by one, and at each step of the game only the edges induced by the vertices revealed so far are visible.
As in the deterministic game introduced above, the player Painter immediately and irrevocably assigns one of $r$ available colors to each vertex as soon as it is revealed, with the goal of avoiding monochromatic copies of a fixed graph~$F$. We refer to this game as the \emph{probabilistic $F$-avoidance game with $r$ colors}.

It follows from standard arguments (see~\cite[Lemma 7]{org-lb}) that this game has a threshold $p_0(F,r,n)$ in the following sense: For any function $p(n) = o(p_0)$ there is an online strategy that \aas colors the vertices of $\Gnp$ with $r$ colors without creating a monochromatic copy of $F$, and for any function~$p(n)=\omega(p_0)$ \emph{any} online strategy will \aas fail to do so. (Here \aas stands for `asymptotically almost surely', i.e., with probability tending to 1 as $n$ tends to infinity.)

In a recent paper \cite{mrs11} we extended the results of \cite{ovg-combinatorica} on this probabilistic game and established the following general threshold result.

\begin{theorem}[\cite{mrs11}] \label{thm:main-2}
For any fixed graph $F$ with at least one edge and any fixed integer $r\geq 2$, the threshold of the probabilistic $F$-avoidance game with $r$ colors is
\begin{equation*}
  p_0(F,r,n)=n^{-1/{m^*(F,r)}} \enspace,
\end{equation*}
where $m^*(F,r)$ is defined in \eqref{eq:m1*}.
\end{theorem}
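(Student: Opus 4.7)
The plan is to prove the threshold statement by transferring strategies between the deterministic game of Theorem~\ref{thm:main-1} and the probabilistic game on $G_{n,p}$. Abbreviate $m^* := m^*(F,r)$; the two directions (0-statement and 1-statement) are established by separate arguments.

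For the 0-statement ($p = o(n^{-1/m^*})$, Painter wins a.a.s.), I would fix some $d < m^*$ for which (by Theorem~\ref{thm:main-1}) Painter has a winning strategy $\sigma$ in the deterministic game with density restriction $d$, and let her play $\sigma$ on the online-revealed vertices of $G_{n,p}$ as though an honest Builder had presented them. This simulation is valid as long as every visible subgraph satisfies $m(\cdot) \le d$, so it suffices to show $m(G_{n,p}) \le d$ a.a.s. A first-moment calculation handles this: for any fixed graph $H$ with $e(H)/v(H) \ge m^*$ one computes that the expected number of copies $n^{v(H)} p^{e(H)}$ is $o(1)$ when $p = o(n^{-1/m^*})$; subgraphs of unbounded order are reduced to their densest ``cores'', and rationality of $m^*$ (Theorem~\ref{thm:main-1}) lets us pick $d < m^*$ with no density falling in the gap $(d, m^*)$, since the densities of bounded-order graphs form a discrete set.

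For the 1-statement ($p = \omega(n^{-1/m^*})$, every Painter strategy fails a.a.s.), I would fix a Builder strategy $\tau$ that wins the deterministic game under density restriction $m^*$ in a fixed finite number of rounds, unfold it against every possible Painter coloring into a finite decision tree, and attempt to realize this tree online inside $G_{n,p}$. Every graph appearing in the tree has density at most $m^*$ by construction. The core probabilistic claim is that whenever $\tau$ asks Builder to add a new vertex with a prescribed neighborhood on previously embedded vertices, such a vertex appears in $G_{n,p}$ a.a.s.\ within a short time window: a suitable extension calculation shows the expected number of candidates tends to infinity, the relevant exponent being controlled precisely by the fact that the density of $\tau$'s evolving board never exceeds $m^*$. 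Concentration (Janson's inequality or a second-moment computation) together with a union bound over the finitely many branches of the decision tree then forces any online Painter strategy into a monochromatic copy of $F$.

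The main obstacle lies in this simulation step for the 1-statement. In the deterministic game Builder freely chooses the order of vertex presentations and their neighborhoods; in the probabilistic game both are fixed by the randomness, and Painter's colors on already-revealed vertices are permanently committed. Engineering an online embedding that respects this commitment --- typically by reserving disjoint blocks of $G_{n,p}$ for independent simulation attempts, sequencing the extensions carefully, and controlling their success probabilities uniformly across Painter's adaptive responses and across the decision tree's branches --- is where most of the technical work lies, and it is the step at which the density bound $m(\cdot) \le m^*$ inherent in $\tau$ plays its crucial role.
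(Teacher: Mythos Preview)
This theorem is not proved in the present paper; it is quoted from~\cite{mrs11} as background, and no argument for it appears anywhere in the text. There is therefore nothing in this paper to compare your proposal against.

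That said, your sketch of the 0-statement has a real gap. You want to choose $d<m^*$ and argue that a.a.s.\ $m(G_{n,p})\le d$ when $p=o(n^{-1/m^*})$, but graph densities $e(H)/v(H)$ exhaust the positive rationals, so no interval $(d,m^*)$ free of densities exists. Your appeal to ``the densities of bounded-order graphs form a discrete set'' begs the question: you have not explained why only bounded-order subgraphs matter, and the densest ``core'' of a large subgraph may itself be arbitrarily large. Concretely, for any $d'\in(d,m^*)$ and any fixed balanced graph $H$ with $m(H)=d'$, the appearance threshold for $H$ in $G_{n,p}$ is $n^{-1/d'}<n^{-1/m^*}$, so for $p$ just below $n^{-1/m^*}$ such $H$ do appear a.a.s.\ and your density restriction is violated. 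The actual 0-statement in~\cite{mrs11} requires a finer structural analysis of $G_{n,p}$ than a bare first-moment bound on $m(G_{n,p})$. Your outline of the 1-statement is closer to the standard approach, and you correctly identify the online embedding of Builder's finite decision tree into $G_{n,p}$ as the place where the substantive work lies.
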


Theorem~\ref{thm:main-2} reduces the problem of determining the threshold of the probabilistic $F$-avoidance game to the purely deterministic combinatorial problem of computing $m^*(F,r)$. Moreover, we can bound the threshold of the probabilistic game by deriving bounds on $m^*(F,r)$, which in turn can be done by designing and analyzing appropriate Painter and Builder strategies for the deterministic $F$-avoidance game.

\subsection{Closed formulas for the online vertex-Ramsey density}

The algorithm presented in \cite{mrs11} to compute $m^*(F,r)$ for general $F$ and $r$ is rather complex and gives no hint as to how the quantity $m^*(F,r)$ behaves for natural graph families. However, for a large class of graphs $F$, a simple closed formula for the parameter $m^*(F,r)$ follows from the results in~\cite{ovg-combinatorica}.
This class includes cliques $K_\ell$, cycles $C_\ell$, complete bipartite graphs $K_{s,t}$, $d$-dimensional hypercubes $Q_d$, wheels $W_\ell$ with $\ell$ spokes, and stars $S_\ell$ with $\ell$ rays. In all those cases, the online vertex-Ramsey density is given by $m^*(F,r)=\frac{e(F)(1-v(F)^{-r})}{v(F)-1}$, i.e., we have
\begin{equation} \label{eq:explicit-m1*}
\begin{split}
  m^*(K_\ell,r) &= \textstyle \frac{\ell(1-\ell^{-r})}{2} \enspace, \\
  m^*(K_{s,t},r) &= \textstyle \frac{st(1-(s+t)^{-r})}{s+t-1} \enspace, \\
  m^*(W_\ell,r) &= \textstyle 2(1-(\ell+1)^{-r}) \enspace,
\end{split}
\qquad
\begin{split}
  m^*(C_\ell,r) &= \textstyle \frac{\ell(1-\ell^{-r})}{\ell-1} \enspace, \\
  m^*(Q_d,r) &= \textstyle \frac{d2^{d-1}(1-2^{-dr})}{2^d-1} \enspace, \\
  m^*(S_\ell,r) &= \textstyle 1-(\ell+1)^{-r} \enspace.
\end{split}
\end{equation}
The reason why the parameter $m^*(F,r)$ has such a simple form in all these cases is that for those graphs $F$ the following simple strategy is optimal for Painter: Assuming the colors are numbered from $1,\dots,r$, the \emph{greedy strategy} in each step uses the highest-numbered color that does not complete a monochromatic copy of $F$ (or color $1$ if no such color exists).

In this work we show that the situation is much more complicated in the innocent-looking case where $F=P_\ell$ is a path on $\ell$ vertices. As it turns out, for this family of graphs the greedy strategy fails quite badly, and the parameter $m^*(P_\ell,r)$ exhibits a much more complex behaviour than one might expect in view of the previous examples.

\subsection{Forests}

We first introduce a more convenient way to express $m^*(F,r)$ for the case where $F$ is an arbitrary forest. Note that a density restriction of the form $d=(k-1)/k$ for some integer $k\geq 2$ is equivalent to requiring that Builder creates no cycles and no components (=trees) with more than $k$ vertices. We call this game the \emph{$F$-avoidance game with $r$ colors and tree size restriction~$k$}.

It is not hard to see that for any forest $F$ and any integer $r\geq 2$, Builder has a winning strategy in the $F$-avoidance game with $r$ colors and tree size restriction $k$ for large enough $k$. We denote by $k^*(F,r)$ the smallest such integer $k$ for which Builder has a winning strategy in this game.

Noting that for any forest $F$ we have
\begin{equation*}
  m^*(F,r)=\frac{k^*(F,r)-1}{k^*(F,r)} \enspace,
\end{equation*}
we obtain the following corollary to Theorem~\ref{thm:main-2}.

\begin{corollary}[\cite{mrs11}] \label{cor:main-2-forests}
For any fixed forest $F$ with at least one edge and any fixed integer $r\geq 2$, the threshold of the probabilistic $F$-avoidance game with $r$ colors is
\begin{equation*}
  p_0(F,r,n)=n^{-1-1/{(k^*(F,r)-1)}} \enspace.
\end{equation*}
\end{corollary}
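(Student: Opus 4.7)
\emph{Proof proposal.} The plan is to reduce the corollary to Theorem~\ref{thm:main-2} by justifying the intermediate identity $m^*(F,r)=(k^*(F,r)-1)/k^*(F,r)$ claimed in the paragraph preceding the statement, and then substituting into the threshold formula.

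First I would check that for forests the density restriction is equivalent to a tree size restriction. If the board $B$ is a forest with components (trees) $T_1,\dots,T_c$, then for any subgraph $H\subseteq B$ we have $e(H)=v(H)-c(H)$, where $c(H)$ denotes the number of components of $H$. Hence
\begin{equation*}
  m(B)=\max_{H\seq B}\frac{v(H)-c(H)}{v(H)}=\max_{i}\frac{v(T_i)-1}{v(T_i)} \enspace,
\end{equation*}
the maximum being attained when $H$ equals the largest tree component of $B$. A density restriction $d<1$ thus forbids cycles and restricts every tree $T_i$ to satisfy $v(T_i)\leq 1/(1-d)$. In particular, for $d\in[(k-1)/k,\,k/(k+1))$ this is precisely the condition $v(T_i)\leq k$, i.e., the $F$-avoidance game with density restriction $d$ coincides with the $F$-avoidance game with tree size restriction $k$.

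From the definition of $k^*(F,r)$ it follows that Builder wins the game with density restriction $d=(k^*-1)/k^*$ (by playing his winning strategy in the tree size restriction $k^*$ game), whereas for any $d<(k^*-1)/k^*$ the induced tree size restriction is at most $k^*-1$, so Builder loses. Taking the infimum yields $m^*(F,r)=(k^*-1)/k^*$, and the infimum is attained as a minimum (consistently with Theorem~\ref{thm:main-1}).

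Plugging this identity into Theorem~\ref{thm:main-2} gives
\begin{equation*}
  p_0(F,r,n)=n^{-1/m^*(F,r)}=n^{-k^*(F,r)/(k^*(F,r)-1)}=n^{-1-1/(k^*(F,r)-1)} \enspace,
\end{equation*}
which is the claimed threshold. I do not anticipate any significant obstacle: the proof is essentially a translation of the density parameter into a tree size parameter, which is clean because on forests $m(B)$ is determined by the single largest component.
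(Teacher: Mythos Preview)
Your proposal is correct and follows exactly the approach the paper takes: the paper states the corollary immediately after noting that $m^*(F,r)=(k^*(F,r)-1)/k^*(F,r)$ (with the equivalence between density and tree size restrictions mentioned in the preceding paragraph), and obtains the threshold by substituting into Theorem~\ref{thm:main-2}. You have simply written out the details the paper leaves implicit; there is nothing to add or correct.
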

For the rest of this paper, we restrict our attention to forests and focus on the parameter $k^*(F,r)$. It follows from the results in~\cite{ovg-combinatorica} that for any tree~$F$ and any integer $r\geq 2$ the greedy strategy  guarantees a lower bound of $k^*(F,r)\geq v(F)^r$ (for the sake of completeness we give the argument explicitly in Lemma~\ref{lemma:greedy-lb} below).

\subsection{Our results}

For the rest of this introduction we focus on the case where $F=P_\ell$ and $r=2$ colors are available. Table~\ref{tab:kstar-Pell} shows the exact values of $k^*(P_\ell,2)$ for $\ell\leq 45$. These were determined with the help of a computer, based on the insights of this paper and using some extra tweaks to improve running times (see Section~\ref{sec:implementation} below).
The bottom row shows the difference $k^*(P_\ell,2)-\ell^2$, i.e., by how much optimal Painter strategies can improve on the greedy lower bound $v(P_\ell)^2=\ell^2$.

\begin{table}
\scriptsize
\setlength{\tabcolsep}{0.8mm}
\begin{center}
\begin{tabular}{|l||r|r|r|r|r|r|r|r|r|r|r|r|r|r|r|r|r|r|r|}\hline
$\ell$ & $2,\ldots,27$ & 28 & 29 & 30 & 31 & 32 & 33 & 34 & 35 & 36 & 37 & 38 & 39 & 40 & 41 & 42 & 43 & 44 & 45 \\ \hline\hline
$k^*(P_\ell,2)$ & $2^2,\ldots,27^2$ & 791 & 841 & 902 & 961 & 1040 & 1089 & 1156 & 1225 & 1323 & 1376 & 1449 & 1521 & 1641 & 1699 & 1796 & 1856 & 1991 & 2057 \\
$k^*(P_\ell,2)-\ell^2$ & 0 & 7 & 0 & 2 & 0 & 16 & 0 & 0 & 0 & 27 & 7 & 5 & 0 & 41 & 18 & 32 & 7 & 55 & 32 \\ \hline
\end{tabular}
\end{center}
\caption{Exact values of $k^*(P_\ell,2)$ for $\ell\leq 45$.} \label{tab:kstar-Pell}
\end{table}

In stark contrast to the formulas in~\eqref{eq:explicit-m1*}, the values in Table~\ref{tab:kstar-Pell} (and the corresponding optimal Painter strategies) exhibit a rather irregular behaviour and seem to follow no discernible pattern. In particular, the greedy strategy turns out to be optimal for $\ell\in\{2,\ldots,27\}\cup\{29,31,33,34,35,39\}$, but not for the other values of $\ell\leq 45$. (In fact, for all $\ell\geq 46$ we have $k^*(P_\ell,2)>\ell^2$, so the listed values are the only ones for which the greedy strategy is optimal.)

These numerical findings raise the question whether and by how much optimal Painter strategies can improve on the greedy lower bound asymptotically as $\ell\to\infty$. Our main result shows that there exist Painter strategies that improve on the greedy lower bound by a factor polynomial in $\ell$, and that no superpolynomial improvement is possible.

\begin{theorem}[Main result] \label{thm:kstar-Pell-2} 
We have
\begin{equation*}
  \Theta(\ell^{2.01})\leq k^*(P_\ell,2) \leq \Theta(\ell^{2.59}) \enspace.
% 2.01 = \log_{416}(481/460)
% 2.59 = 1+\log_2(3)
\end{equation*}
\end{theorem}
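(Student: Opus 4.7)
Theorem~\ref{thm:kstar-Pell-2} consists of two separate bounds, which I would prove independently: the lower bound $k^*(P_\ell,2) = \Omega(\ell^{2.01})$ via a Painter strategy, and the upper bound $k^*(P_\ell,2) = O(\ell^{2.59})$ via a Builder strategy.

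\textbf{Upper bound.} The target exponent $2.59 \approx \log_2 6$ strongly hints at a binary-recursive Builder construction in which halving the target path length $\ell$ costs a constant factor (roughly $6$) in the tree-size budget. The plan is to define, for every pair $(a,b)$, a Builder strategy $\mathcal{B}_{a,b}$ that, within a tree of size at most $f(a,b)$, forces Painter to create either a monochromatic red $P_a$ or a monochromatic blue $P_b$. The inductive step combines a constant number of instances of the smaller strategies $\mathcal{B}_{\lceil a/2\rceil, b}$ and $\mathcal{B}_{a, \lceil b/2\rceil}$, attached at shared anchor vertices, in such a way that Painter is forced to extend an already-forced half-length monochromatic path to full length. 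Solving a recurrence of the shape $f(\ell,\ell) \leq 6 \, f(\lceil \ell/2\rceil, \lceil \ell/2\rceil) + O(\ell)$ then gives $f(\ell,\ell) = O(\ell^{\log_2 6}) = O(\ell^{2.59})$.

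\textbf{Lower bound.} The plan is to exhibit an explicit Painter strategy that asymptotically beats the greedy bound $v(P_\ell)^r = \ell^2$ by a polynomial factor. Rather than following greedy, Painter would classify each tree component by a concise \emph{type} --- for example, a short vector of statistics such as the lengths of the longest red and blue path extensions meeting the current vertex, possibly together with some local structural information --- and select her color by minimizing a carefully designed potential over these types. The analysis then reduces to a combinatorial argument showing that as long as the tree component has size at most $c \, \ell^{2.01}$, no admissible type can be forced to contain a monochromatic $P_\ell$, so Painter survives.

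\textbf{Main obstacle.} I expect the harder side to be the lower bound: Table~\ref{tab:kstar-Pell} shows that the greedy strategy is still optimal for almost every $\ell \leq 45$, and the sporadic improvements that do occur are small and irregular. This makes it delicate to design a Painter strategy whose polynomial advantage over greedy admits a clean asymptotic quantification; any genuine improvement must exploit structural information that is invisible to greedy. For the upper bound, the technical subtlety is to ensure that the constantly-many recursive sub-strategies can be glued together at their anchor vertices without creating cycles or blowing the tree-size budget past $O(\ell^{\log_2 6})$, which requires careful bookkeeping of the vertices shared between substrategies and of how the two possible outcomes (red $P_{a/2}$ or blue $P_{b/2}$) propagate through the recursion.
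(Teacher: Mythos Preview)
Your proposal is a plan rather than a proof, and both halves have genuine gaps.

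\textbf{Upper bound.} You guess the right exponent $\log_2 6$ but the wrong mechanism. A direct recursion of the shape $f(\ell,\ell)\le 6\,f(\lceil\ell/2\rceil,\lceil\ell/2\rceil)$ is not what is proved, and the natural ``glue two red $P_{a/2}$'s and two blue $P_{b/2}$'s at a new vertex'' idea gives only $f(a,b)\le 2f(\lceil a/2\rceil,b)+2f(a,\lceil b/2\rceil)+1$, which solves to $f(\ell,\ell)=O(\ell^4)$, far short of $\ell^{\log_2 6}$. The paper instead passes through the \emph{asymmetric} game: it first reduces the whole problem to an integer recursion along lattice walks (Proposition~\ref{prop:kstar-P1-Pr}), introduces the asymptotic rate $\delta(c)$ satisfying $k^*(P_\ell,P_c)\le \delta(c)\cdot\ell$ (Lemma~\ref{lemma:ub-k-ell-c-delta-c}), and proves $\delta(c)\le 3\,\delta(\lceil (c-1)/2\rceil)$ (Lemma~\ref{lemma:ub-delta-c}). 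Only the $c$-coordinate is halved, at cost a factor $3$; the extra factor $2$ that turns $3$ into $6$ comes from the separate linear dependence on $\ell$. So the $6$ is a derived number, not a count of recursive sub-instances in a Builder strategy.

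\textbf{Lower bound.} Here your plan is essentially empty: ``classify by type and minimize a potential'' describes every conceivable strategy and contains none of the ideas that actually work. The paper's construction is very specific. First, it identifies all reasonable Painter strategies with monotone lattice walks in $\mathbb{Z}^2$ and shows (Lemma~\ref{lemma:periodicity}) that along a long straight final segment the recursion becomes periodic with growth rate $\delta(\alpha)=\min_j (x_{1,j}+\beta(\alpha))/(j+1)$. It then computes $\delta(4)=\tfrac12(\sqrt{13}+5)$ exactly, and \emph{bootstraps}: nesting scaled copies of a near-optimal $(P_\ell,P_4)$ walk into each other (Figure~\ref{fig:bootstrapping}) yields $\delta(4^t)\ge(\delta(4))^t$ (Lemma~\ref{lemma:bootstrapping}). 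Finally, an explicit self-similar walk with scaling parameters $q=1.3$, $s=320$ is extended to a symmetric $(\ell,\ell)$ walk and shown to satisfy $k(\widehat\alpha^{(t)})\ge 0.5\,\widehat\ell_t^{\,2.01}$. The self-similar nesting and the passage through the asymmetric $(P_\ell,P_4)$ game are the decisive ideas, and neither is present in your outline.
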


We prove the bounds in Theorem~\ref{thm:kstar-Pell-2} by analyzing a more general asymmetric version of the path-avoidance game, where Painter's goal is to avoid a path on $\ell$ vertices in color $1$, and a path on $c$ vertices in color $2$. We denote by $k^*(P_\ell,P_c)$ the smallest integer $k$ for which Builder has a winning strategy in this asymmetric $(P_\ell,P_c)$-avoidance game with tree size restriction $k$. 

In the following we present our results for this asymmetric game. The next theorem shows in particular that for any fixed value of $c$, the parameter $k^*(P_\ell,P_c)$ grows linearly with $\ell$.

\begin{theorem} \label{thm:kstar-Pell-Pc}
For any $c\geq 1$ there is a constant $\delta(c)$ such that for any $\ell\geq 1$ we have
\begin{equation*}
  k^*(P_\ell,P_c)=(\delta(c)-o(1))\cdot\ell \enspace,
\end{equation*}
where $o(1)$ stands for a non-negative function of $c$ and $\ell$ that tends to 0 for $c$ fixed and $\ell\rightarrow\infty$.
\end{theorem}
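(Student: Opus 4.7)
The plan is to prove the existence of the limit $\delta(c):=\lim_{\ell\to\infty}k^*(P_\ell,P_c)/\ell$ via an approximate subadditivity argument combined with Fekete's subadditive lemma, after verifying a linear upper bound $k^*(P_\ell,P_c)=O(\ell)$ that ensures $\delta(c)$ is finite.

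The first step is to establish the chaining inequality
\begin{equation*}
  k^*(P_{\ell_1+\ell_2-1},P_c) \;\leq\; k^*(P_{\ell_1},P_c)+k^*(P_{\ell_2},P_c)-1
\end{equation*}
by composing two Builder strategies in series. Let $S_i$ denote an optimal Builder strategy for the $(P_{\ell_i},P_c)$-game with tree-size restriction $k_i:=k^*(P_{\ell_i},P_c)$. In the combined game Builder first plays $S_1$ on a fresh component, forcing either an immediate $P_c$ in color~$2$ (a win) or a $P_{\ell_1}$ in color~$1$ in a tree of at most $k_1$ vertices. In the latter case, Builder picks an endpoint $u$ of this $P_{\ell_1}$ (necessarily colored~$1$) and plays $S_2$ attached to $u$, treating $u$ as the ``initial'' vertex of the second game. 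This either produces a $P_c$ in color~$2$ or yields a $P_{\ell_2}$ in color~$1$ anchored at $u$, which concatenates with the earlier path to form a $P_{\ell_1+\ell_2-1}$ in color~$1$. Since the two subtrees share only $u$, the combined component has at most $k_1+k_2-1$ vertices.

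For the linear upper bound, I would construct a Builder ``gadget'' of size depending only on $c$ that, when appended to any color-$1$ vertex, forces Painter to create either a $P_c$ in color~$2$ or a single additional color-$1$ vertex adjacent to the previous color-$1$ endpoint. Iterating this gadget at most $\ell-1$ times yields $k^*(P_\ell,P_c)\leq A(c)\cdot\ell$ for some constant $A(c)$ depending only on $c$. Combined with the chaining inequality, the shifted sequence $g(m):=k^*(P_{m+1},P_c)-1$ is subadditive and $O(m)$, so Fekete's lemma yields $g(m)/m\to\delta(c):=\inf_{m\geq 1}g(m)/m<\infty$. Translating back to $k^*$ and absorbing the additive shift into the error term gives the claimed asymptotic formula.

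The main obstacle is the ``rooted'' subtlety in the chaining step: a priori, $S_2$'s guarantees depend on Painter assigning the initial color freely, whereas in our chaining the initial vertex $u$ is already colored~$1$, and moreover we need the forced $P_{\ell_2}$ in color~$1$ to have $u$ as one of its endpoints. To resolve this, I would restrict to the branch of $S_2$'s decision tree corresponding to Painter coloring the root vertex with color~$1$; the induced sub-strategy forces a $P_{\ell_2}$ in color~$1$ anchored at the prescribed vertex (or a $P_c$ in color~$2$) while using at most $k_2$ vertices. Since the phase-$2$ component is disjoint from the phase-$1$ component apart from $u$ and all color decisions are online and irrevocable, Painter's phase-$1$ choices impose no constraint on her phase-$2$ responses, and the two strategies decouple cleanly.
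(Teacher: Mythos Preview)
Your chaining step contains a genuine gap. An optimal Builder strategy $S_2$ for the $(P_{\ell_2},P_c)$-game does \emph{not} build a single tree rooted at the first vertex; it constructs a forest of many disjoint trees, each of size at most $k_2$, and the eventual monochromatic $P_{\ell_2}$ in color~$1$ lies in just one of these trees. There is no reason this tree contains the first vertex Builder presented, and even if it does, that vertex need not be an endpoint of the path. Restricting to the branch of $S_2$'s decision tree where the first vertex receives color~$1$ only fixes one color decision; it does nothing to anchor the final path at that vertex. Consequently, ``playing $S_2$ attached to $u$'' yields neither a $P_{\ell_2}$ in color~$1$ extending the path through $u$, nor a single tree of size $\leq k_1+k_2-1$ as you claim. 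The inequality $k^*(P_{\ell_1+\ell_2-1},P_c)\leq k^*(P_{\ell_1},P_c)+k^*(P_{\ell_2},P_c)-1$ may well be true, but your argument does not establish it.

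There is a second, independent problem. Even granting subadditivity of $g(m)=k^*(P_{m+1},P_c)-1$, Fekete's lemma gives $g(m)\geq \delta(c)\,m$ for all $m$ and $g(m)/m\to\delta(c)$ \emph{from above}. This translates to $k^*(P_\ell,P_c)\geq \delta(c)\,\ell-(\delta(c)-1)$, a lower bound. The theorem, however, asserts $k^*(P_\ell,P_c)\leq\delta(c)\,\ell$ for all $\ell$ (this is exactly the ``non-negative $o(1)$'' clause). Subadditivity alone gives convergence from the wrong side; you would need a separate argument for the uniform upper bound $k^*(P_\ell,P_c)\leq\delta(c)\,\ell$. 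The paper obtains this via an explicit definition of $\delta(c)$ as a supremum over Painter strategy sequences and a recursion-based analysis (its Lemma on periodicity of the recursion and the inductive upper bound $k^*(P_\ell,P_c)\leq\delta(c)\,\ell$), which is a completely different mechanism and has no analogue in your proposal.
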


Note that Theorem~\ref{thm:kstar-Pell-Pc} does \emph{not} imply that $k^*(P_\ell,2)=(\delta(\ell)-o(1))\cdot\ell$ as $\ell\rightarrow\infty$.

Similarly to the symmetric game, the greedy strategy guarantees a lower bound of $k^*(P_\ell,P_c)\geq c\cdot\ell$, and it is not hard to see that this is an exact equality for $c\in\{1,2,3\}$ (i.e., the greedy strategy is optimal, see Lemmas~\ref{lemma:greedy-lb} and~\ref{lemma:kstar-F1-F2-ub} below). Thus the constant $\delta(c)$ from Theorem~\ref{thm:kstar-Pell-Pc} satisfies $\delta(c)=c$ for $c\in\{1,2,3\}$. The next theorem states the exact value of $\delta(c)$ for $c\in\{4,5,6\}$. Perhaps surprisingly, these values turn out to be irrational.

\begin{theorem} \label{thm:delta-1to6}
For the constant $\delta(c)$ from Theorem~\ref{thm:kstar-Pell-Pc} we have
\begin{align*}
  \delta(4)&= \textstyle \frac{1}{2}(\sqrt{13}+5)= 4.302\ldots \enspace, \\
  \delta(5)&= \textstyle \frac{1}{2}(\sqrt{24}+6)= 5.449\ldots \enspace, \\
  \delta(6)&= \textstyle \frac{1}{2}(\sqrt{37}+7)= 6.541\ldots \enspace.
\end{align*}
\end{theorem}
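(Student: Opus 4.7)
The plan is to leverage Theorem~\ref{thm:kstar-Pell-Pc}, which already guarantees that the limit $\delta(c) = \lim_{\ell\to\infty} k^*(P_\ell,P_c)/\ell$ exists; our remaining task is to pin down its exact value for $c\in\{4,5,6\}$. For each such $c$, I would establish matching lower and upper bounds, both governed by the common quadratic $x^2-(c+1)x+3=0$ whose larger root is $\delta(c)$. Observe that the discriminants $(c+1)^2-12$ equal $13$, $24$, and $37$ for $c=4,5,6$ respectively, in perfect agreement with the formulas claimed.

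For the lower bound (Painter strategy), I would work with \emph{vertex types} $(a(v),b(v))$, where $a(v)$ and $b(v)$ record the length of the longest monochromatic color-$1$ and color-$2$ path ending at $v$; since each vertex has a single color, exactly one of these is zero. I would introduce a real-valued potential $\phi$ on types and have Painter color each newly presented vertex so as to minimize an aggregate contribution computed from $\phi$ under either color choice. The potential is tuned so that, as long as Builder has not already forced a forbidden path, the total potential on any single component grows by at most $1/\delta(c)$ per added vertex. Since Painter can only lose once some prospective type would complete a monochromatic $P_\ell$ or $P_c$, the resulting tree size must be at least $(\delta(c)-o(1))\ell$. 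Determining the tightest admissible $\phi$ reduces to a small calibration problem, and the fact that a solution exists at exactly the critical value hinges on the identity $\delta(c)^2=(c+1)\delta(c)-3$, which is how the quadratic enters.

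For the upper bound (Builder strategy), I would build the required trees recursively, assembling larger trees out of smaller gadgets by adjoining new common neighbors. At each step the gadgets are chosen so that whichever of the two colors Painter assigns to the new vertex, Builder makes quantifiable progress towards either a color-$1$ path of length $\ell$ or an auxiliary vertex whose type can be reused at the next stage. A careful amortised analysis of vertices used per unit of progress yields a linear recurrence whose fixed point is again the larger root of $x^2-(c+1)x+3$. The constant $3$ emerges naturally as the count of atomic types involved in the recursive step.

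The principal obstacle will be the lower bound: the potential $\phi$ must be calibrated so tightly that the resulting bound matches the irrational $\delta(c)$ exactly, with no constant-factor slack. This forces a finite but delicate case analysis over all types $(a,b)$ with $1\leq b\leq c-1$, and the induction only closes because of the aforementioned algebraic identity. A pleasant byproduct of this approach would be an explanation for why the theorem is stated only up to $c=6$: for $c\geq 7$ the analogous calibration leads to a characteristic polynomial of higher degree, whose roots are no longer guaranteed to admit such a simple closed form in radicals.
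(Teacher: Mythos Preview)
Your proposal correctly identifies the governing quadratic $x^2-(c+1)x+3=0$ and observes that its larger root matches the claimed values for $c\in\{4,5,6\}$. That observation is genuine and useful. However, what you have written is a plan, not a proof: the potential $\phi$ is never defined, the ``small calibration problem'' is never solved, the ``finite but delicate case analysis'' is never carried out, and the Builder strategy is only described in outline. Every step where the actual work would happen is deferred. In particular, you assert that a potential-function argument will close at exactly the irrational value $\delta(c)$ with no slack, but you give no evidence for this beyond the algebraic identity itself; it is far from clear that a single potential on vertex types can encode the optimal Painter behaviour, which (as the paper shows) depends on carefully chosen irrational ratios between turning points of a lattice walk.

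By contrast, the paper's proof works entirely through the recursion framework of Proposition~\ref{prop:kstar-P1-Pr} and the explicit definition of $\delta(c)$ in~\eqref{eq:def-delta-c}. For the upper bound on $\delta(4)$, one parametrises an arbitrary strategy sequence in $W(\ell,4)$ by the $1$-coordinates $\ell_1\leq\ell_2$ of its turning points, bounds the recursion values $x_{1,j}$ and $x_{2,j}$ explicitly, and reduces $\delta(\alpha)$ to the minimum of two simple rational functions of $\ell_2/\ell_1$; maximising over this ratio gives the quadratic. For the lower bound, one writes down an explicit family of walks whose turning-point ratios approximate the irrational optimum and computes the recursion exactly. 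The cases $c=5,6$ follow the same template with more turning points and a longer case distinction. Your speculative remark that the theorem stops at $c=6$ because of ``higher-degree polynomials'' is not what actually happens: the obstruction is simply that the number of cases in the turning-point analysis grows, not that the algebra changes character.
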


Our last result bounds the asymptotic growth of the constant $\delta(c)$ from Theorem~\ref{thm:kstar-Pell-Pc}.

\begin{theorem} \label{thm:delta-c-bounds}
As a function of $c$, the constant $\delta(c)$ from Theorem~\ref{thm:kstar-Pell-Pc} satisfies
\begin{equation*}
  \Theta(c^{1.05}) \leq \delta(c) \leq \Theta(c^{1.59}) \enspace.
% 1.05 = \log_5(\delta(5)) = \log_5(1/2(\sqrt{24}+6))
% 1.59 = \log_2(3)
\end{equation*}
\end{theorem}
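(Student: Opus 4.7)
The plan is to establish both bounds via recursions relating $\delta(c)$ to $\delta(c')$ for smaller $c'$, working throughout with Builder and Painter strategies on trees of bounded size in the $(P_\ell,P_c)$-avoidance game (which is equivalent to the density-restricted game by the discussion preceding Corollary~\ref{cor:main-2-forests}).

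For the upper bound, I would design a recursive Builder strategy yielding an inequality of the form $\delta(2c) \leq 3\delta(c) + O(c)$, which iterates to $\delta(c) = O(c^{\log_2 3}) = O(c^{1.585})$, comfortably inside $O(c^{1.59})$. Given a Builder strategy $\mathcal{B}_c$ that forces Painter into a monochromatic $P_c$ in color~$2$ within tree size $\delta(c)\cdot\ell$, I glue three copies of $\mathcal{B}_c$-induced gadgets through a constant number of shared connector vertices, arranged so that each connector lies on long color-$1$ threats whenever Painter colored it with $1$. Forcing color~$2$ on each connector then concatenates the three resulting $P_c$'s into a $P_{2c}$ in color~$2$. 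The main combinatorial content is propagating the color-$1$ threats through the gluing and tracking how the overall tree-size budget splits between the three gadgets and the connectors, absorbing the $o(1)$ term in the definition of $\delta$.

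For the lower bound, I would design a recursive Painter strategy that bootstraps the small-$c$ gains established in Theorem~\ref{thm:delta-1to6}. Fix a base value $c_0$ for which $\delta(c_0)>c_0$; for instance $c_0=5$ gives $\delta(5)=(\sqrt{24}+6)/2\approx 5.449$ and hence $\delta(5)/5 > 1.089$. Let $\mathcal{P}_{c_0}$ be an optimal Painter strategy for target $(P_\ell,P_{c_0})$. Painter simulates $\mathcal{P}_{c_0}$ hierarchically at nested levels of a recursion, treating each monochromatic color-$2$ segment produced at an inner level as an atomic ``super-edge'' to be handled by the outer level. A careful accounting should yield the product-type inequality $\delta(c_0 c) \geq \delta(c_0)\cdot\delta(c)$ (modulo $o(1)$), which iterates on $c = c_0^k$ to $\delta(c)\geq c^{\log_{c_0}\delta(c_0)}$; with $c_0=5$ this gives an exponent exceeding $1.053$, and monotonicity of $\delta$ handles arbitrary $c$.

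The main obstacle will be the lower-bound analysis. In the upper bound, Builder fully controls how gadgets are glued, so the argument reduces to largely deterministic bookkeeping once a good base gadget has been chosen. In contrast, the recursive Painter strategy must defend against an adversarial Builder who need not respect the recursion's natural breakpoints; ensuring that the claimed multiplicative gain $\delta(c_0)/c_0$ at each level is not eroded at the interfaces between levels, where inner subtrees acting as color-$2$ super-edges must remain consistent with outer-level color constraints, requires a delicate invariant. Formulating and propagating this invariant is the central technical challenge and is what ultimately pins down the exponent~$1.05$.
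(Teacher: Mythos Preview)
Your targeted inequalities are exactly right and match the paper: the upper bound comes from an inequality of the shape $\delta(c)\leq 3\,\delta(\lceil (c-1)/2\rceil)$, and the lower bound from a bootstrapping inequality $\delta(c_0^{t})\geq \delta(c_0)^{t}$ for a small base $c_0$ (the paper uses $c_0=4$; your $c_0=5$ would give a marginally larger exponent). However, the route you propose is genuinely different from the paper's, and your upper-bound sketch contains a real mismatch.

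\textbf{Route.} The paper does not argue with direct Builder or Painter constructions at all. It first establishes the exact characterization $k^*(P_\ell,P_c)=\max_{\alpha}k(\alpha)$ via the lattice-walk recursion (Proposition~\ref{prop:kstar-P1-Pr}), together with the periodicity lemma (Lemma~\ref{lemma:periodicity}) showing that the asymptotic slope equals $\delta(\alpha)=\min_j (x_{1,j}+\beta(\alpha))/(j+1)$. Both bounds for $\delta(c)$ are then obtained by purely algebraic manipulation of this recursion: for the upper bound one plugs $j_1=\lfloor(c-1)/2\rfloor$, $j_2=\lceil(c-1)/2\rceil$ into the minimum defining $\beta(\alpha)$ and chooses a specific index $j$ in the minimum defining $\delta(\alpha)$, yielding $\delta(c)\leq 3\,\delta(\lceil(c-1)/2\rceil)$ in a few lines; for the lower bound one writes down explicit self-similar lattice walks and lower-bounds the recursion directly using Lemma~\ref{lemma:periodicity}. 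This reduction to algebra is what buys the paper its cleanliness; your direct strategy arguments would have to redo parts of that reduction inside each inductive step.

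\textbf{Gap in the upper-bound sketch.} Your description ``concatenates the three resulting $P_c$'s into a $P_{2c}$'' cannot be right arithmetically (three $P_c$'s concatenate to a $P_{3c-2}$). The correct picture behind the factor $3$ is $2+1$, not $3\times 1$: two trees each containing a $P_c$ in color~$2$, plus one tree containing a long color-$1$ path, are joined at a single new vertex; the color-$1$ tree forces Painter to use color~$2$ there, producing a $P_{2c}$ in color~$2$. In game terms this is subtle because the color-$1$ threat tree is not ``the same gadget'' as the color-$2$ gadgets, and its size depends on how far Painter was willing to extend color-$1$ paths before switching --- precisely the parameter $\ell'$ that the paper tracks via the lattice-walk position. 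Turning your sketch into a correct direct argument would require reproducing that dependency, which is essentially what Proposition~\ref{prop:kstar-P1-Pr} packages.

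\textbf{Lower bound.} You correctly identify the obstacle: an adversarial Builder need not respect the nesting levels. The paper sidesteps this entirely by working with strategy \emph{sequences} (lattice walks) rather than strategies, and by using Lemma~\ref{lemma:periodicity} to convert the recursive structure into explicit lower bounds on the $x_{1,j}$ and $x_{2,j}$ values. The invariant you are looking for is exactly the one encoded in that lemma; without it (or an equivalent), the hierarchical-simulation argument remains a plan rather than a proof.
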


Note that the upper bound in Theorem~\ref{thm:kstar-Pell-2} follows immediately by combining Theorem~\ref{thm:kstar-Pell-Pc}  with the upper bound on $\delta(c)$ stated in Theorem~\ref{thm:delta-c-bounds}, using the non-negativity of the $o(1)$ term in Theorem~\ref{thm:kstar-Pell-Pc}.

\subsection{About the proofs} \label{sec:main-ideas}

We conclude this introduction by highlighting some of the key features in our proofs in an informal way.

As it turns out, the family of all `reasonable' Painter strategies in the $P_\ell$-avoidance game with $r=2$ colors is in one-to-one correspondence with monotone walks from $(1,1)$ to $(\ell,\ell)$ in the integer lattice $\mathbb{Z}^2$. Such a walk is interpreted as follows: If the walk goes from $(x,y)$ to $(x+1,y)$, Painter will use color~$1$ when faced with the decision of either creating a $P_x$ in color~$1$ or a $P_y$ in color~$2$. Conversely, a step from $(x,y)$ to $(x,y+1)$ indicates that Painter uses color~$2$ in the same situation. (The greedy strategy corresponds to the walk that goes from $(1,1)$ first to $(1,\ell)$ and then to $(\ell,\ell)$.)
Note that there are $\binom{2(\ell-1)}{\ell-1}=4^{(1+o(1))\ell}$ such walks, and thus the same number of `candidate strategies' for Painter.

For any fixed such walk, we can compute the smallest tree size restriction that allows Builder to enforce a monochromatic copy of $P_\ell$ against this particular Painter strategy by a recursive computation along the walk. This recursion involves only integers and no complicated tree structures. We can then compute the parameter $k^*(P_\ell,2)$ by performing this recursive computation for all (exponentially many) walks of the described form, and taking the maximum. (This entire procedure can be seen as a highly specialized form of the general algorithm for computing $m^*(F,r)$ given in~\cite{mrs11}.)
With these insights in hand, understanding the vertex-coloring path-avoidance game reduces to the algebraic problem of understanding this recursion along lattice walks.

The lattice walks (i.e.\ Painter strategies) yielding the lower bounds in Theorem~\ref{thm:kstar-Pell-2} and Theorem~\ref{thm:delta-c-bounds} have an interesting self-similar structure: essentially, they are obtained by nesting a large number of copies of a nearly-optimal walk for the asymmetric $(P_\ell,P_4)$-avoidance game at different scales into each other (see Figure~\ref{fig:bootstrapping} below).

\subsection{Organization of this paper}

In Section~\ref{sec:observations} we collect a few general observations about the $F$-avoidance game for the case where $F$ is a forest. In Section~\ref{sec:recursion} we turn to the case of paths and present the recursion that allows us to compute the parameter $k^*(P_\ell,2)$ (or more generally, the parameter $k^*(P_\ell,P_c)$).
This recursion is analyzed in Section~\ref{sec:analyze-recursion} to derive Theorems~\ref{thm:kstar-Pell-2}--\ref{thm:delta-c-bounds}.

\section{Basic observations} \label{sec:observations}

For our proofs we will consider the general asymmetric $(F_1,\ldots,F_r)$-avoidance game, where Painter's goal is to avoid a (possibly different) forest $F_s$ in each color $s\in[r]$. We denote by $k^*(F_1,\ldots,F_r)$ the smallest integer $k$ for which Builder has a winning strategy in this asymmetric $(F_1,\ldots,F_r)$-avoidance game with tree size restriction $k$.

In this section we prove straightforward lower and upper bounds for this parameter (Lemma~\ref{lemma:greedy-lb} and Lemma~\ref{lemma:kstar-F1-F2-ub} below). These lemmas show that the constant $\delta(c)$ from Theorem~\ref{thm:kstar-Pell-Pc} satisfies $\delta(c)=c$ for $c\in\{1,2,3\}$, and their proofs also serve as a warm-up for the reader to get familiar with the type of reasoning that is used throughout the paper.

The definition of the greedy strategy extends straightforwardly to the general asymmetric $(F_1,\ldots,F_r)$-avoidance game: This strategy in each step uses the highest-numbered color $s\in[r]$ that does not complete a monochromatic copy of $F_s$ (or color $1$ if no such color exists).

\begin{lemma}[Greedy lower bound] \label{lemma:greedy-lb}
For any trees $F_1,\ldots,F_r$, we have $k^*(F_1,\ldots,F_r)\geq v(F_1)\cdots v(F_r)$.
\end{lemma}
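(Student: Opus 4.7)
The plan is to prove the lemma by showing that Painter's greedy strategy wins whenever the tree size restriction is $k \leq v(F_1)\cdots v(F_r) - 1$. I would argue by contradiction and consider the first step at which greedy fails: a new vertex $v$ is presented for which every color $s \in [r]$ would complete a monochromatic $F_s$. Writing $T$ for the tree containing $v$ and $C_s$ for $v$'s would-be color-$s$ component, we have $C_s \supseteq F_s$, so $|C_s| \geq v(F_s)$, and the $C_s$ for distinct $s$ intersect only at $v$. The goal is to derive $|T| \geq v(F_1)\cdots v(F_r)$, contradicting the tree size restriction.

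The core step is a multiplicative subtree bound. Root $T$ at $v$ (the last-added vertex), and set $P_s := \prod_{s' > s} v(F_{s'})$; for any vertex $w$ of color $c(w)$ let $D_{c(w)}(w)$ denote its color-$c(w)$ component inside its rooted subtree $S_w$. By induction on the depth of $w$ I would establish the bound
\[
  |S_w| \;\geq\; |D_{c(w)}(w)|\cdot P_{c(w)}.
\]
The inductive step uses that greedy's choice of color $c(w)$ at $w$'s creation time guarantees, via monotonicity of monochromatic components, that $w$'s would-be color-$s'$ component contains $F_{s'}$ for every $s' > c(w)$. Partitioning $S_u$ for each direct child $u$ of $v$ into \emph{halos} $H(w)$---where $H(w)$ consists of $w$ together with the subtrees hanging off $w$ whose roots have color different from $c(u)$, indexed by $w \in D_{c(u)}(u)$---then applying the inductive claim one level deeper, together with the telescoping identity $\sum_{s' > c(u)}(v(F_{s'})-1)P_{s'} = P_{c(u)}-1$, yields $|H(w)| \geq P_{c(u)}$ and hence the claim. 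Summing over $v$'s direct children and using $\sum_{u \in N(v),\, c(u)=s} |D_s(u)| = |C_s|-1 \geq v(F_s)-1$, a second telescoping gives
\[
  |T| \;\geq\; 1 + \sum_{s=1}^{r} (v(F_s)-1)P_s \;=\; \prod_{s=1}^{r} v(F_s),
\]
which is the desired contradiction.

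The main obstacle is to make the subtree bound rigorous at arbitrary depth: a priori, for a deep vertex $w$, its at-creation color-$s'$ neighbors could include $w$'s rooted-at-$v$ parent, so $w$'s would-be color-$s'$ component might extend outside $S_w$. The resolution is that the recursion only visits vertices along root-to-leaf paths of strictly increasing colors, so the parent of $w$ in this recursion has color strictly smaller than $c(w)$ and cannot serve as a color-$s'$ at-creation neighbor for any $s' > c(w)$. Child-subtrees hanging off $w$ whose roots have color strictly smaller than $c(w)$, where this parental issue could in principle occur, are absorbed by the trivial bound $|S_{u'}| \geq 1$, which is enough for the telescoping argument to go through.
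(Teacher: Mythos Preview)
Your argument is correct and rests on the same key observation as the paper's proof --- that whenever greedy assigns color $s$ to a vertex $w$, then for every $s'>s$ the creation-time colour-$s'$ neighbourhood of $w$ together with $w$ contains a copy of $F_{s'}$ --- but the organization is genuinely different.

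The paper argues globally and by colour rather than by depth: starting from the copy of $F_1$ in colour~$1$ that appears when greedy loses, it attaches to each of its vertices the surrounding $F_2$-structure in colour~$2$, obtaining a connected subtree on $v(F_1)v(F_2)$ vertices in colours $\{1,2\}$; then to each vertex of \emph{that} tree the surrounding $F_3$-structure in colour~$3$; and so on. Disjointness of the newly attached pieces is immediate because the board is a forest, and no rooting is ever introduced --- so the ``parental issue'' you identify simply does not arise. This yields a two-paragraph proof.

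Your approach instead roots at the failing vertex and proves the multiplicative subtree bound $|S_w|\geq |D_{c(w)}(w)|\cdot P_{c(w)}$ via a halo decomposition. Two small remarks: (i)~the colours along a root-to-leaf path in your recursion are only \emph{weakly} increasing (they are constant within each $D_{c(u)}(u)$ and jump at the transitions), though this is still enough to guarantee that the parent's colour is at most $c(w)$, which is what you actually need; (ii)~the bound $|S_w|\geq |D_{c(w)}(w)|\cdot P_{c(w)}$ is false for arbitrary $w$ (e.g.\ a leaf whose parent has colour $r$ may receive colour $r-1$), so the parental hypothesis is essential and the inductive statement should be phrased with it built in. The paper's global expansion is shorter; your local recursion is more explicit about the structure and would adapt more readily if one wanted finer per-subtree estimates.
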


\begin{proof}
We show that the greedy strategy is a winning strategy for Painter in the game with tree size restriction $v(F_1)\cdots v(F_r)-1$. Suppose for the sake of contradiction that Painter loses this game when playing the greedy strategy. Then, by the definition of the strategy, the board contains a copy of $F_1$ in color~1. Moreover, each vertex $v$ in color~1 in this copy is adjacent to a set of trees in color~2 which together with $v$ form a copy of $F_2$, so the board contains a tree on $v(F_1)\cdot v(F_2)$ vertices in the colors~1 or~2. Continuing this argument inductively, we obtain that for all $k=2,\ldots,r$ each vertex $v$ in one of the colors $\{1,\ldots,k-1\}$ is adjacent to a set of trees in color~$k$ which together with $v$ form a copy of $F_k$, and that consequently  the board contains a tree on $v(F_1)\cdots v(F_k)$ vertices in colors from $\{1,\ldots,k\}$. For $k=r$ this yields the desired contradiction.
\end{proof}

Observe that if Builder confronts Painter several times with the decision on how to color a new vertex that connects in the same way to copies of the same $r$-colored trees, then by the pigeonhole principle, Painter's decision will be the same in at least a $(1/r)$-fraction of the cases. As a consequence, we can assume \wolog that Painter plays \emph{consistently} in the sense that her strategy is determined by a function that maps unordered tuples of $r$-colored rooted trees to the set of available colors $\{1,\ldots,r\}$ (with the obvious interpretation that Painter uses the corresponding color whenever a new vertex connects exactly to the roots of copies of the trees in such a tuple). This assumption is very useful when proving upper bounds for $k^*(F_1,\ldots,F_r)$ by describing explicit strategies for Builder, as it implies that if Builder has enforced a copy of some tree on the board, then he can enforce as many additional copies of this tree as he needs. We thus avoid the hassle of making the repetitive pigeonholing steps for Builder explicit.

For the following lemma recall that we denote by $S_\ell$ the star with $\ell$ rays.

\begin{lemma}[Tree versus star upper bound] \label{lemma:kstar-F1-F2-ub}
For any tree $F$ and any $\ell\geq 1$ we have $k^*(F,S_\ell)\leq v(F)\cdot v(S_\ell)=v(F)\cdot(\ell+1)$.
\end{lemma}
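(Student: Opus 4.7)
The proof plan is to construct an explicit Builder strategy that wins within the tree-size bound $v(F)(\ell+1)$. By the consistency discussion preceding the lemma, I may treat Painter's strategy as a function of the local structure at the time of presentation, so in particular all isolated vertices receive the same color. Throughout, I will use the following \emph{force-color-1 principle}: any newly presented vertex with $\ell$ color-2 neighbors must itself be colored 1, since otherwise Painter immediately creates a monochromatic $S_\ell$ in color 2 centered there.

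First I would order the vertices of $F$ as $v_1,\ldots,v_f$ with $f=v(F)$, such that each $v_i$ for $i\ge 2$ has a unique parent $v_{p(i)}\in\{v_1,\ldots,v_{i-1}\}$. Builder probes the color $c_0\in\{1,2\}$ assigned to an isolated vertex. If $c_0=2$, Builder applies the following \textbf{star gadget strategy}: for $i=1,\ldots,f$, present $\ell$ fresh isolated vertices (all colored 2 by consistency) followed by a new vertex $u_i$ attached to those $\ell$ vertices and also to $u_{p(i)}$ when $i\ge 2$. Each $u_i$ then has $\ell$ color-2 neighbors, so the force-color-1 principle forces $u_i$ to be colored 1, and the vertices $u_1,\ldots,u_f$ form a copy of $F$ in color 1. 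The resulting tree has precisely $f(\ell+1)$ vertices, matching the bound.

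If $c_0=1$, the gadget strategy does not directly force $u_i$ to color 1, but isolated vertices now come for free in color 1, so Builder instead tries to grow $F$ directly, attaching each new $v_i$ only to its parent $u_{p(i)}$. If Painter obliges and colors every $u_i$ with color 1, Builder wins with tree size $f\le f(\ell+1)$. Otherwise, let $k$ be the least index at which $u_k$ is colored 2. By consistency, Builder can replay this prefix in arbitrarily many disjoint components and produce on demand any number of color-2 vertices of this specific local type. Builder then joins $\ell$ such color-2 vertices through a single new vertex $z$: either the continuation colorings complete an $S_\ell$ in color 2 (at one of the color-2 replay endpoints), or $z$ is forced to color 1 by the force-color-1 principle, and $z$ serves as a fresh seed from which to continue the $F$-construction by the same star-gadget step, using the color-2 replay endpoints in place of the color-2 isolated leaves of the $c_0=2$ case.

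The main obstacle will be verifying that the tree-size budget $f(\ell+1)$ is respected in the case $c_0=1$, because a naive accounting of the replay phase pays $k$ additional vertices per replayed prefix and overshoots. The resolution I expect is to observe that the color-1 vertices of each replayed prefix $F[v_1,\ldots,v_{k-1}]$ already contribute to the copy of $F$ under construction, so the replay cost is amortized against the contribution of these vertices to the final $F$; together with sharing of common ancestor vertices through consistency and a short direct calculation comparing $k+\ell$ to $\ell+1$ per inductive round, the total tree size stays within $f(\ell+1)$.
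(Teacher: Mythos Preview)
Your $c_0=2$ case is correct and gives exactly the claimed bound. The $c_0=1$ case, however, has a real gap that the amortization sketch does not close. Once you have a $k$-vertex replay tree $T'$ carrying a single color-2 vertex, substituting $\ell$ copies of $T'$ for the $\ell$ isolated leaves at each of the $f=v(F)$ gadget centres gives a final tree of size $f(1+\ell k)$, which overshoots whenever $k\ge 2$. The proposed fix---that the $k-1$ color-1 vertices of each replayed prefix ``contribute to the copy of $F$ under construction''---does not hold: those copies of $F[v_1,\ldots,v_{k-1}]$ dangle off the main component only through their color-2 endpoints, so they cannot serve as pieces of the monochromatic $F$ you are assembling. (Try $F=P_4$, $\ell=1$, and suppose $k=2$: your construction produces a $12$-vertex tree against a budget of~$8$.) If instead you extend the original color-1 prefix one vertex at a time, forcing only when Painter balks, then each new forcing step must replay the \emph{entire current component}, and the size blows up geometrically.

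The idea you are missing is Lemma~\ref{lemma:tree-splitting}. In the paper's proof, Builder first enforces an all-color-1 copy of \emph{every} tree on up to $s-1$ vertices (not just a single $F$-prefix), where $s$ is the smallest size at which Painter ever uses color~2; this yields one $s$-vertex tree $T$ with a color-2 vertex. He then takes a separator $S\subseteq V(F)$ with $|S|\le\lfloor f/s\rfloor$ such that every component of $F\setminus S$ has fewer than $s$ vertices, assembles those components from his Phase-1 stock, and applies the $\ell$-fold forcing gadget only at the $|S|$ separator vertices. The total size is then $f+\ell\,|S|\,s\le f+\ell\lfloor f/s\rfloor s\le f(\ell+1)$: the decisive point is that only $\lfloor f/s\rfloor$ vertices, not all $f$, require the expensive gadget.
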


Note that this bound matches the greedy lower bound given by the previous lemma. It follows in particular that $k^*(P_\ell, P_c)=c\cdot \ell$ for any $\ell\geq 1$ and $c\in\{1,2,3\}$.

For the proof of Lemma~\ref{lemma:kstar-F1-F2-ub} we use the following auxiliary lemma (for a proof see e.g.~\cite{Vygen201167}).

\begin{lemma}[Tree splitting] \label{lemma:tree-splitting}
For any tree $F$ and any integer $s\geq 1$ there is a subset $S\seq V(F)$ with $|S|\leq \lfloor\frac{v(F)}{s}\rfloor$ such that when removing the vertices of $S$ from $F$ all remaining components (=trees) have at most $s-1$ vertices.
\end{lemma}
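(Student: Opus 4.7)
The plan is to prove the lemma by a constructive bottom-up greedy algorithm. Root $F$ at an arbitrary vertex and process the vertices in post-order (so that all descendants of $v$ are processed before $v$ itself). As we go, we build up the set $S$ and compute, for each vertex $v$, a value $\sigma(v)$ recording the current size of the would-be component of $v$ in $F-S$ restricted to $v$ and its descendants. Formally, when processing $v$ with children $c_1,\ldots,c_t$ (already processed), set
\begin{equation*}
  \sigma(v) := 1 + \sum_{\substack{i\in[t] \\ c_i\notin S}} \sigma(c_i) \enspace,
\end{equation*}
and then add $v$ to $S$ if and only if $\sigma(v)\geq s$.

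The correctness argument has two parts. First, for the component condition: if $v\notin S$ and $v$'s parent is in $S$ (or $v$ is the root), then the component of $v$ in $F-S$ is exactly the set $A(v)$ of vertices counted by $\sigma(v)$, since any descendant $u$ of $v$ is reachable from $v$ in $F-S$ precisely when the entire $v$-to-$u$ path avoids $S$, which is exactly the condition the recursion captures. By the rule of the algorithm, $v\notin S$ means $\sigma(v)=|A(v)|\leq s-1$, so every component of $F-S$ has at most $s-1$ vertices.

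Second, for the bound on $|S|$: the sets $A(v)$ for $v\in S$ are pairwise disjoint subsets of $V(F)$, each of size at least $s$. Disjointness follows from the post-order structure: if $v,w\in S$ lie in incomparable subtrees they are trivially disjoint, and if $w$ is a proper descendant of $v$, then along the $v$-to-$w$ path some vertex (namely $w$ itself, or an intermediate vertex in $S$) blocks the recursion from including $w$ or its descendants when $A(v)$ is formed. Size at least $s$ is immediate from the triggering condition $\sigma(v)\geq s$. Summing over $v\in S$ gives $|S|\cdot s\leq v(F)$, hence $|S|\leq \lfloor v(F)/s\rfloor$.

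The one place that requires a little care is the disjointness argument, since one has to verify that no vertex of $A(w)$ can sneak back into $A(v)$ for an ancestor $v$ of $w$; but this is handled cleanly by the fact that the recursion for $\sigma(v)$ excludes contributions from children already placed in $S$, so the whole subtree below any $S$-vertex is "cut off" from all its ancestors. Everything else is just bookkeeping, and the whole proof fits comfortably in a paragraph once the algorithm is specified.
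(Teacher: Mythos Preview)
Your proof is correct. The rooted post-order sweep with the threshold rule $\sigma(v)\geq s$ is exactly the right idea, and both halves of the argument go through: the component bound follows because every component of $F-S$ has a unique topmost vertex $v\notin S$ whose parent (if any) lies in $S$, and that component is precisely $A(v)$ with $|A(v)|=\sigma(v)\leq s-1$; the size bound follows from the disjointness of the sets $A(v)$, $v\in S$, each of size $\sigma(v)\geq s$. Your disjointness argument is also sound once one observes that $u\in A(v)$ if and only if every vertex on the $v$--$u$ path other than $v$ itself avoids $S$; hence if $w\in S$ is a proper descendant of $v\in S$, then $w$ blocks all of $A(w)$ from entering $A(v)$.

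As for comparison with the paper: the paper does not actually give a proof of this lemma but simply cites an external reference (Vygen). So there is nothing to compare against directly. Your argument is the standard one for this folklore result and is entirely self-contained, which is arguably preferable to an external citation in a paper that otherwise needs nothing from that reference.
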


\begin{proof}[Proof of Lemma~\ref{lemma:kstar-F1-F2-ub}]
We describe a winning strategy for Builder in the $(F,S_\ell)$-avoidance game with tree size restriction $v(F)\cdot v(S_\ell)$. We may and will assume \wolog that Painter plays consistently as defined above, implying that if Builder has enforced a copy of some tree on the board, then he can enforce as many additional copies of this tree as he needs.

Builder's strategy works in two phases. The first phase lasts as long as Painter continues using color~1, and ends when she uses color~2 for the first time. In the first phase, for $n=1,2,\ldots$ Builder enforces copies of \emph{all} trees with exactly $n$ vertices in color~1 (first all trees with one vertex, then all trees with two vertices and so on; all those copies are isolated, i.e., they are not connected to other parts of the board). Let $s$ denote the value of $n$ when Painter uses color~2 for the first time. At this point Builder has enforced, for each $n\leq s-1$, a copy of every tree on $n$ vertices in color~1, and a single vertex in color~2 that is contained in a tree $T$ with $v(T)=s$ vertices.

For the second phase, apply Lemma~\ref{lemma:tree-splitting} and fix a subset $S\seq V(F)$ with $|S|\leq \lfloor\frac{v(F)}{s}\rfloor$ such that when removing the vertices of $S$ from $F$ all remaining components (=trees) have at most $s-1$ vertices. In this phase Builder uses copies of the components in $F\setminus S$ in color~1 from the first phase and connects them with $|S|$ many new vertices in such a way that assigning color~1 to all of these new vertices would create a copy of $F$ in color~1.
At the same time, Builder also connects each of these new vertices to the vertex in color~2 of $\ell$ separate copies of $T$, such that assigning color 2 to any of the new vertices would create a copy of $S_\ell$ in color~2. (In total Builder uses $\ell\cdot|S|$ many copies of $T$.) Hence the game ends either with a copy of $F$ in color~1 or a copy of $S_\ell$ in color~2, and the number of vertices of the largest component (=tree) Builder constructs during the game is
\begin{equation*}
  v(F)+\ell\cdot|S|\cdot v(T) \leq v(F)+\ell\cdot \Big\lfloor\frac{v(F)}{s}\Big\rfloor\cdot s \leq v(F)\cdot (\ell+1)=v(F)\cdot v(S_\ell) \enspace,
\end{equation*}
proving the lemma.
\end{proof}

\section{A general recursion} \label{sec:recursion}

In this section we derive a general recursion that allows us to compute the parameter $k^*(P_{\ell_1},\ldots,P_{\ell_r})$  for arbitrary values $\ell_1, \ldots, \ell_r\geq 1$ (see Proposition~\ref{prop:kstar-P1-Pr} below). This turns the problem of analyzing the $(P_{\ell_1},\ldots,P_{\ell_r})$-avoidance game into the algebraic problem of analyzing this recursion.
As innocent as this recursion may look, it generates surprisingly complex patterns, which surface only for relatively large values of $\ell_1,\ldots,\ell_r$ (recall Table~\ref{tab:kstar-Pell} for the special case $r=2$, $\ell_1=\ell_2=\ell$). Understanding the asymptotic features of this recursion will be the key to proving Theorems~\ref{thm:kstar-Pell-2}--\ref{thm:delta-c-bounds}.

Throughout this section we include the case with more than two colors. There is little overhead for doing so, and it is notationally convenient to distinguish indices $s\in[r]$ referring to colors from certain indices $1$ and $2$ that appear otherwise. 

\subsection{A recursion along lattice walks}
\label{sec:defining-the-recursion}

Let $\alpha=(\alpha_i)_{i\geq 1}$ be an infinite sequence with entries from the set $[r]$. For any $i\geq 0$ and any $s\in[r]$ we define
\begin{subequations} \label{eq:recursion}
\begin{equation} \label{eq:nu-i-s}
  \nu_{i,s}:=1+|\{1\leq j\leq i \mid \alpha_j=s \}| \enspace.
\end{equation}
It is convenient to think of $\alpha$ as an increasing axis-parallel walk in the $r$-dimensional integer lattice $\mathbb{Z}^r$ with starting point $(1,1,\ldots,1)$, where in the $i$-th step of the walk the current position changes by $+1$ in the coordinate direction $\alpha_i$. Note that $\nu_i=(\nu_{i,1},\ldots,\nu_{i,r})$ as defined in \eqref{eq:nu-i-s} denotes the position of the walk after the first $i$ steps.

The recursion defined below is parametrized by such a sequence $\alpha=(\alpha_i)_{i\geq 1}$, $\alpha_i\in[r]$, where this sequence can be interpreted as a strategy for Painter in some $(P_{\ell_1},\ldots,P_{\ell_r})$-avoidance game as follows: For any point $\nu_i$, $i\geq 0$, on the walk corresponding to $\alpha$, whenever the longest path that would be created by assigning color $s$ to a new vertex on the board is $\nu_{i,s}$ for each color $s\in[r]$, Painter chooses color~$\sigma:=\alpha_{i+1}$ (i.e., she prefers completing a path on $\nu_{i,\sigma}$ vertices in color $\sigma$ over the other alternatives). To obtain a fully defined Painter strategy we will extend this criterion using certain natural monotonicity conditions: If e.g.\ Painter prefers a $P_5$ in color~1 over a $P_7$ in color~2, she will also prefer a $P_5$ in color~1 over a $P_8$ in color~2. The precise strategy definition is given below in the proof of Proposition~\ref{prop:kstar-P1-Pr}. The recursion defined in the following evaluates the performance of the strategy corresponding to the given sequence $\alpha$.

For a given sequence $\alpha=(\alpha_i)_{i\geq 1}$, $\alpha_i\in[r]$, the recursion computes an infinite sequence of integers $(k_i)_{i\geq 0}$. As auxiliary variables it maintains sequences of integers $x_1,x_2,\ldots,x_r$, where for each $s\in[r]$ we write $x_s=(x_{s,0},x_{s,1},\ldots)$. (To simplify notation we suppress the dependence of the values $k_i$, of the sequences $x_s$ and of the values $\nu_{i,s}$ defined in \eqref{eq:nu-i-s} from the parameter~$\alpha$.)

For each $i\geq 0$, first $k_i$ is computed, and then this value is appended to exactly one of the sequences $x_1,\ldots,x_r$, namely to the sequence specified by $\alpha_{i+1}$.
Specifically, for each $s\in[r]$ we define
\begin{equation} \label{eq:x-s-0}
  x_{s,0}:=0 \enspace,
\end{equation}
and for any $i\geq 0$ we define
\begin{equation} \label{eq:k-i}
  k_i:=1+\sum_{s\in[r]} \min_{\substack{j_1,j_2\geq 0: \\ j_1+j_2=\nu_{i,s}-1}}(x_{s,j_1}+x_{s,j_2})
\end{equation}
and
\begin{equation} \label{eq:x-sigma}
  x_{s,\nu_{i,s}}:=k_i \qquad \text{if} \;\; \alpha_{i+1}=s \enspace,
\end{equation}
\end{subequations}
where the values $\nu_{i,s}$ are defined in \eqref{eq:nu-i-s} for the given sequence $\alpha$. (One can check that after step $i$ of the recursion exactly the values $k_0, \ldots, k_i$ and, for each $s\in[r]$, the values $x_{s,0},\ldots, x_{s,\nu_{i+1,s}-1}$ have been computed.)
An example illustrating these definitions is given in Figure~\ref{fig:recursion}.

Note that we can think of the sequence $(k_i)_{i\geq 0}$ as being computed along the walk corresponding to $\alpha$, and for each $s\in[r]$ the entries of the sequence $x_s$ are obtained by selecting those values $(k_i)_{i\geq 0}$ where the walk takes a step in direction~$s$ (see Figure~\ref{fig:recursion}). As we shall see, for any $s\in[r]$ and any $j\geq 0$ the number $x_{s,j}$ equals the number of vertices in the smallest component (=tree) containing a path on $j$ vertices in color $s$ if Painter plays according to the strategy corresponding to the sequence $\alpha$ (see Lemma~\ref{lemma:strategy-invariant} below).

The following lemma is an immediate consequence of the definitions in \eqref{eq:recursion}.

\begin{lemma}[Monotonicity along the recursion] \label{lemma:monotonicity}
For any $\alpha=(\alpha_i)_{i\geq 1}$, $\alpha_i\in[r]$, the sequence $(k_i)_{i\geq 0}$ and in particular each of the sequences $x_1,\ldots,x_r$ defined in \eqref{eq:recursion} is strictly increasing.
\end{lemma}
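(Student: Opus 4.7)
My plan is to prove strict monotonicity of $(k_i)_{i\geq 0}$ by induction on $i$ and then derive strict monotonicity of each sequence $x_s$ for free. Indeed, from the update rule \eqref{eq:x-sigma} the sequence $x_s$ consists of the entry $x_{s,0}=0$ followed exactly by the subsequence $(k_i)_{i:\,\alpha_{i+1}=s}$, listed in order. Hence once $(k_i)_{i\geq 0}$ is a strictly increasing sequence of positive integers, the same is automatic for every $x_s$. Positivity of the $k_i$ is immediate from \eqref{eq:k-i} (the leading $+1$, together with $x_{s,j}\geq 0$), so the whole lemma reduces to showing $k_{i+1}>k_i$ for every $i\geq 0$.

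For the base case I would just unwind \eqref{eq:k-i}: at $i=0$ all inner minima are taken over $j_1+j_2=0$ and therefore evaluate to $0$, giving $k_0=1$; at $i=1$ only the color $\sigma=\alpha_1$ has $\nu_{1,\sigma}=2$, contributing $x_{\sigma,0}+x_{\sigma,1}=0+1=1$, and all other summands still vanish, so $k_1=2>k_0$.

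For the inductive step, set $\sigma:=\alpha_{i+1}$ and observe how the $\nu$-coordinates evolve: $\nu_{i+1,\sigma}=\nu_{i,\sigma}+1$, while $\nu_{i+1,s}=\nu_{i,s}$ for every other color. Consequently every summand in the sum defining $k_{i+1}$ except the one indexed by $\sigma$ is literally identical to the corresponding summand in the sum defining $k_i$, and it therefore suffices to establish
\[
\min_{j_1+j_2=\nu_{i,\sigma}}\bigl(x_{\sigma,j_1}+x_{\sigma,j_2}\bigr)\;>\;\min_{j_1+j_2=\nu_{i,\sigma}-1}\bigl(x_{\sigma,j_1}+x_{\sigma,j_2}\bigr).
\]
To see this, I would take any minimizer $(j_1^*,j_2^*)$ of the left-hand side; since $\nu_{i,\sigma}\geq 1$, at least one of the coordinates, say $j_1^*$, is positive, so $(j_1^*-1,j_2^*)$ is a feasible pair on the right-hand side. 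By the inductive hypothesis $(k_0,\ldots,k_i)$ is strictly increasing and positive, and the observation from the first paragraph then implies that the already-defined entries $x_{\sigma,0},\ldots,x_{\sigma,\nu_{i,\sigma}}$ of $x_\sigma$ are strictly increasing as well; in particular $x_{\sigma,j_1^*-1}<x_{\sigma,j_1^*}$, which yields the desired strict inequality and closes the induction.

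I do not anticipate any real obstacle: the argument is a one-step induction, and the only ingredient that requires any care is the shift-the-minimizer step, which is legal precisely because of the bound $\nu_{i,\sigma}\geq 1$ guaranteed by \eqref{eq:nu-i-s}.
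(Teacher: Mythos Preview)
Your argument is correct. The paper does not actually prove this lemma, stating only that it ``is an immediate consequence of the definitions in \eqref{eq:recursion}''; your induction with the shift-the-minimizer step is precisely the natural way to make this immediate consequence explicit.
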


In the following we are only interested in evaluating the above recursion for a finite number of steps. More specifically, for integers $\ell_1,\ldots,\ell_r\geq 1$ we denote by $W(\ell_1,\ldots,\ell_r)$ the set of finite sequences of length
\begin{subequations} \label{eq:d-k-alpha}
\begin{equation} \label{eq:d}
  d=d(\ell_1,\ldots,\ell_r):=\sum_{s\in[r]}(\ell_s-1)
\end{equation}
with the property that for each $s\in[r]$, exactly $\ell_s-1$ entries are equal to $s$ (i.e., the walk corresponding to such a sequence ends at $(\ell_1,\ldots,\ell_r)$, see Figure~\ref{fig:recursion}).
For any such $\alpha\in W(\ell_1,\ldots,\ell_r)$, we may evaluate the recursion \eqref{eq:recursion} for the first $d+1$ steps (i.e., for $i=0,\ldots, d$), and define
\begin{equation} \label{eq:k-alpha}
  k(\alpha):=k_d \enspace.
\end{equation}
\end{subequations}
(In the last step $i=d$, \eqref{eq:x-sigma} should be ignored.)

\begin{figure}
\centering
\PSforPDF{
 \psfrag{col1}{Color 1}
 \psfrag{col2}{Color 2}
 \psfrag{1}{1}
 \psfrag{2}{2}
 \psfrag{3}{3}
 \psfrag{4}{4}
 \psfrag{5}{5}
 \psfrag{6}{6}
 \psfrag{n0}{$\nu_0$}
 \psfrag{n1}{$\nu_1$}
 \psfrag{n2}{$\nu_2$}
 \psfrag{n3}{$\nu_3$}
 \psfrag{n4}{$\nu_4$}
 \psfrag{n5}{$\nu_5$}
 \psfrag{n6}{$\nu_6$}
 \psfrag{n7}{$\nu_7$}
 \psfrag{k0}{\small $k_0=1$}
 \psfrag{k1}{\small $k_1=2$}
 \psfrag{k2}{\small $k_2=3$}
 \psfrag{k3}{\small $k_3=6$}
 \psfrag{k4}{\small $k_4=9$}
 \psfrag{k5}{\small $k_5=10$}
 \psfrag{k6}{\small $k_6=11$}
 \psfrag{k71}{\small $k_7=1+\min(0+11,1+10,2+9)$}
 \psfrag{k72}{\small ${}+\min(0+6,3+3)=18$}
 \psfrag{kd}{\small $k_d$}
 \psfrag{alpha}{$\alpha=(\alpha_1,\alpha_2,\ldots)=(1,1,2,2,1,1,1,2,\ldots)$}
 \psfrag{x1}{$x_1=(0,k_0,\quad\;\;\,k_1,\quad\;\;\,k_4,\quad\;\;\,k_5,\quad\;\;\,k_6,\quad\;\;\,\ldots\;\;)$}
 \psfrag{x2}[l][c][1][90]{$x_2=(0,k_2,\quad\;\;\,k_3,\quad\;\;\,k_7,\quad\;\;\,\ldots\;\;)$}
 \psfrag{l1}{$\ell_1$}
 \psfrag{l2}{$\ell_2$}
 \includegraphics{graphics/path-recursion.eps}
}
\caption{Illustration of the definitions in \eqref{eq:recursion} and \eqref{eq:d-k-alpha} for the case $r=2$.} \label{fig:recursion}
\end{figure}

The following proposition is the main result of this section and characterizes the parameter $k^*(P_{\ell_1},\ldots,P_{\ell_r})$ from the $(P_{\ell_1},\ldots,P_{\ell_r})$-avoidance game in terms of the recursion defined above.

\begin{proposition}[General recursion] \label{prop:kstar-P1-Pr}
For any integers $\ell_1,\ldots,\ell_r\geq 1$, we have
\begin{equation} \label{eq:kstar-P1-Pr}
  k^*(P_{\ell_1},\ldots,P_{\ell_r})=\max_{\alpha\in W(\ell_1,\ldots,\ell_r)} k(\alpha) \enspace,
\end{equation}
where $k(\alpha)$ is defined in \eqref{eq:recursion} and \eqref{eq:d-k-alpha}.
\end{proposition}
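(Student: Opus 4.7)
The proposition will be established by the two matching inequalities in~\eqref{eq:kstar-P1-Pr}. Both directions rest on the following interpretation of the recursion~\eqref{eq:recursion}: each walk $\alpha \in W(\ell_1,\ldots,\ell_r)$ encodes a canonical Painter strategy $\pi_\alpha$; the quantity $x_{s,j}$ represents the minimum size of a tree that Builder can enforce containing a path on $j$ vertices in color~$s$ ending at a designated root; and $k_i$ is the size of the component Builder produces at step~$i$ of the walk. We invoke throughout the consistency simplification from Section~\ref{sec:observations}, which allows us to treat Painter's strategy as a function of the rooted colored trees attached to the new vertex and to reuse any already-enforced configuration arbitrarily often.

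For the \textbf{lower bound} $k^*(P_{\ell_1},\ldots,P_{\ell_r}) \ge \max_\alpha k(\alpha)$, we fix $\alpha \in W(\ell_1,\ldots,\ell_r)$ and define $\pi_\alpha$ by the rule: when a new vertex $v$ is presented and coloring $v$ with $s \in [r]$ would create a longest $s$-colored path of $j_s$ vertices, let $i^*$ be the smallest $i \in \{0,\ldots,d\}$ with $(j_1,\ldots,j_r) \le \nu_i$ componentwise and $j_{\alpha_{i+1}} \le \ell_{\alpha_{i+1}}-1$; Painter then colors $v$ with $\sigma := \alpha_{i^*+1}$. Geometrically, this follows the walk and adopts the color dictated by the first lattice point dominating the current configuration. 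We then prove by induction on the course of the game the \emph{component-size invariant}: whenever a vertex $w$ on the board has color~$s$ and the longest $s$-colored path through $w$ has $j$ vertices, the component containing $w$ has at least $x_{s,j}$ vertices. The inductive step exploits the forest structure of the board (cycles are forbidden by the density restriction) to decompose the component of $w$ into, for each color $s' \in [r]$, two $s'$-colored ``arms'' attached to $w$ of lengths $(j_1',j_2')$ with $j_1' + j_2' \le \nu_{i^*,s'}-1$, and then applies the inductive hypothesis to each arm and sums over $s' \in [r]$, matching the double inner minimum in~\eqref{eq:k-i}. If Painter were to lose with tree size restriction $k(\alpha)-1$, the losing configuration would force $i^* = d$, and the invariant would yield a component of size $k_d = k(\alpha)$, a contradiction.

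For the \textbf{upper bound} $k^*(P_{\ell_1},\ldots,P_{\ell_r}) \le \max_\alpha k(\alpha)$, we describe a Builder strategy winning against any consistent Painter strategy using a tree of size $\max_\alpha k(\alpha)$, in parallel to an adaptively constructed walk $\alpha^* \in W(\ell_1,\ldots,\ell_r)$. Inductively, before step~$i$ Builder has enforced, for each $s \in [r]$ and each $j \le \nu_{i,s}-1$, arbitrarily many copies of a rooted tree of size $x_{s,j}$ containing a $P_j$ in color~$s$ ending at the root. At step~$i$ he picks, for each $s$, a split $(j_1^*,j_2^*)$ minimizing $x_{s,j_1^*} + x_{s,j_2^*}$ subject to $j_1^* + j_2^* = \nu_{i,s}-1$, takes one fresh copy of each of the $2r$ resulting trees, and presents a new vertex $v$ joined to all their roots; the resulting component has exactly $k_i$ vertices, and for each $s$ coloring $v$ with $s$ would create a $P_{\nu_{i,s}}$ in color~$s$. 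Builder records Painter's response as $\alpha^*_{i+1} := \sigma$ and adopts the new tree as the realization of $x_{\sigma,\nu_{i,\sigma}}$. At step $i = d$ the state is $\nu_d = (\ell_1,\ldots,\ell_r)$, so every color is a losing move for Painter; Builder wins using a tree of size $k(\alpha^*) \le \max_\alpha k(\alpha)$.

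The \textbf{main obstacle} will be the component-size invariant in the lower bound: the strategy $\pi_\alpha$ must be defined on all possible configurations (not only those lying on the walk) via the monotonicity-based extension above, and the inductive accounting needs to be tight enough to recover $k_{i^*}$ exactly. The reason~\eqref{eq:k-i} sums over \emph{all} colors (and not just over $\sigma$) is precisely that the component of $w$ includes the subtrees Builder attached in each color $s' \ne \sigma$ to constrain Painter's choice; the strict monotonicity from Lemma~\ref{lemma:monotonicity} guarantees that these previously constructed trees are the smallest available and thereby match the inner minima in~\eqref{eq:k-i}.
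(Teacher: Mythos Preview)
Your upper-bound sketch is essentially the paper's Builder strategy. One small omission: you assume the game always reaches step $i=d$, but a (possibly foolish) Painter may use a color $\sigma$ with $\nu_{i,\sigma}=\ell_\sigma$ and lose at some earlier step $d'<d$; then your $\alpha^*$ has fewer than $d$ entries and is not in $W(\ell_1,\ldots,\ell_r)$. The paper fixes this by extending the partial walk arbitrarily to some $\alpha\in W(\ell_1,\ldots,\ell_r)$ and invoking the monotonicity of Lemma~\ref{lemma:monotonicity} to get $k_{d'}\le k_d=k(\alpha)\le\max_\alpha k(\alpha)$.

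The lower bound has a genuine gap: your rule for $\pi_\alpha$ is not the right one, and the component-size invariant fails for it. You choose $i^*$ as the \emph{first} lattice point on the walk that \emph{dominates} $(j_1,\ldots,j_r)$. The paper instead takes the \emph{last} lattice point that is \emph{dominated by} $(\lambda_1,\ldots,\lambda_r)$ --- equivalently, the step at which the walk first leaves the box $\prod_s[1,\lambda_s]$ (Lemma~\ref{lemma:choose-color}). These rules disagree, and yours breaks. Concretely, take $r=2$, $\ell_1=\ell_2=3$ and the greedy walk $\alpha=(2,2,1,1)$, for which one computes $x_{1,1}=3$, $x_{1,2}=6$, $k(\alpha)=9$. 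When $(j_1,j_2)=(2,1)$ your rule gives $i^*=3$ and $\sigma=1$, so Painter creates a color-$1$ $P_2$. Builder can reach this situation on a path of only four vertices (colour pattern $2\,2\,1\,1$), violating your invariant since $x_{1,2}=6>4$; he then joins the color-$1$ end of this tree and a separate color-$2$ $P_2$ by a new vertex, forcing a monochromatic $P_3$ in a tree of size $7<9=k(\alpha)$. The key property of the correct rule is that it yields $\nu_{i,\sigma}=\lambda_\sigma$ with \emph{equality} and $\nu_{i,s}\le\lambda_s$ for $s\ne\sigma$; this is precisely what is needed for the inductive lower bound on the component size to match $x_{\sigma,\lambda_\sigma}=k_i$ via \eqref{eq:k-i}. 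With your domination rule one only has $\nu_{i^*,s}\ge j_s$, and since the arms satisfy $j_1'+j_2'=j_{s}-1\le\nu_{i^*,s}-1$, the sum $\sum_s(x_{s,j_1'}+x_{s,j_2'})$ can be strictly smaller than $k_{i^*}$, so the argument does not close.
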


\subsection{Proof of Proposition~\texorpdfstring{\ref{prop:kstar-P1-Pr}}{12}}

We begin by proving that the right hand side of \eqref{eq:kstar-P1-Pr} is an upper bound on $k^*(P_{\ell_1},\ldots,P_{\ell_r})$. We do so by describing a Builder strategy that closely resembles the structure of the recursion~\eqref{eq:recursion}.

\begin{proof}[Proof of Proposition~\ref{prop:kstar-P1-Pr} (upper bound)]
We describe a winning strategy for Builder in the $(P_{\ell_1},\ldots,P_{\ell_r})$-avoidance game with tree size restriction
\begin{equation} \label{eq:restriction}
  k := \max_{\alpha\in W(\ell_1,\ldots,\ell_r)} k(\alpha) \enspace.
\end{equation}
We may and will assume \wolog that Painter plays consistently in the sense of Section~\ref{sec:observations}, implying that if Builder has enforced a copy of some tree on the board, then he can enforce as many additional copies of this tree as he needs. Moreover, we will ignore such repeated steps when counting the number of steps it takes until Builder has enforced a copy of some tree on the board.
Intuitively, Builder's strategy follows the recursion defined in \eqref{eq:recursion} for a sequence $\alpha=(\alpha_i)_{i\geq 1}$, $\alpha_i\in[r]$, that is extracted step by step from Painter's coloring decisions during the game.

Specifically, Builder maintains in each color $s\in[r]$ a list $T_s=(T_{s,0},T_{s,1},\ldots,T_{s,\nu_s-1})$, where $T_{s,0}$ is the null graph ($v(T_{s,0})=0$) and $T_{s,j}$, $1\leq j\leq \nu_s-1$, is a tree containing a monochromatic $P_j$ in color $s$ for which Builder has already enforced a copy on the board. Initially, we have $T_s=(T_{s,0})$ for all $s\in[r]$. In each step, Builder does the following: Given the lists $T_s=(T_{s,0},\ldots,T_{s,\nu_s-1})$, $s\in[r]$, he adds a new vertex $v$ to the board and, for each color $s\in[r]$, connects it to copies of two trees from the list $T_s$ for which the sum $v(T_{s,j_1})+v(T_{s,j_2})$, $j_1+j_2=\nu_s-1$, is minimized, in such a way that if Painter assigns color~$s$ to $v$, a path on $j_1+j_2+1=\nu_s$ many vertices in color~$s$ is created (if one of the contributing graphs is the null graph, then no corresponding edge is added). Let $\sigma\in[r]$ denote the color Painter assigns to $v$, thus creating a tree that contains a copy of $P_{\nu_\sigma}$ in color $\sigma$. If $\nu_\sigma<\ell_\sigma$, then Builder adds this tree to the end of the list $T_\sigma$, which therefore grows by one element. Otherwise the game ends with a monochromatic $P_{\ell_\sigma}$ in color $\sigma$. Let $d'+1$ denote the number of steps until the game ends (we consider these steps indexed from $0$ to $d'$), and $\alpha'\in[r]^{\{1,\ldots,d'\}}$ the sequence of all coloring decisions of Painter except the last one during Builder's strategy. (Thus Painter's decision in step $i$, $0\leq i\leq d'-1$, is given by $\alpha'_{i+1}$, in line with~\eqref{eq:x-sigma}.) As each time Painter uses some color $s\in[r]$ the length of the list $T_s$ grows by exactly one, the sequence $\alpha'$ has at most $\ell_s-1$ entries equal to~$s$.

It follows easily by induction that this Builder strategy satisfies the following property:
For each $0\leq i\leq d'$ the lists $T_s=(T_{s,0},\ldots,T_{s,\nu_{i,s}-1})$, $s\in[r]$, satisfy
\begin{equation*}  (v(T_{s,0}),v(T_{s,1}),\ldots,v(T_{s,\nu_{i,s}-1}))=(x_{s,0},\ldots,x_{s,\nu_{i,s}-1})\enspace,
\end{equation*}
and the tree constructed in step $i$ has $k_i$ many vertices, where $\nu_{i,s}$, $k_i$ and the sequences $x_1,\ldots,x_s$ are defined in \eqref{eq:recursion} for the given $\alpha'$.

From this property it follows with Lemma~\ref{lemma:monotonicity} that the largest tree Builder constructs is the one in the last step of the game, and that it has $k_{d'}$ many vertices. Letting $\alpha$ denote any sequence from the set $W(\ell_1,\ldots,\ell_r)$ with prefix $\alpha'$, and $k_d$ (with $d$ as in \eqref{eq:d}) the value defined in \eqref{eq:recursion} for this $\alpha$, we obtain with Lemma~\ref{lemma:monotonicity} that
\begin{equation*}
  k_{d'} \leq k_d \eqBy{eq:k-alpha} k(\alpha) \leBy{eq:restriction} k \enspace,
\end{equation*}
showing that Builder adhered to the given tree size restriction.
\end{proof}

For proving the lower bound in Proposition~\ref{prop:kstar-P1-Pr} we will need the following observation. (If the reader is deterred by the technical-looking statement, we recommend looking at the very elementary proof first.)

\begin{lemma}[Choosing a color] \label{lemma:choose-color}
Let $\ell_1,\ldots,\ell_r\geq 1$ be integers and $\alpha\in W(\ell_1,\ldots,\ell_r)$. Then for any integers $\lambda_1,\ldots,\lambda_r$ with $1\leq \lambda_s\leq \ell_s$, $s\in[r]$, and $\lambda_s<\ell_s$ for at least one $s\in[r]$, the following holds: There is a unique integer $0\leq i\leq d-1$ such that for $\sigma:=\alpha_{i+1}$ we have
\begin{subequations} \label{eq:nu-s-d-s}
\begin{align}
  \nu_{i,\sigma} &= \lambda_\sigma \enspace, \label{eq:nu-sigma-d-sigma} \\
  \nu_{i,s}      &\leq \lambda_s \enspace, \quad s\in[r]\setminus\{\sigma\} \enspace,
\end{align}
\end{subequations}
where $\nu_{i,s}$, $s\in[r]$, is defined in \eqref{eq:nu-i-s} for the given $\alpha$ and $d=d(\ell_1,\ldots,\ell_r)$ is defined in~\eqref{eq:d}.
Moreover, we then have $\lambda_\sigma<\ell_\sigma$.
\end{lemma}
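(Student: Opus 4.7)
The plan is to interpret $\alpha$ geometrically as an axis-parallel walk in $\mathbb{Z}^r$ from $(1,\ldots,1)$ to $(\ell_1,\ldots,\ell_r)$, and to identify the desired index $i$ as the last index on this walk whose position $\nu_i$ is componentwise dominated by the target point $\lambda:=(\lambda_1,\ldots,\lambda_r)$.

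Concretely, first I would introduce the set
\[
  I := \{\, 0 \le i \le d : \nu_{i,s} \le \lambda_s \text{ for all } s \in [r]\,\}.
\]
Note that $0\in I$ since $\nu_0=(1,\ldots,1)$ and each $\lambda_s\ge 1$, while $d\notin I$ since $\nu_d=(\ell_1,\ldots,\ell_r)$ and $\lambda_s<\ell_s$ for at least one $s$ by hypothesis. Because the walk is coordinatewise non-decreasing in $i$, the set $I$ is an initial interval $\{0,1,\ldots,i^\ast\}$ with $0\le i^\ast\le d-1$; in particular $\alpha_{i^\ast+1}$ is well defined.

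Next, setting $i:=i^\ast$ and $\sigma:=\alpha_{i+1}$, I would verify~\eqref{eq:nu-s-d-s}. The step from $\nu_i$ to $\nu_{i+1}$ changes only coordinate $\sigma$ (by $+1$), and since $\nu_{i+1}\notin I$, this forces $\nu_{i+1,\sigma}>\lambda_\sigma$. Combined with $\nu_{i,\sigma}\le\lambda_\sigma$ (from $i\in I$) this pins down $\nu_{i,\sigma}=\lambda_\sigma$, which is~\eqref{eq:nu-sigma-d-sigma}; the remaining inequalities come directly from $i\in I$. The auxiliary claim $\lambda_\sigma<\ell_\sigma$ then follows from $\lambda_\sigma+1=\nu_{i+1,\sigma}\le\nu_{d,\sigma}=\ell_\sigma$, using coordinatewise monotonicity once more.

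For uniqueness I would argue by contradiction: if $i_1<i_2$ both satisfied \eqref{eq:nu-s-d-s}, write $\sigma_1:=\alpha_{i_1+1}$. Then $\nu_{i_1,\sigma_1}=\lambda_{\sigma_1}$ by \eqref{eq:nu-sigma-d-sigma}, so $\nu_{i_1+1,\sigma_1}=\lambda_{\sigma_1}+1$, and coordinatewise monotonicity would force $\nu_{i_2,\sigma_1}\ge\lambda_{\sigma_1}+1$, contradicting the inequality in~\eqref{eq:nu-s-d-s} for $s=\sigma_1$ at index $i_2$. There is no real obstacle in this proof; the one subtlety is ensuring $i^\ast<d$ so that the step $\alpha_{i^\ast+1}$ actually exists, which is precisely where the assumption that some $\lambda_s$ is strictly less than $\ell_s$ is used.
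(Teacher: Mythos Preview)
Your proof is correct and follows essentially the same geometric idea as the paper: the paper phrases it as locating the unique first step where the walk leaves the box $B=[1,\lambda_1]\times\cdots\times[1,\lambda_r]$, and your index $i^\ast=\max I$ is precisely the starting point of that step. Your write-up is simply a more explicit and carefully argued version of the paper's brief sketch.
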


\begin{proof}
Geometrically, the box $B:=[1,\lambda_1]\times\cdots\times[1,\lambda_r]$ is contained in the larger box $[1,\ell_1]\times\cdots\times[1,\ell_r]$. As the walk corresponding to the sequence $\alpha$ starts at $(1,\ldots,1)$ and ends at $(\ell_1,\ldots,\ell_r)$, there is a unique first step where it leaves the box $B$. It is easy to see that the starting point $\nu_i$ of this step (which lies on the boundary of $B$) is the unique integer $i$ that satisfies the conditions of the lemma.
\end{proof}

Consider now the following Painter strategy for the $(P_{\ell_1},\ldots,P_{\ell_r})$-avoidance game, which is defined for an arbitrary fixed $\alpha\in W(\ell_1,\ldots,\ell_r)$, and which we denote by $\AP_\alpha(P_{\ell_1},\ldots,P_{\ell_r})$.
For each new vertex $v$, Painter determines for each color $s\in[r]$ the number of vertices $\lambda_s'$ of the longest monochromatic path in color $s$ that would be completed if that color were assigned to $v$, and defines $\lambda_s:=\min(\lambda_s',\ell_s)$.
If $(\lambda_1,\ldots,\lambda_r)=(\ell_1,\ldots,\ell_r)$, then she assigns an arbitrary color to $v$ (and the game ends). Otherwise one of the values $\lambda_s$ is strictly smaller than $\ell_s$. Painter then chooses an $0\leq i\leq d-1$ such that for $\sigma:=\alpha_{i+1}$ the relations \eqref{eq:nu-s-d-s} hold (such a choice is possible by Lemma~\ref{lemma:choose-color}), and assigns color~$\sigma$ to $v$. (As we have $\lambda_\sigma<\ell_\sigma$ in this case, this does not create a monochromatic $P_{\ell_\sigma}$ in color $\sigma$. Moreover, using color~$\sigma$ does not increase the length of any monochromatic path in a color different from $\sigma$, implying that the game does not end in this step.)

For the rest of this paper we usually refer to a sequence $\alpha\in W(\ell_1,\ldots,\ell_r)$ as a \emph{strategy sequence}, having the above interpretation in mind. Note that the greedy strategy analyzed in Lemma~\ref{lemma:greedy-lb} is exactly $\AP_\alpha(P_{\ell_1},\ldots,P_{\ell_r})$ for the strategy sequence $\alpha=(r)^{\ell_r-1}\circ (r-1)^{\ell_{r-1}-1}\circ\cdots\circ(1)^{\ell_1-1}$. Here and throughout we use $\circ$ to denote concatenation of sequences, and integer exponents to indicate repetitions.

The next lemma states the strategy invariant that we already briefly mentioned when we introduced the recursion~\eqref{eq:recursion}. 

\begin{lemma}[Strategy invariant] \label{lemma:strategy-invariant}
Playing according to the strategy $\AP_\alpha(P_{\ell_1},\ldots,P_{\ell_r})$ ensures that the following invariant holds throughout (except possibly in the last step when the game ends): For each $s\in[r]$ and each $0\leq t\leq \ell_s-1$, each monochromatic $P_t$ in color $s$ on the board is contained in a component (=tree) with at least $x_{s,t}$ vertices, where $x_{s,t}$ is defined in \eqref{eq:recursion} for the given $\alpha$.
\end{lemma}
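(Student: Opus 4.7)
The plan is to argue by induction on the number of steps played, with the empty initial board as the trivial base case. For the inductive step, I would consider a non-terminal move in which Builder presents a new vertex $v$; with $\lambda'_s,\lambda_s$ and $i$ as in the definition of $\AP_\alpha(P_{\ell_1},\ldots,P_{\ell_r})$, Painter assigns color $\sigma:=\alpha_{i+1}$ to $v$. Monochromatic paths not meeting $v$ remain inside components that can only have grown, so the induction hypothesis propagates automatically for them. Since $v$ receives color $\sigma$, every \emph{new} monochromatic path through $v$ is in color $\sigma$, so it suffices to show that the component $C$ containing $v$ satisfies $|C|\geq x_{\sigma,t}$ for each $1\leq t\leq \nu_{i,\sigma}$ (note that $\nu_{i,\sigma}=\lambda'_\sigma<\ell_\sigma$ because the step is not a losing one).

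For this, I would exploit that the board is a forest, so that $v$ has at most one neighbor in each previously existing tree. Writing $T_1,\ldots,T_m$ for the pairwise distinct old trees containing a neighbor of $v$, one has $|C|=1+\sum_{k}|T_k|$. For each color $s\in[r]$, the inequality $\lambda'_s\geq\nu_{i,s}$ supplied by Lemma~\ref{lemma:choose-color} lets me pick two (possibly `empty') color-$s$ neighbors $u_{k_1^s},u_{k_2^s}$ of $v$ together with color-$s$ paths of lengths $j_1^s,j_2^s$ ending at them inside $T_{k_1^s},T_{k_2^s}$, with $j_1^s+j_2^s=\nu_{i,s}-1$; here the convention $x_{s,0}=0$ from \eqref{eq:x-s-0} absorbs the case in which one side is empty. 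Applying the induction hypothesis gives $|T_{k_q^s}|\geq x_{s,j_q^s}$, and summing yields
\begin{equation*}
  |C|\;\geq\;1+\sum_{s\in[r]}\bigl(x_{s,j_1^s}+x_{s,j_2^s}\bigr)\;\geq\;1+\sum_{s\in[r]}\min_{j_1+j_2=\nu_{i,s}-1}\bigl(x_{s,j_1}+x_{s,j_2}\bigr)\;=\;k_i,
\end{equation*}
by the definition of $k_i$ in \eqref{eq:k-i}. Lemma~\ref{lemma:monotonicity} together with \eqref{eq:x-sigma} then gives $x_{\sigma,t}\leq x_{\sigma,\nu_{i,\sigma}}=k_i$ for every $t\leq\nu_{i,\sigma}$, closing the induction.

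The main obstacle is to justify that no double-counting occurs in the estimate above, i.e., that the trees $T_{k_q^s}$ are pairwise distinct as $(s,q)$ varies. This should rest on two simple facts: each neighbor of $v$ has a unique color, ruling out collisions between different values of $s$; and the two `sides' of a monochromatic color-$s$ path through $v$ meet $v$ at two distinct neighbors, ruling out collisions between $q=1$ and $q=2$ for the same $s$. Both facts combine with the forest condition that forces each previous tree to contain at most one neighbor of $v$. Careful handling of the degenerate cases $j_q^s=0$, where the `tree' in question is taken to be the null graph, then completes the argument.
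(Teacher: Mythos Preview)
Your proposal is correct and follows essentially the same approach as the paper: induction on the number of steps, reduction to the component containing the new vertex $v$, splitting the potential monochromatic paths through $v$ into two sides for each color, and summing the inductive lower bounds on the $2r$ old trees to obtain $|C|\geq k_i=x_{\sigma,\nu_{i,\sigma}}$. The only cosmetic difference is that the paper keeps the full side-lengths $j_{s,1}+j_{s,2}=\lambda_s-1\geq\nu_{i,s}-1$ and invokes Lemma~\ref{lemma:monotonicity} to pass to the minimum in~\eqref{eq:k-i}, whereas you truncate the paths up front so that $j_1^s+j_2^s=\nu_{i,s}-1$ exactly; both are equivalent, and your justification of the no-double-counting step (distinctness of the trees $T_{k_q^s}$ via the color of the neighbor and the forest condition) matches the paper's implicit reasoning.
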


As we shall see, the above invariant is also maintained in the last step when the game ends, but for technical reasons we do not prove this here.

\begin{proof}
To show that this invariant holds, we argue by induction over the number of steps of the game: Initially, no graph is present on the board, and the statement is trivially true (with $t=0$ and $x_{s,0}=0$). For the induction step consider a fixed step where the game does not end, and let $\lambda_s$, $s\in[r]$, be as defined in Painter's strategy.
Furthermore, let $i$ denote the index guaranteed by Lemma~\ref{lemma:choose-color} for these values $\lambda_s$, and let $\sigma=\alpha_{i+1}$ denote the color Painter assigns to the new vertex $v$ in this step.
Clearly, the invariant is maintained for all colors $s\in[r]\setminus\{\sigma\}$, and it remains to show that it holds for $\sigma$. By Lemma~\ref{lemma:monotonicity} we have
\begin{equation*} \label{eq:x-monotonicity}
  x_{\sigma,0}< x_{\sigma,1}< \cdots< x_{\sigma,\ell_\sigma-1} \enspace,
\end{equation*}
implying that it suffices to consider a \emph{longest} monochromatic path in color $\sigma$ that is completed by Painter's decision to assign color $\sigma$ to $v$. Let $Q_\sigma$ denote such a path, and set $t:=v(Q_\sigma)$ (as the game does not end in the current step we have $t\leq \ell_\sigma-1$).
By definition of Painter's strategy, we have
\begin{equation} \label{eq:d-sigma-lambda}
  \lambda_\sigma=t \enspace,
\end{equation}
and for each $s\in[r]\setminus\{\sigma\}$, assigning color~$s$ to $v$ would have completed some (not necessarily maximal) path $Q_s$ in color $s$ on $\lambda_s$ vertices.
Note that the paths $Q_1,\ldots,Q_r$ only share the vertex $v$, and that $v$ divides each of these paths into two paths $Q_{s,1}$ and $Q_{s,2}$ which for $j_{s,1}:=v(Q_{s,1})$ and $j_{s,2}:=v(Q_{s,2})$ satisfy
\begin{equation} \label{eq:js12-sum}
  j_{s,1}+j_{s,2} = \lambda_s-1 \geBy{eq:nu-s-d-s} \nu_{i,s}-1 \enspace.
\end{equation}
Furthermore, observe that the $2r$ paths $Q_{s,1}$ and $Q_{s,2}$, $s\in[r]$, were contained in $2r$ distinct components (=trees) $T_{s,1}$ and $T_{s,2}$ before being joined by the vertex $v$ in the current step. (If $Q_{s,1}$ or $Q_{s,2}$ has no vertices, then we also let $T_{s,1}$ or $T_{s,2}$ be the null graph, i.e., the graph with empty vertex set.)
By induction we have
\begin{equation} \label{eq:v-T1-T2}
\begin{split}
  v(T_{s,1}) &\geq x_{s,j_{s,1}} \enspace, \\
  v(T_{s,2}) &\geq x_{s,j_{s,2}} \enspace.
\end{split}
\end{equation}
Combining our previous observations, we obtain that the vertex $v$ is contained in a tree $T$ satisfying
\begin{equation} \label{eq:vT-lb}
  v(T)= 1+\sum_{s\in[r]} \big(v(T_{s,1})+v(T_{s,2})\big)
      \geBy{eq:v-T1-T2} 1+\sum_{s\in[r]} (x_{s,j_{s,1}}+x_{s,j_{s,2}})
      \geByM{\eqref{eq:k-i},\eqref{eq:js12-sum}} k_i \enspace,
\end{equation}
where we also used Lemma~\ref{lemma:monotonicity} in the last step.
Combining \eqref{eq:x-sigma}, \eqref{eq:nu-sigma-d-sigma} and \eqref{eq:d-sigma-lambda} shows that the right hand side of \eqref{eq:vT-lb} equals $x_{\sigma,t}$, proving that the claimed invariant holds.
\end{proof}

We are now in a position to prove the lower bound in Proposition~\ref{prop:kstar-P1-Pr}. The argument is very similar to the inductive argument in the previous proof, but due to some subtleties we have to treat the step in which the game ends separately.

\begin{proof}[Proof of Proposition~\ref{prop:kstar-P1-Pr} (lower bound)]
We will argue that the Painter strategy $\AP_\alpha(P_{\ell_1},\ldots,P_{\ell_r})$ is a winning strategy in the $(P_{\ell_1},\ldots,P_{\ell_r})$-avoidance game with tree size restriction $k(\alpha)-1$, where $k(\alpha)$ is defined in \eqref{eq:recursion} and \eqref{eq:d-k-alpha}. Optimizing over the choice of $\alpha\in W(\ell_1,\ldots,\ell_r)$, we thus obtain a winning strategy for Painter in the game with tree size restriction $\max_{\alpha\in W(\ell_1,\ldots,\ell_r)} k(\alpha)-1$, as required.

Let $\alpha\in W(\ell_1,\ldots,\ell_r)$ be fixed and suppose Painter plays according to the strategy $\AP_\alpha(P_{\ell_1},\ldots,P_{\ell_r})$. Suppose for the sake of contradiction that Painter loses with a monochromatic path $P_{\ell_s}$ in some color $s\in[r]$. By the definition of Painter's strategy, this means that in the last step of the game assigning \emph{any} of the colors $s\in[r]$ to the last vertex $v$ would complete a path $P_{\ell_s}$ in color $s$. This implies that in each color $s\in[r]$ the vertex $v$ joins two (not necessarily maximal) paths $Q_{s,1}$ and $Q_{s,2}$ in color $s$ which for $j_{s,1}:=v(Q_{s,1})$ and $j_{s,2}:=v(Q_{s,2})$ satisfy
\begin{equation} \label{eq:js12-sum-ell}
  j_{s,1}+j_{s,2} = \ell_s-1 \enspace.
\end{equation}
Denoting for every $s\in[r]$ by $T_{s,1}$ and $T_{s,2}$ the components (=trees) that were joined by $v$ and that contain $Q_{s,1}$ and $Q_{s,2}$, respectively, we obtain from Lemma~\ref{lemma:strategy-invariant} that the vertex $v$ is contained in a tree $T$ satisfying
\begin{equation*}
  v(T)=1+\sum_{s\in[r]} \big(v(T_{s,1})+v(T_{s,2})\big)
      \geq 1+\sum_{s\in[r]} (x_{s,j_{s,1}}+x_{s,j_{s,2}})
      \geByM{\eqref{eq:k-i},\eqref{eq:k-alpha},\eqref{eq:js12-sum-ell}} k(\alpha) \enspace,
\end{equation*}
where in the last step we also used that for $d$ defined in \eqref{eq:d} we have $\nu_d=(\ell_1,\ldots,\ell_r)$.
This yields the desired contradiction and completes the proof.
\end{proof}

\subsection{Exact values of \texorpdfstring{$k^*(P_\ell,2)$}{k*(Pl,2)} for small values of \texorpdfstring{$\ell$}{l}}
\label{sec:implementation}

The values in Table~\ref{tab:kstar-Pell} were found by implementing the recursion in \eqref{eq:recursion} and \eqref{eq:d-k-alpha} and using Proposition~\ref{prop:kstar-P1-Pr}.
The computationally most expensive part in this approach is the maximization in \eqref{eq:kstar-P1-Pr}, as e.g.\ for the (symmetric) $P_\ell$-avoidance game with $r=2$ colors it requires maximizing over all strategy sequences from $W(\ell,\ell)$, of which there are $\binom{2(\ell-1)}{\ell-1}=4^{(1+o(1))\ell}$ many. However, by using an appropriate branch-and-bound technique, the set of strategy sequences to be considered in the maximization can be reduced substantially.
A program that implements this and further optimizations to compute $k^*(P_{\ell_1},P_{\ell_2})$ is available from the authors' websites~\cite{homepage-ms}.

We conclude this section by giving an example of a Painter strategy for the (symmetric) $P_\ell$-avoidance game with $r=2$ colors that outperforms the greedy strategy. For $\ell=28$, there are four strategy sequences from the set $W(28,28)$ achieving the optimal performance $k^*(P_{28},2)=28^2+7=791$ (cf.\ Table~\ref{tab:kstar-Pell}). They are given by $\alpha=(1)^6\circ(2,2)\circ(1)^7\circ(2)\circ(1)^{14}\circ(2)^{24}$, $\alpha'=(1,1,2,1,2,2)\circ(1)^{24}\circ(2)^{24}$, and the sequences $\ol{\alpha}$ and $\ol{\alpha}\,'$ that are obtained from $\alpha$ and $\alpha'$ by interchanging the $1$ and $2$ entries, exploiting the obvious symmetry. 

\section{Analyzing the recursion} \label{sec:analyze-recursion}

In this section we prove Theorems~\ref{thm:kstar-Pell-2}--\ref{thm:delta-c-bounds} by analyzing the recursion defined in \eqref{eq:recursion} and \eqref{eq:d-k-alpha} and using Proposition~\ref{prop:kstar-P1-Pr}. We focus on the asymmetric path-avoidance game in most of the upcoming arguments, and derive our results for the symmetric game at the very end. For the rest of this paper we restrict our attention to the case of $r=2$ colors.

\subsection{Asymptotic behaviour} \label{sec:asymptotics}

A crucial ingredient in our analysis of the recursion in \eqref{eq:recursion} and \eqref{eq:d-k-alpha} is the study of its asymptotic behaviour along a walk as described in Section~\ref{sec:defining-the-recursion} which after some initial turns moves towards infinity only along one coordinate direction (think e.g.\ of infinitely extending the walk in Figure~\ref{fig:recursion} in coordinate direction 1). The following completely self-contained lemma is the basis for this approach.

For any sequence $(x_\nu)_{\nu\geq 0}$ we define the corresponding sequence of first differences as $\Delta(x):=(x_{\nu+1}-x_\nu)_{\nu\geq 0}$.

\begin{lemma}[Recursion becomes periodic] \label{lemma:periodicity}
Let $x_0,\ldots,x_t$ and $\beta$ be arbitrary integers, and recursively define
\begin{equation} \label{eq:abstract-recursion}
  x_\nu:=\beta+\min_{\substack{j_1,j_2\geq 0: \\ j_1+j_2=\nu-1}}(x_{j_1}+x_{j_2}) \enspace, \qquad \nu\geq t+1\enspace.
\end{equation}
Furthermore, let $p$ be an integer from the set $\argmin_{0\leq j \leq t}\frac{x_j+\beta}{j+1}$.
Then the sequence $\Delta(x)=(x_{\nu+1}-x_\nu)_{\nu\geq 0}$ becomes periodic with period length $p+1$, and for all $\nu\geq t+1$ we have
\begin{equation} \label{eq:growth-upper-bound}
  x_\nu-x_{\nu-(p+1)}\leq x_p+\beta
\end{equation}
with equality for all large enough $\nu$. Moreover, for all $k\geq 1$ we have
\begin{equation} \label{eq:growth-lower-bound}
  x_{p+k(p+1)}-x_p\geq k(x_p+\beta) \enspace.
\end{equation}
\end{lemma}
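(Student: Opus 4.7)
The plan is to introduce the \emph{slope} $\mu := (x_p + \beta)/(p+1)$, the minimum of $(x_j + \beta)/(j+1)$ on the initial range $0 \leq j \leq t$, and use it as a reference line throughout. The cornerstone of the argument is the global lower bound
\begin{equation*}
  x_\nu \geq \mu(\nu+1) - \beta \qquad \text{for all } \nu \geq 0,
\end{equation*}
which I would prove by induction on $\nu$. The base case $0 \leq \nu \leq t$ is just the defining property of $p$ as a minimizer. For the inductive step at $\nu \geq t+1$, plugging the induction hypothesis into the recursion~\eqref{eq:abstract-recursion} and collapsing $(j_1+1)+(j_2+1) = \nu+1$ inside the minimum yields $x_\nu \geq \mu(\nu+1) - \beta$.

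Both of the easy inequalities in the lemma now fall out. For~\eqref{eq:growth-lower-bound}, take $\nu = p + k(p+1)$, so that $\nu+1 = (k+1)(p+1)$ and the slope bound rearranges to $x_\nu - x_p \geq k(x_p + \beta)$. For the upper bound in~\eqref{eq:growth-upper-bound}, apply the recursion with the single decomposition $j_1 = p$, $j_2 = \nu - (p+1)$ (valid since $\nu \geq t+1 \geq p+1$), which gives $x_\nu \leq \beta + x_p + x_{\nu-(p+1)}$.

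The substantive task is to show that this upper bound is attained for all sufficiently large $\nu$; this simultaneously yields the periodicity statement, because $\Delta(x)$ has period $p+1$ if and only if the sequence $x_\nu - x_{\nu-(p+1)}$ is eventually constant. For this I would pass to the rescaled sequence $y_\nu := x_\nu - \mu(\nu+1) + \beta$. The slope bound reads $y_\nu \geq 0$, and a one-line calculation shows the additive shift drops out of the recursion, leaving
\begin{equation*}
  y_\nu = \min_{j_1+j_2=\nu-1}(y_{j_1}+y_{j_2}) \quad \text{for } \nu \geq t+1.
\end{equation*}
Since $y_p = 0$, choosing $j_1 = p$ in this recursion gives the monotonicity $y_\nu \leq y_{\nu-(p+1)}$ for all $\nu \geq t+1$.

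The final step is a discrete descent. Split $(y_\nu)$ into its $p+1$ subsequences indexed by the residue $\nu \bmod (p+1)$; within each class the subsequence is eventually nonincreasing and bounded below by $0$. Crucially, consecutive differences $y_\nu - y_{\nu-(p+1)} = (x_\nu - x_{\nu-(p+1)}) - (x_p+\beta)$ are \emph{integers}, because $\beta$ and all the $x_\nu$ are integers. A nonincreasing real sequence bounded below whose consecutive differences are nonpositive integers must eventually be constant, so $y_\nu = y_{\nu-(p+1)}$ for all sufficiently large $\nu$; unwinding the definition of $y_\nu$ gives $x_\nu - x_{\nu-(p+1)} = x_p + \beta$ eventually, yielding both the equality claim in~\eqref{eq:growth-upper-bound} and the $(p+1)$-periodicity of $\Delta(x)$. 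The main subtlety I expect is precisely this integrality-plus-boundedness pinch, which is why the integrality of $\beta$ and the $x_j$ is essential; the rest of the argument is bookkeeping once $\mu$ and the rescaled sequence $y_\nu$ are in place.
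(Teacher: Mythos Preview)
Your proof is correct and follows essentially the same route as the paper's: both introduce an auxiliary sequence that subtracts off the linear growth, observe that it is nonnegative with $y_p=0$, deduce the monotonicity $y_\nu \leq y_{\nu-(p+1)}$, and use integrality to force eventual stabilization in each residue class. The only cosmetic difference is that the paper scales by an extra factor of $p+1$, setting $y_\nu := (p+1)(x_\nu+\beta) - (x_p+\beta)(\nu+1)$, so that $y_\nu$ itself is an integer sequence; you instead work with the unscaled $y_\nu = x_\nu - \mu(\nu+1) + \beta$ (which may be rational) and observe that the \emph{differences} $y_\nu - y_{\nu-(p+1)}$ are integers, which is all that is needed.
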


Note that Lemma~\ref{lemma:periodicity} quantifies the asymptotic behaviour of the recursion~\eqref{eq:abstract-recursion}: In the long run, the values will change by $x_p+\beta$ every $p+1$ steps, i.e., for $\nu\to\infty$ we have
\begin{equation*}
  x_\nu=(\delta+o(1)) \cdot \nu \enspace,
\end{equation*}
where
\begin{equation} \label{eq:delta}
\delta=\delta(x_0,\ldots,x_t,\beta):=\min_{0\leq j \leq t}\frac{x_j+\beta}{j+1}\enspace.
\end{equation}

\begin{proof}
For any two integers $a\geq 1$ and $b$, applying the transformation
\begin{equation} \label{eq:substitution}
  y_\nu=a(x_\nu+\beta)-b(\nu+1)
\end{equation}
to \eqref{eq:abstract-recursion} yields an integer sequence $(y_\nu)_{\nu\geq 0}$ that satisfies the recursion
\begin{equation} \label{eq:recursion-subst}
  y_\nu=\min_{\substack{j_1,j_2\geq 0: \\ j_1+j_2=\nu-1}}(y_{j_1}+y_{j_2}) \enspace, \qquad \nu\geq t+1\enspace.
\end{equation}
Furthermore, by \eqref{eq:substitution} the first differences of the sequences $(x_\nu)_{\nu\geq 0}$ and $(y_\nu)_{\nu\geq 0}$ are related via
\begin{equation} \label{eq:delta-transform}
  \Delta(y)=a \Delta(x)-b \enspace.
\end{equation}
Applying the transformation~\eqref{eq:substitution} with
\begin{equation} \label{eq:choice-a-b}
  a:=p+1 \qquad \text{and} \qquad b:=x_p+\beta\enspace,
\end{equation}
we obtain with the definition of $p$ in the lemma that $y_p=0$ and $y_\nu\geq 0$ for all $0\leq \nu\leq t$. By these initial conditions and by \eqref{eq:recursion-subst}, \emph{all} elements of the sequence $(y_\nu)_{\nu\geq 0}$ are non-negative. Furthermore, using that $y_p=0$ it follows from \eqref{eq:recursion-subst} that
\begin{equation} \label{eq:decreasing-subsequence}
  y_\nu\leq y_{\nu-(p+1)} \quad \text{for all $\nu\geq t+1$} \enspace.
\end{equation}
Combining this with the non-negativity of the sequence $(y_\nu)_{\nu\geq 0}$, we obtain that for each residue class modulo $p+1$ the corresponding subsequence of $(y_\nu)_{\nu\geq 0}$ becomes constant, and that consequently the sequence itself becomes periodic with period length $p+1$. It follows that the sequence $\Delta(y)$ and by \eqref{eq:delta-transform} also the sequence $\Delta(x)$ become periodic with period length $p+1$.

Note that for all $\nu\geq t+1$ we have
\begin{equation*}
  x_\nu-x_{\nu-(p+1)} \eqBy{eq:delta-transform} \frac{y_\nu-y_{\nu-(p+1)}+b(p+1)}{a}
                      \leBy{eq:decreasing-subsequence} \frac{b(p+1)}{a}
                      \eqBy{eq:choice-a-b} x_p+\beta \enspace,
\end{equation*}
with equality for all large enough $\nu$, proving \eqref{eq:growth-upper-bound}.

Similarly, using the non-negativity of the sequence $(y_\nu)_{\nu\geq 0}$ and $y_p=0$ we obtain for all $k\geq 1$
\begin{equation*}
  x_{p+k(p+1)}-x_p \eqBy{eq:delta-transform} \frac{y_{p+k(p+1)}-y_p+bk(p+1)}{a} \geq \frac{bk(p+1)}{a} \eqBy{eq:choice-a-b} k(x_p+\beta) \enspace,
\end{equation*}
proving \eqref{eq:growth-lower-bound}.
\end{proof}

\subsection{Explicit version of Theorem~\texorpdfstring{\ref{thm:kstar-Pell-Pc}}{5}}

Using Lemma~\ref{lemma:periodicity} we will show that asymptotically optimal Painter strategies for the asymmetric $(P_\ell,P_c)$-avoidance game (i.e., strategies achieving the lower bound stated in Theorem~\ref{thm:kstar-Pell-Pc}) can be constructed as follows. Intuitively, we distinguish two phases of the corresponding walks: a short initial `preparation' phase and a long `payoff' phase, which is just a straight segment of the walk extending into coordinate direction $1$. The goal of the preparation phase is \emph{not} to directly optimize the resulting recursion values during this phase, but to optimize the constant $\delta$ as defined in~\eqref{eq:delta} that arises when applying Lemma~\ref{lemma:periodicity} to the payoff phase.

These ideas lead to the  following definition of the constant $\delta(c)$ appearing in Theorem~\ref{thm:kstar-Pell-Pc}. For any strategy sequence $\alpha\in W(\ell,c)$ we define
\begin{subequations} \label{eq:def-delta-c}
\begin{align}
  \beta(\alpha)  &:= 1+\min_{\substack{j_1,j_2\geq 0: \\ j_1+j_2=c-1}} (x_{2,j_1}+x_{2,j_2}) \enspace, \label{eq:beta-alpha} \\
  \delta(\alpha) &:= \min_{0\leq j\leq \ell-1} \frac{x_{1,j}+\beta(\alpha)}{j+1} \enspace, \label{eq:delta-alpha}
\end{align}
where $x_1=(x_{1,0},\ldots,x_{1,\ell-1})$ and $x_2=(x_{2,0},\ldots,x_{2,c-1})$ are defined via the recursion \eqref{eq:recursion}.
Using those definitions we set
\begin{equation} \label{eq:delta-1}
  \delta(1):=1 \enspace,
\end{equation}
and for any $c\geq 2$,
\begin{equation} \label{eq:delta-c}
  \delta(c):=\sup_{\substack{\ell\geq 1 \\ \alpha\in W(\ell,c) \colon \alpha_{\ell+c-2}=2}} \delta(\alpha) \enspace,
\end{equation}
\end{subequations}
where the condition $\alpha_{\ell+c-2}=2$ expresses that the last step of the walk corresponding to $\alpha$ is towards the second coordinate. (We will see in  Lemma~\ref{lemma:ub-delta-c} below that $\delta(c)$ is indeed a well-defined finite value.)

\begin{theorem}[Explicit version of Theorem~\ref{thm:kstar-Pell-Pc}] \label{thm:kstar-Pell-Pc-explicit}
For any $c\geq 1$ and any $\ell\geq 1$ we have
\begin{equation*}
  k^*(P_\ell,P_c)\leq \delta(c)\cdot\ell
\end{equation*}
and
\begin{equation*}
  k^*(P_\ell,P_c)\geq(\delta(c)-o(1))\cdot\ell \enspace,
\end{equation*}
where $\delta(c)$ is defined in \eqref{eq:def-delta-c}, 
and $o(1)$ stands for a non-negative function of $c$ and $\ell$ that tends to 0 for $c$ fixed and $\ell\rightarrow\infty$.
\end{theorem}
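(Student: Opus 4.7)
The plan is to apply Proposition~\ref{prop:kstar-P1-Pr} to translate the statement into an assertion about $\max_{\alpha\in W(\ell,c)} k(\alpha)$, and then exploit the decomposition of any $\alpha\in W(\ell,c)$ into a prefix $\alpha^*$ that ends with a step in direction~$2$ followed by a tail of $1$-steps. The case $c=1$ is trivial, so assume $c\ge 2$. Given $\alpha\in W(\ell,c)$, let $\alpha^*=(\alpha_1,\ldots,\alpha_m)\in W(\ell^*,c)$ be the prefix obtained by truncating $\alpha$ right after its last entry equal to $2$, so $m=\ell^*+c-2$ and $\alpha^*_{\ell^*+c-2}=2$, making $\alpha^*$ an admissible sequence in the supremum defining $\delta(c)$; in particular $\delta(\alpha^*)\le\delta(c)$. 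Since the tail of $\alpha$ consists only of $1$-steps, the $x_2$-entries stop changing after step~$m$, hence $\beta(\alpha)=\beta(\alpha^*)=:\beta$, and the entries $x_{1,\nu}$ for $\nu\ge\ell^*$ satisfy the abstract recursion~\eqref{eq:abstract-recursion} of Lemma~\ref{lemma:periodicity} with parameter $\beta$ and initial data $x_{1,0},\ldots,x_{1,\ell^*-1}$; the growth rate~$\delta$ produced by that lemma equals $\delta(\alpha^*)$, and $k(\alpha)=x_{1,\ell}$ once we let the recursion define the value at index~$\ell$.

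For the upper bound I would prove by strong induction on $j$ the uniform claim $x_{1,j}\le\delta(\alpha^*)\cdot j$ for every $j\ge 0$. Let $p$ be a minimizer in~\eqref{eq:delta-alpha} so that $x_{1,p}=\delta(\alpha^*)(p+1)-\beta$ exactly. For $j\ge p+1$ the inductive step is clean: choosing $j_1=p,\ j_2=j-1-p$ in the minimum of the recursion gives
\begin{equation*}
  x_{1,j}\le \beta_i+x_{1,p}+x_{1,j-1-p}\le \beta_i+\delta(\alpha^*)(p+1)-\beta+\delta(\alpha^*)(j-1-p)=\delta(\alpha^*)\cdot j+(\beta_i-\beta),
\end{equation*}
and $\beta_i\le\beta$: the effective parameter equals $\beta$ for $j\ge\ell^*$, and is at most $\beta$ during $\alpha^*$ because $n\mapsto\min_{j_1+j_2=n}(x_{2,j_1}+x_{2,j_2})$ is non-decreasing in~$n$ by the monotonicity of $x_2$ from Lemma~\ref{lemma:monotonicity}. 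Since $\ell\ge\ell^*\ge p+1$, this delivers $k(\alpha)=x_{1,\ell}\le\delta(\alpha^*)\cdot\ell\le\delta(c)\cdot\ell$ once the induction is closed below; maximizing over $\alpha$ and invoking Proposition~\ref{prop:kstar-P1-Pr} then proves the upper bound.

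For the lower bound I would fix $\varepsilon>0$ and, using the definition of $\delta(c)$ as a supremum, pick $\alpha^*\in W(\ell^*,c)$ with $\alpha^*_{\ell^*+c-2}=2$ and $\delta(\alpha^*)\ge\delta(c)-\varepsilon$. For $\ell\ge\ell^*$, set $\alpha:=\alpha^*\circ(1)^{\ell-\ell^*}\in W(\ell,c)$, let $k:=\lfloor(\ell-p)/(p+1)\rfloor$, and $\nu:=p+k(p+1)\le\ell$, so that $\ell-\nu\le p$. Applying~\eqref{eq:growth-lower-bound} and the monotonicity of $(x_{1,j})$ from Lemma~\ref{lemma:monotonicity}, together with the identity $x_{1,p}=\delta(\alpha^*)(p+1)-\beta$, we obtain
\begin{equation*}
  k(\alpha)=x_{1,\ell}\ge x_{1,\nu}\ge x_{1,p}+k(x_{1,p}+\beta)=\delta(\alpha^*)\cdot\nu+(\delta(\alpha^*)-\beta)\ge(\delta(c)-\varepsilon)\cdot\ell-C_{\alpha^*},
\end{equation*}
where $C_{\alpha^*}$ depends on $\alpha^*$ (hence on $c$ and $\varepsilon$) but not on~$\ell$. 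Proposition~\ref{prop:kstar-P1-Pr} gives $k^*(P_\ell,P_c)\ge k(\alpha)$; dividing by $\ell$ and sending first $\ell\to\infty$ and then $\varepsilon\to 0$ produces the claimed bound with a non-negative $o(1)$ term.

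The main obstacle is the sub-case $1\le j\le p$ of the induction for the upper bound, where the clean substitution $j_1=p$ in the recursion is not available. The plan there is to argue directly on these few initial indices by exploiting the defining property of $p$ as the minimizer of $(x_{1,j}+\beta)/(j+1)$ together with the monotonicity of $x_2$: one shows that whenever a $1$-step of $\alpha^*$ produces $x_{1,j}$ with $j\le p$, the intermediate parameter $\beta_i$ necessarily satisfies $\beta_i\le\delta(\alpha^*)$, so that the crude estimate $x_{1,j}\le\beta_i+x_{1,j-1}$ coming from the choice $j_1=0$ combined with the induction hypothesis at $j-1$ gives $x_{1,j}\le\delta(\alpha^*)\cdot j$. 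Apart from this technical base-case step, everything else is direct bookkeeping around Lemma~\ref{lemma:periodicity} and Proposition~\ref{prop:kstar-P1-Pr}.
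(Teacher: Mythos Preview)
Your lower-bound argument is fine and matches the paper's: pick an $\alpha^*$ with $\delta(\alpha^*)$ close to $\delta(c)$, append a tail of $1$-steps, and read off the linear growth from Lemma~\ref{lemma:periodicity}.

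The upper-bound argument, however, has a real gap in exactly the place you flag as the ``main obstacle.'' Your proposed fix for the base case $1\le j\le p$ is the claim that the intermediate parameter satisfies $\beta_i\le\delta(\alpha^*)$ whenever the corresponding $1$-step produces $x_{1,j}$ with $j\le p$. This is false. Take $\alpha^*=(1,2,2,2,1,1,2)\in W(4,5)$: one computes $x_1=(0,1,8,9)$, $x_2=(0,2,4,6,16)$, $\beta=9$, $\delta(\alpha^*)=\min(9,5,17/3,18/4)=9/2$, hence $p=3$; but the step producing $x_{1,2}$ has $\nu_{i,2}=4$ and $\beta_i=1+\min(0+6,2+4)=7>9/2$. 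So the inequality $\beta_i\le\delta(\alpha^*)$ does not hold, and the crude bound $x_{1,j}\le\beta_i+x_{1,j-1}$ does not close the induction. More generally, trying to prove $x_{1,j}\le\delta(\alpha^*)\cdot j$ by induction on $j$ alone seems hopeless below $p$: there is no mechanism within a single walk $\alpha^*$ to control these initial values tightly enough.

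The paper resolves this differently. It proves the three statements $\delta(c)\le c^{\log_2 3}$, $\delta(\hat c)\le\delta(c)$ for $\hat c\le c$, and $k^*(P_\ell,P_c)\le\delta(c)\cdot\ell$ by a \emph{joint induction on $c$}. For the last of these, given $\alpha\in W(\ell,c)$ one writes $\ell=\hat\ell+m(p+1)$ with $\hat\ell\le\ell^*-1$, applies \eqref{eq:growth-upper-bound} of Lemma~\ref{lemma:periodicity} to get $x_{1,\ell}\le x_{1,\hat\ell}+m(x_{1,p}+\beta)$, and then---crucially---observes that $x_{1,\hat\ell}=k(\hat\alpha)$ for a prefix $\hat\alpha\in W(\hat\ell,\hat c)$ with $\hat c<c$. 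The induction hypothesis together with the monotonicity $\delta(\hat c)\le\delta(c)$ then gives $x_{1,\hat\ell}\le\delta(c)\cdot\hat\ell$ (not $\delta(\alpha^*)\cdot\hat\ell$), and the pieces combine to $\delta(c)\cdot\ell$. In short, the small-index values are controlled by passing to a shorter game with one fewer color level, not by any property internal to $\alpha^*$.
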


We prove Theorem \ref{thm:kstar-Pell-Pc-explicit} (and thus Theorem~\ref{thm:kstar-Pell-Pc}) in the next section.

\subsection{Proof of Theorem~\texorpdfstring{\ref{thm:kstar-Pell-Pc-explicit}}{16}} \label{sec:Pell-Pc-asymptotics}

We will prove the following three lemmas by induction. Note that Lemma~\ref{lemma:ub-k-ell-c-delta-c} is exactly the upper bound part of Theorem~\ref{thm:kstar-Pell-Pc-explicit}, and that moreover Lemma~\ref{lemma:ub-delta-c} yields the upper bound part of Theorem~\ref{thm:delta-c-bounds} (we have $\log_2(3)=1.584\ldots < 1.59$).

\begin{lemma}[Upper bound for $\delta(c)$] \label{lemma:ub-delta-c}
For any $c\geq 1$ we have $\delta(c) \leq c^{\log_2(3)}$.
\end{lemma}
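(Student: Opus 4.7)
The proof proceeds by induction on $c$. For the base cases $c \in \{1, 2, 3\}$ we already have $\delta(c) = c$ by \eqref{eq:delta-1} together with the discussion following Lemma~\ref{lemma:kstar-F1-F2-ub}, and $c \leq c^{\log_2 3}$ since $\log_2 3 \geq 1$. So I assume $c \geq 4$ and that the bound holds for all smaller values. It suffices to show $\delta(\alpha) \leq c^{\log_2 3}$ for an arbitrary $\alpha \in W(\ell, c)$ with $\alpha_{\ell+c-2} = 2$; taking the supremum in \eqref{eq:delta-c} then yields the lemma.

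The strategy is to truncate the walk associated with $\alpha$ at the first step where its second coordinate reaches $c' := \lceil c/2 \rceil$. Let $i^*$ denote this step, so $\alpha_{i^*} = 2$ and $\nu_{i^*, 2} = c'$, and set $\ell' := \nu_{i^*, 1}$. The prefix $\alpha' := (\alpha_1, \ldots, \alpha_{i^*})$ lies in $W(\ell', c')$ with $\alpha'_{\ell'+c'-2} = 2$. Because the recursion \eqref{eq:recursion} reads $\alpha$ strictly left-to-right, the values $x_{1, 0}, \ldots, x_{1, \ell'-1}$ and $x_{2, 0}, \ldots, x_{2, c'-1}$ coincide whether the recursion is run on $\alpha'$ or on $\alpha$. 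The inductive hypothesis gives $\delta(\alpha') \leq (c')^{\log_2 3}$, so by \eqref{eq:delta-alpha} there is some $j^* \in \{0, \ldots, \ell'-1\}$ with $x_{1, j^*} + \beta(\alpha') \leq (c')^{\log_2 3}(j^* + 1)$. Using this same $j^*$ as a candidate in the minimum \eqref{eq:delta-alpha} for $\alpha$ itself gives
\begin{equation*}
  \delta(\alpha) \leq (c')^{\log_2 3} + \frac{\beta(\alpha) - \beta(\alpha')}{j^* + 1}.
\end{equation*}

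The crux is to control the increment $\beta(\alpha) - \beta(\alpha')$. Writing $c'' := c - c' = \lfloor c/2 \rfloor$ and choosing $(j_1, j_2) = (c'-1, c'')$ in \eqref{eq:beta-alpha} yields $\beta(\alpha) \leq 1 + x_{2, c'-1} + x_{2, c''}$. The ``new'' contribution is the $k$-value $x_{2, c''}$, which is produced by the portion of the walk lying strictly after step $i^*$. To bound it, one reinterprets that portion as a second strategy sequence ending in direction 2, whose second-coordinate range is of size $c'' + 1 \leq c - 1$, and applies the inductive hypothesis a second time to obtain an estimate of the shape $x_{2, c''} \leq (c'')^{\log_2 3}(j^* + 1)$. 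Everything is then combined via the superadditivity inequality $(c')^{\log_2 3} + (c'')^{\log_2 3} \leq c^{\log_2 3}$, which holds because $x \mapsto x^{\log_2 3}$ is convex with exponent $\log_2 3 \geq 1$ and $c' + c'' = c$; plugging this into the display above yields $\delta(\alpha) \leq c^{\log_2 3}$ as desired.

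The main obstacle I anticipate lies in making the reinterpretation step rigorous: the $k$-values produced along the suffix of $\alpha$ depend on the $x_1$-values already built up during the prefix, so isolating the ``suffix contribution'' from the global recursion requires a careful accounting --- likely a strengthening of the statement that matches the two companion lemmas proved by the same joint induction. Once that bookkeeping is in place, the exponent $\log_2 3$ arises naturally from the balanced binary split $c = c' + c''$ with $c', c'' \approx c/2$, exactly mirroring the Karatsuba-style divide-and-conquer recurrence $T(c) \leq 3\,T(c/2)$.
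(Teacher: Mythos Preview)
Your divide-and-conquer instinct is right, and you correctly identify where the argument breaks: the ``reinterpretation of the suffix'' step is a genuine gap, not just missing bookkeeping. The specific problem is a mismatch of length scales. You pick $j^*$ as the minimizer of $\frac{x_{1,j}+\beta(\alpha')}{j+1}$ for the \emph{prefix} $\alpha'$, and then need $x_{2,c''}\le (c'')^{\log_2 3}(j^*+1)$. But the quantity that naturally controls $x_{2,c''}$ is the first coordinate $\ell_{c''}$ of the walk at the moment $x_{2,c''}$ is written, and there is no relation forcing $\ell_{c''}\le j^*+1$. So even if the suffix could be reinterpreted as a fresh strategy sequence (it cannot, since its $k$-values depend on the prefix's $x_1$-entries), the resulting bound would involve the wrong denominator and the superadditivity of $x\mapsto x^{\log_2 3}$ would not close the estimate. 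A secondary issue: invoking $\delta(c)=c$ for $c\in\{2,3\}$ from the discussion after Lemma~\ref{lemma:kstar-F1-F2-ub} is circular, since that link goes through Theorem~\ref{thm:kstar-Pell-Pc-explicit}, whose proof uses the present lemma.

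The paper fixes both problems at once. It does \emph{not} use the minimizer $j^*$; instead it evaluates \eqref{eq:delta-alpha} at the specific index $j=\ell_{\lceil(c-1)/2\rceil}-1$, where $\ell_j$ is the first coordinate when the walk takes its $j$-th step in direction~2. Both numerator pieces are then governed by the same scale: $x_{1,\ell_{\lceil(c-1)/2\rceil}-1}\le \delta(\lceil\tfrac{c-1}{2}\rceil)\cdot\ell_{\lceil(c-1)/2\rceil}-1$ and $\beta(\alpha)\le 1+2x_{2,\lceil(c-1)/2\rceil}\le 1+2\,\delta(\lceil\tfrac{c-1}{2}\rceil)\cdot\ell_{\lceil(c-1)/2\rceil}$, so dividing by $\ell_{\lceil(c-1)/2\rceil}$ gives the clean recurrence $\delta(\alpha)\le 3\,\delta(\lceil\tfrac{c-1}{2}\rceil)$. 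The crucial input $x_{2,j}\le \delta(j)\cdot\ell_j$ is exactly the companion Lemma~\ref{lemma:ub-k-ell-c-delta-c}, established in the same joint induction (together with Lemma~\ref{lemma:delta-monotonicity}); this is the ``strengthening'' you anticipate, and it replaces your suffix reinterpretation entirely. With only $c=1$ as base case, the recurrence then yields $\delta(c)\le 3\cdot(\tfrac{c}{2})^{\log_2 3}=c^{\log_2 3}$.
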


\begin{lemma}[Monotonicity of $\delta(c)$] \label{lemma:delta-monotonicity}
For all $1\leq \chat\leq c$ we have $\delta(\chat\,)\leq \delta(c)$.
\end{lemma}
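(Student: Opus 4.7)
The plan is to establish monotonicity by a direct construction on strategy sequences. Given $\chat \leq c$, for every $\alpha \in W(\ell, \chat)$ contributing to the supremum defining $\delta(\chat)$, I will exhibit a companion $\alpha' \in W(\ell, c)$ with $\alpha'_{\ell+c-2} = 2$ satisfying $\delta(\alpha') \geq \delta(\alpha)$; taking the supremum over $\alpha$ will then yield $\delta(c) \geq \delta(\chat)$. The base case $\chat = 1$ needs separate handling since the supremum in~\eqref{eq:delta-c} is vacuous there.

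For $\chat \geq 2$, I will take $\alpha' := \alpha \circ (2)^{c - \chat}$, obtained by appending $c - \chat$ direction-$2$ steps at the end of $\alpha$. Clearly $\alpha' \in W(\ell, c)$ and $\alpha'_{\ell+c-2} = 2$. Since the first $\ell + \chat - 2$ steps of $\alpha'$ agree with $\alpha$, the recursion~\eqref{eq:recursion} produces identical values $x_{1,j}$ for $0 \leq j \leq \ell-1$ and $x_{2,j}$ for $0 \leq j \leq \chat - 1$ for both sequences; only the additional entries $x_{2,j}$ with $\chat \leq j \leq c-1$ are newly computed. In view of~\eqref{eq:delta-alpha}, since the $x_{1,j}$ are unchanged, to conclude $\delta(\alpha') \geq \delta(\alpha)$ it suffices to show $\beta(\alpha') \geq \beta(\alpha)$.

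To this end I will define $f(n) := \min_{j_1+j_2=n}(x_{2,j_1}+x_{2,j_2})$, evaluated along the $x_2$-sequence of $\alpha'$, and argue that $f$ is strictly increasing: given any minimizer $(j_1^*, j_2^*)$ of $f(n+1)$, we may assume \wolog $j_2^* \geq 1$, so the pair $(j_1^*, j_2^* - 1)$ sums to $n$ and Lemma~\ref{lemma:monotonicity} yields $f(n) \leq x_{2,j_1^*} + x_{2,j_2^* - 1} < x_{2,j_1^*} + x_{2,j_2^*} = f(n+1)$. Since the $x_2$-sequences of $\alpha$ and $\alpha'$ coincide on indices $0,\ldots,\chat-1$, strict monotonicity will then give $\beta(\alpha') = 1 + f(c-1) \geq 1 + f(\chat-1) = \beta(\alpha)$, finishing the case $\chat \geq 2$. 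For the base case $\chat = 1$, I will evaluate the recursion for the greedy sequence $\alpha = (1)^{\ell-1}\circ(2)^{c-1} \in W(\ell, c)$, which satisfies $\alpha_{\ell+c-2}=2$ whenever $c \geq 2$, to find $x_{1,j} = j$ and $x_{2,j} = j\ell$, hence $\delta(\alpha) = c$, so $\delta(c) \geq c \geq 1 = \delta(1)$. No step here is really hard; the only bookkeeping point, transparent because 2-steps are appended at the end, is that the entire relevant prefix of the recursion is preserved when passing from $\alpha$ to $\alpha'$, which is what makes $\beta$ directly comparable across the two sequences.
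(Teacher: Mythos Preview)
Your proof is correct and follows essentially the same approach as the paper: extend a strategy sequence by appending entries equal to $2$, observe that the sequence $x_1$ is unchanged while $\beta$ can only increase (hence $\delta$ does too), and handle $\chat=1$ separately. The paper carries this out one step at a time inside the joint induction with Lemmas~\ref{lemma:ub-delta-c} and~\ref{lemma:ub-k-ell-c-delta-c}, whereas you append all $c-\chat$ steps at once and make the monotonicity of $\beta$ explicit via your function $f$, giving a self-contained argument.
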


\begin{lemma}[Upper bound for $k^*(P_\ell,P_c)$ via $\delta(c)$] \label{lemma:ub-k-ell-c-delta-c}
For any $c\geq 1$ and any $\ell\geq 1$ we have $k^*(P_\ell,P_c)\leq \delta(c)\cdot\ell$.
\end{lemma}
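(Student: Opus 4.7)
The plan is to prove Lemma~\ref{lemma:ub-k-ell-c-delta-c} by induction on $c$, jointly with Lemmas~\ref{lemma:ub-delta-c} and~\ref{lemma:delta-monotonicity}. The base case $c=1$ is immediate: since a vertex colored~$2$ already forms a monochromatic $P_1$, Painter must use color~$1$ on every vertex, so $k^*(P_\ell,P_1)=\ell=\delta(1)\cdot\ell$. For the inductive step, fix $c\geq 2$ and assume all three lemmas already hold up to $c-1$; within the proof for index $c$, I establish Lemma~\ref{lemma:delta-monotonicity} at $c$ before Lemma~\ref{lemma:ub-k-ell-c-delta-c}, so I freely use $\delta(\chat)\leq \delta(c)$ for $\chat\leq c$ below.

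By Proposition~\ref{prop:kstar-P1-Pr} it suffices to show $k(\alpha)\leq \delta(c)\cdot\ell$ for every $\alpha\in W(\ell,c)$. Fix such an $\alpha$, let $\pi$ denote the position of the \emph{last} direction-$2$ step in $\alpha$, and set $\ell_0:=\pi-c+2$. The prefix $\tilde\alpha:=(\alpha_1,\dots,\alpha_\pi)\in W(\ell_0,c)$ ends with a direction-$2$ step, hence $\delta(\tilde\alpha)\leq \delta(c)$ by the definition of $\delta(c)$. All remaining $\ell-\ell_0$ steps of $\alpha$ lie in direction~$1$, and inspecting~\eqref{eq:recursion} shows that setting $x_{1,\ell}:=k(\alpha)$ is consistent with an extension of the recursion: for all $\nu\in\{\ell_0,\ell_0+1,\dots,\ell\}$ one has
\begin{equation*}
  x_{1,\nu}=\beta(\alpha)+\min_{\substack{j_1,j_2\geq 0\\ j_1+j_2=\nu-1}}(x_{1,j_1}+x_{1,j_2}),
\end{equation*}
the pure recursion considered in Lemma~\ref{lemma:periodicity}.

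The heart of the argument is the strengthened inequality
\begin{equation*}
  x_{1,\nu}\leq \delta(c)\cdot\nu\qquad \text{for all } \nu\in\{0,1,\dots,\ell\},
\end{equation*}
which applied at $\nu=\ell$ immediately yields $k(\alpha)=x_{1,\ell}\leq \delta(c)\cdot\ell$. I prove this by strong induction on $\nu$. The case $\nu=0$ is trivial. For $1\leq \nu\leq \ell_0-1$ (the \emph{preparation} range), observe that $x_{1,\nu}=k(\alpha^{\text{pre}})$ for the prefix $\alpha^{\text{pre}}$ of $\tilde\alpha$ ending at the step that sets this value; this prefix lies in $W(\nu,c')$ with $c'\leq c-1$ (since the unique last direction-$2$ step of $\tilde\alpha$ still lies ahead), so applying the inductive hypothesis of Lemma~\ref{lemma:ub-k-ell-c-delta-c} at $c'\leq c-1$ together with Lemma~\ref{lemma:delta-monotonicity} gives $x_{1,\nu}\leq \delta(c')\cdot\nu\leq \delta(c)\cdot\nu$. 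For $\ell_0\leq \nu\leq \ell$ (the \emph{payoff} range), I plug $(j_1,j_2)=(p,\nu-1-p)$ into the minimum, where $p\in\argmin_{0\leq j\leq \ell_0-1}\frac{x_{1,j}+\beta(\alpha)}{j+1}$ realises the exact equality $x_{1,p}+\beta(\alpha)=\delta(\tilde\alpha)(p+1)$ by definition of $p$. Combining this with the strong inductive hypothesis at the strictly smaller index $\nu-1-p$ yields
\begin{equation*}
  x_{1,\nu}\leq \beta(\alpha)+x_{1,p}+x_{1,\nu-1-p}=\delta(\tilde\alpha)(p+1)+x_{1,\nu-1-p}\leq \delta(\tilde\alpha)(p+1)+\delta(c)(\nu-1-p)\leq \delta(c)\nu,
\end{equation*}
where the last step uses $\delta(\tilde\alpha)\leq \delta(c)$.

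The main obstacle is picking the right inductive claim: bounding $x_{1,\ell-1}$ and $\beta(\alpha)$ separately fails, because $\beta(\alpha)$ can itself grow linearly in $\ell$, so their sum cannot be controlled summand-wise. The trick is to bundle them into the single bound $x_{1,\nu}\leq \delta(c)\cdot\nu$, and to exploit the exact equality $x_{1,p}+\beta(\alpha)=\delta(\tilde\alpha)(p+1)$ supplied by the minimizer $p$: this cancels exactly the $\beta(\alpha)$ contributed by one application of the recursion, allowing the inductive hypothesis on the smaller index $\nu-1-p$ to close the argument without accumulating additive error.
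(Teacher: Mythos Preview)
Your proof is correct and follows essentially the same plan as the paper: joint induction on $c$ with Lemmas~\ref{lemma:ub-delta-c} and~\ref{lemma:delta-monotonicity}, decomposition of $\alpha$ into a prefix $\tilde\alpha\in W(\ell_0,c)$ ending in a direction-$2$ step followed by a tail of direction-$1$ steps, bounding the preparation range $\nu\leq\ell_0-1$ via the induction hypothesis on $c$ (since the relevant prefix lies in some $W(\nu,c')$ with $c'\leq c-1$), and handling the payoff range by exploiting the minimizer $p$ of $(x_{1,j}+\beta)/(j+1)$.

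The one noteworthy difference is that the paper invokes Lemma~\ref{lemma:periodicity} to pass from $x_{1,\ell}$ down to $x_{1,\hat\ell}$ in a single jump of $m(p+1)$ steps (writing $\ell=\hat\ell+m(p+1)$ with $\hat\ell\leq\ell_0-1$ and using the growth bound~\eqref{eq:growth-upper-bound}), whereas you instead prove the stronger claim $x_{1,\nu}\leq\delta(c)\cdot\nu$ for all $\nu$ by strong induction, reducing $\nu$ by $p+1$ one step at a time via the trivial bound $x_{1,\nu}\leq\beta+x_{1,p}+x_{1,\nu-1-p}$. Your version is a modest streamlining: it avoids the periodicity lemma entirely and makes the argument self-contained, at the cost of an extra layer of (strong) induction. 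Both routes hinge on the identity $x_{1,p}+\beta=\delta(\tilde\alpha)(p+1)$ and on Lemma~\ref{lemma:delta-monotonicity} to absorb $\delta(\tilde\alpha)$ and $\delta(c')$ into $\delta(c)$.
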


\begin{proof}[Proof of Lemma~\ref{lemma:ub-delta-c}, \ref{lemma:delta-monotonicity} and \ref{lemma:ub-k-ell-c-delta-c}]

We argue by induction on $c$. For $c=1$ all claims are trivially satisfied. For the induction step let $c\geq 2$.

\textit{Induction step for Lemma~\ref{lemma:ub-delta-c}.}
For any fixed $\ell\geq 1$ consider an arbitrary fixed strategy sequence $\alpha\in W(\ell,c)$ with $\alpha_{\ell+c-2}=2$. Note that $\alpha$ can be uniquely written in the form
\begin{equation} \label{eq:alpha-parametrization}
  \alpha=(1)^{\ell_1-1}\circ (2)\circ (1)^{\ell_2-\ell_1}\circ (2)\circ \cdots \circ(1)^{\ell_{c-1}-\ell_{c-2}}\circ(2) \enspace,
\end{equation}
where $1\leq \ell_1\leq \ell_2\leq \cdots\leq\ell_{c-2}\leq \ell_{c-1}=\ell$.

In the following we derive upper bounds for the entries of the sequences $x_1$ and $x_2$ defined in \eqref{eq:recursion} for this $\alpha$.
For any $1\leq j\leq c-1$ we define $\alpha^{(j)}$ as the maximal prefix of $\alpha$ containing exactly $j-1$ entries equal to 2. By \eqref{eq:alpha-parametrization} we have $\alpha^{(j)}\in W(\ell_j,j)$. Moreover, by the definitions in \eqref{eq:recursion} and \eqref{eq:d-k-alpha} and by Proposition~\ref{prop:kstar-P1-Pr} we have
\begin{equation} \label{eq:x2j-kstar}
  x_{2,j}=k_{(\ell_j-1)+(j-1)}=k(\alpha^{(j)}) \leBy{eq:kstar-P1-Pr} k^*(P_{\ell_j},P_j) \enspace, \qquad 1\leq j\leq c-1 \enspace.
\end{equation}
By induction and Lemma~\ref{lemma:ub-k-ell-c-delta-c} we hence obtain from \eqref{eq:x2j-kstar} that
\begin{equation} \label{eq:x2j-deltaj}
  x_{2,j} \leq \delta(j)\cdot\ell_j \enspace, \quad 1\leq j\leq c-1 \enspace.
\end{equation}
Using that by \eqref{eq:recursion} and \eqref{eq:alpha-parametrization} the integers $x_{1,\ell_j-1}$ and $x_{2,j}$ correspond to sequence elements $k_i$ and $k_{i'}$ as defined in \eqref{eq:k-i} with $i<i'$, we obtain with Lemma~\ref{lemma:monotonicity} that
\begin{equation} \label{eq:x1ellj-deltaj}
  x_{1,\ell_j-1}\leq x_{2,j}-1 \leBy{eq:x2j-deltaj} \delta(j)\cdot\ell_j-1 \enspace, \quad 1\leq j\leq c-1 \enspace.
\end{equation}
By setting $j_1=\lfloor(c-1)/2\rfloor$ and  $j_2=\lceil(c-1)/2\rceil$ in \eqref{eq:beta-alpha} we obtain
\begin{equation} \label{eq:beta-alpha-x2}
  \beta(\alpha) \leBy{eq:beta-alpha} 1+x_{2,\lfloor(c-1)/2\rfloor}+x_{2,\lceil(c-1)/2\rceil} \leq 1+2x_{2,\lceil(c-1)/2\rceil} \leBy{eq:x2j-deltaj} 1+2\delta({\textstyle \left\lceil\frac{c-1}{2}\right\rceil})\cdot\ell_{\lceil(c-1)/2\rceil} \enspace,
\end{equation}
where we again used Lemma~\ref{lemma:monotonicity} in the second estimate.
Similarly, setting $j=\ell_{\lceil(c-1)/2\rceil}-1$ in \eqref{eq:delta-alpha} yields
\begin{equation} \label{eq:delta-alpha-ub}
  \delta(\alpha) \leBy{eq:delta-alpha} \frac{x_{1,\ell_{\lceil(c-1)/2\rceil}-1}+\beta(\alpha)}{\ell_{\lceil(c-1)/2\rceil}}
                 \leByM{\eqref{eq:x1ellj-deltaj},\eqref{eq:beta-alpha-x2}} 3\cdot\delta({\textstyle \left\lceil\frac{c-1}{2}\right\rceil}) \enspace.
\end{equation}
As the bound in \eqref{eq:delta-alpha-ub} holds for all $\ell\geq 1$ and all strategy sequences $\alpha\in W(\ell,c)$ with $\alpha_{\ell+c-2}=2$ simultaneously, we obtain with the definition in \eqref{eq:delta-c} that
\begin{equation*}
  \delta(c) \leq 3\cdot\delta({\textstyle \lceil\frac{c-1}{2}\rceil})
  \leq 3\cdot {\textstyle \lceil\frac{c-1}{2}\rceil}^{\log_2(3)}
  \leq 3\cdot \big({\textstyle \frac{c}{2}}\big)^{\log_2(3)} = c^{\log_2(3)} \enspace,
\end{equation*}
where the second estimate is the induction hypothesis. This completes the proof of Lemma~\ref{lemma:ub-delta-c}.

\textit{Induction step for Lemma~\ref{lemma:delta-monotonicity}.}
By induction we have $\delta(1)\leq \cdots \leq \delta(c-1)$, so it suffices to show that $\delta(c-1)\leq \delta(c)$ (by Lemma~\ref{lemma:ub-delta-c} we already know that $\delta(c)$ is a well-defined finite value). For $c=2$, note that the strategy sequence $\alpha=(2)\in W(1,2)$ yields $\beta(\alpha)=2$ and $\delta(\alpha)=2$ and consequently guarantees a lower bound of $\delta(2)\geq 2$, implying in particular that $\delta(1)\leq \delta(2)$ (recall~\eqref{eq:delta-1}).
For $c\geq 3$ we argue as follows: For each strategy sequence $\alpha^-\in W(\ell,c-1)$ with $\alpha_{\ell+(c-1)-2}^-=2$ consider the extended sequence $\alpha:=\alpha^-\circ(2) \in W(\ell,c)$. By Lemma~\ref{lemma:monotonicity} and \eqref{eq:beta-alpha} we have $\beta(\alpha^-)<\beta(\alpha)$, which by \eqref{eq:delta-alpha} implies that $\delta(\alpha^-)<\delta(\alpha)$. Using \eqref{eq:delta-c} this shows that $\delta(c-1)\leq\delta(c)$, completing the proof of Lemma~\ref{lemma:delta-monotonicity}.

\textit{Induction step for Lemma~\ref{lemma:ub-k-ell-c-delta-c}.}
For the reader's convenience, Figure~\ref{fig:subsequences} illustrates the notations used in this proof.

Let $\ell\geq 1$ and $\alpha\in W(\ell,c)$ be fixed. We show that for $k(\alpha)$ as defined in \eqref{eq:recursion} and \eqref{eq:d-k-alpha} we have $k(\alpha)\leq \delta(c)\cdot \ell$, from which the claim follows by Proposition~\ref{prop:kstar-P1-Pr}.
For the proof it is convenient to extend the sequences $x_1=(x_{1,0},\ldots,x_{1,\ell-1})$ and $x_2=(x_{2,0},\ldots,x_{2,c-1})$ defined in \eqref{eq:recursion} for the given $\alpha$ by setting 
\begin{equation} \label{eq:x1-extend}
  x_{1,\ell}:=k_d \eqBy{eq:k-alpha} k(\alpha)
\end{equation}
with $d=d(\ell,c)$ defined in \eqref{eq:d}.
Let $\ell'$ be such that $\alpha=\alpha'\circ(1)^{\ell-\ell'}$ with $\alpha'\in W(\ell',c)$ and $\alpha'_{\ell'+c-2}=2$.
Fixing some integer
\begin{equation} \label{eq:p-argmin}
  p\in \argmin_{0\leq j\leq \ell'-1} \frac{x_{1,j}+\beta(\alpha')}{j+1} \enspace,
\end{equation}
where $\beta(\alpha')$ is defined in \eqref{eq:beta-alpha}, we have
\begin{equation} \label{eq:growth-rate-bound}
  \frac{x_{1,p}+\beta(\alpha')}{p+1} \eqByM{\eqref{eq:delta-alpha},\eqref{eq:p-argmin}} \delta(\alpha') \leBy{eq:delta-c} \delta(c) \enspace.
\end{equation}
Let $\ellhat\leq \ell'-1$ be the largest integer such that
\begin{equation} \label{eq:ell-p+1}
  \ell = \ellhat+m(p+1)
\end{equation}
for some integer $m$.

\begin{figure}
\centering
\PSforPDF{
 \psfrag{col1}{Color 1}
 \psfrag{col2}{Color 2}
 \psfrag{1}{1}
 \psfrag{ell}{$\ell=\ellhat+m(p+1)$}
 \psfrag{ellp}{$\ell'$}
 \psfrag{ellh}{$\ellhat$}
 \psfrag{c}{$c$}
 \psfrag{ch}{$\chat$}
 \psfrag{p}{$p$}
 \psfrag{ppo}{$p+1$}
 \psfrag{alpha}{$\alpha\in W(\ell,c)$}
 \psfrag{alphap}{$\alpha'\in W(\ell',c)$}
 \psfrag{alphah}{$\alphahat\in W(\ellhat,\chat\,)$}
 \includegraphics{graphics/subsequences.eps}
}
\caption{Notations used in the proof of Lemma~\ref{lemma:ub-k-ell-c-delta-c}.} \label{fig:subsequences}
\end{figure}

By \eqref{eq:recursion}, \eqref{eq:x1-extend} and the definition of $\beta(\alpha')$ in \eqref{eq:beta-alpha} we have
\begin{equation*}
  x_{1,\nu}=\beta(\alpha')+\min_{\substack{j_1,j_2\geq 0: \\ j_1+j_2=\nu-1}}(x_{1,j_1}+x_{1,j_2}) \enspace,
  \qquad \ell'\leq\nu\leq\ell \enspace.
\end{equation*}
We may hence apply Lemma~\ref{lemma:periodicity}, and using \eqref{eq:p-argmin} we obtain that
\begin{equation} \label{eq:x1ell-x1p}
  x_{1,\ell} \eqBy{eq:ell-p+1} x_{1,\ellhat+m(p+1)} \leBy{eq:growth-upper-bound} x_{1,\ellhat}+m(x_{1,p}+\beta(\alpha')) \enspace.
\end{equation}

If $\ellhat\geq 1$, we let $\chat\,$ denote the maximal value of $\bar{c}$ for which $W(\ellhat,\bar{c})$ contains a prefix of $\alpha'$, and we let $\alphahat\in W(\ellhat,\chat\,)$ denote the corresponding prefix (see Figure~\ref{fig:subsequences}).
Clearly we have $\chat<c$ and
\begin{equation} \label{eq:k-alphahat-x1-ellhat}
  k(\alphahat) \eqByM{\eqref{eq:recursion},\eqref{eq:d-k-alpha}} x_{1,\ellhat} \enspace.
\end{equation}
As $\chat<c$ we may apply the induction hypothesis and obtain together with Proposition~\ref{prop:kstar-P1-Pr} that
\begin{equation*}
  k(\alphahat) \leBy{eq:kstar-P1-Pr} k^*(P_\ellhat\,,P_\chat) \leq \delta(\chat\,)\cdot \ellhat \enspace,
\end{equation*}
which combined with \eqref{eq:k-alphahat-x1-ellhat} and Lemma~\ref{lemma:delta-monotonicity} yields
\begin{equation} \label{eq:x-1-ellhat}
  x_{1,\ellhat} \leq \delta(c)\cdot\ellhat \enspace.
\end{equation}
If $\ellhat=0$, then \eqref{eq:x-1-ellhat} holds trivially (both sides of this inequality are equal to zero).

Combining our previous observations we obtain
\begin{equation*}
  k(\alpha) \leByM{\eqref{eq:x1-extend}, \eqref{eq:x1ell-x1p}} x_{1,\ellhat}+m(x_{1,p}+\beta(\alpha'))
       \leByM{\eqref{eq:growth-rate-bound}, \eqref{eq:x-1-ellhat}} \delta(c)\cdot (\ellhat+m(p+1))
       \eqBy{eq:ell-p+1} \delta(c)\cdot \ell \enspace,
\end{equation*}
completing the proof of Lemma~\ref{lemma:ub-k-ell-c-delta-c}.
\end{proof}

It remains to prove the lower bound in Theorem~\ref{thm:kstar-Pell-Pc-explicit}.

\begin{proof}[Proof of Theorem~\ref{thm:kstar-Pell-Pc-explicit} (lower bound)]
As in the proof of Lemma~\ref{lemma:ub-k-ell-c-delta-c} it is also convenient here to extend the definition in \eqref{eq:x-sigma} for any $\alpha\in W(\ell,c)$ by setting
\begin{equation} \label{eq:x1-extend'}
  x_{1,\ell}:=k_d \eqBy{eq:k-alpha} k(\alpha)
\end{equation}
with $d=d(\ell,c)$ defined in \eqref{eq:d}.
Note that the claim holds trivially if $c=1$, so we consider a fixed $c\geq 2$ in the following.
By the definition in \eqref{eq:delta-c} there are families $(\ell_t)_{t\geq 0}$ and $(\alpha^{(t)})_{t\geq 0}$, where $\alpha^{(t)}\in W(\ell_t,c)$ with $\alpha^{(t)}_{\ell_t+c-2}=2$, satisfying
\begin{equation} \label{eq:lim-delta-alpha-t}
  \lim_{t\rightarrow\infty} \delta(\alpha^{(t)})=\delta(c) \enspace.
\end{equation}
Fix any such strategy sequence $\alpha^{(t)}$, and for every $\ell\geq \ell_t$ consider the extended sequence $\alphahat^{(t)}:=\alpha^{(t)}\circ(1)^{\ell-\ell_t} \in W(\ell,c)$. Using \eqref{eq:beta-alpha} we obtain that for any such extended sequence $\alphahat^{(t)}$, the sequence $x_1=(x_{1,0},\ldots,x_{1,\ell})$ defined in \eqref{eq:recursion} and \eqref{eq:x1-extend'} satisfies
\begin{equation*}
  x_{1,\nu}=\beta(\alpha^{(t)})+\min_{\substack{j_1,j_2\geq 0: \\ j_1+j_2=\nu-1}}(x_{1,j_1}+x_{1,j_2}) \enspace,
  \qquad \ell_t\leq\nu\leq\ell \enspace.
\end{equation*}
Moreover, by \eqref{eq:x1-extend'} we have $k(\alphahat^{(t)})=x_{1,\ell}$. By the first part of Lemma~\ref{lemma:periodicity} and the definition in \eqref{eq:delta-alpha} we thus have
\begin{equation*}
  k(\alphahat^{(t)}) = (\delta(\alpha^{(t)})+o(1))\cdot\ell
\end{equation*}
for $c$ and $t$ fixed and $\ell\rightarrow\infty$ (recall that \eqref{eq:growth-upper-bound} holds with equality for all large enough indices).
Combining this with \eqref{eq:lim-delta-alpha-t} and applying Proposition~\ref{prop:kstar-P1-Pr} yields that
\begin{equation*}
  k^*(P_c, P_\ell) \geq (\delta(c)+o(1))\cdot\ell\enspace
\end{equation*}
for $c$ fixed and $\ell\to\infty$. Moreover, the upper bound given by Lemma~\ref{lemma:ub-k-ell-c-delta-c} shows that the term $o(1)$ must be non-positive.
\end{proof}

\subsection{Proof of Theorem~\texorpdfstring{\ref{thm:delta-1to6}}{6}} \label{sec:exact-values} 

In this section we derive the exact values of $\delta(c)$ stated in Theorem~\ref{thm:delta-1to6} by carrying out explicitly the optimization over lattice walks that appears in the definition \eqref{eq:delta-c}. Note that for small values of $c$, the walk corresponding to a strategy sequence $\alpha\in W(\ell,c)$ has only few turning points. We will derive upper bounds for the entries of the sequences $x_1$ and $x_2$ computed by the recursion \eqref{eq:recursion} as a function of the $1$-coordinates of these turning points. To do so we will only consider a few carefully selected terms when evaluating the minimization in~\eqref{eq:k-i}. The upper bound on $\delta(c)$ we obtain this way is the minimum over a small number of functions of $1$-coordinates of turning points, and turns out to be irrational. We will derive asymptotically matching lower bounds by describing families of strategy sequences for which the ratios between the $1$-coordinates of successive turning points approximate the optimal (irrational!) ratios.

% journal version
% We only give the proof for $c=4$ here, and omit the (similar but more complicated) proofs for $c\in\{5,6\}$.

% arxiv version
We give the proof for $c=4$ here, and defer the (similar but more complicated) proof for $c\in\{5,6\}$ to the appendix.

\begin{proof}[Proof of $\delta(4)=\frac{1}{2}(\sqrt{13}+5)$]
Consider a strategy sequence $\alpha\in W(\ell,4)$ with $\alpha_{\ell+4-2}=2$, and note that $\alpha$ can be uniquely written in the form
\begin{equation*}
  \alpha=(1)^{\ell_1-1}\circ (2)\circ (1)^{\ell_2-\ell_1}\circ (2)\circ (1)^{\ell-\ell_2} \circ (2) \enspace,
\end{equation*}
where $1\leq \ell_1\leq \ell_2\leq \ell$.

Let $p\geq 1$ and $0\leq m<\ell_1$ be the unique integers satisfying
\begin{equation} \label{eq:para-ell2-c4}
  \ell_2=p \ell_1+m \enspace.
\end{equation}
The recursion \eqref{eq:recursion} yields by straightforward calculations that
\begin{equation} \label{eq:delta4-x1x2}
\begin{split}
  x_{1,j} &= j \enspace, \quad 0\leq j\leq \ell_1-1 \enspace, \qquad \text{(*)} \\
  x_{2,1} &= \ell_1 \enspace, \\
  x_{1,2\ell_1-1} &\leq x_{1,\ell_1-1}+x_{1,\ell_1-1}+x_{2,1}+1 = 3\ell_1-1 \enspace, \\
  x_{1,3\ell_1-1} &\leq x_{1,2\ell_1-1}+x_{1,\ell_1-1}+x_{2,1}+1 \leq 5\ell_1-1 \enspace, \\
  &\ldots \\
  x_{1,p \ell_1-1} &\leq x_{1,(p-1)\ell_1-1}+x_{1,\ell_1-1}+x_{2,1}+1 \leq 2p \ell_1-\ell_1-1 \enspace, \qquad \text{(**)} \\
  x_{1,\ell_2-1} &\leq x_{1,p \ell_1-1}+x_{1,m-1}+x_{2,1}+1 \leq 2p \ell_1+m-1 \eqBy{eq:para-ell2-c4} 2\ell_2-m-1 \enspace, \qquad \text{(***)} \\
  x_{2,2} &\leq  x_{1,p \ell_1-1}+x_{1,m}+x_{2,1}+1 \leq 2p \ell_1+m \eqBy{eq:para-ell2-c4} 2\ell_2-m \enspace,
\end{split}
\end{equation}
where a priori the inequality marked with~(**) holds only if $p\geq 2$ (as $x_{1,(p-1)\ell_1-1}$ is undefined otherwise). For $p=1$ the resulting inequality reads $x_{1,\ell_1-1}\leq\ell_1-1$, which is true nevertheless, as a comparison with~(*) shows. Similarly, a priori the inequality marked with~(***) holds only if $m\geq 1$ (as $x_{1,m-1}$ is undefined otherwise). For $m=0$ the resulting inequality reads $x_{1,\ell_2-1}\leq 2p\ell_1-1=2\ell_2-1$, which is true nevertheless, as a comparison with~(**) shows.

Defining
\begin{equation} \label{eq:mu}
  \mu:=m/\ell_1
\end{equation}
we thus obtain
\begin{align}
  \delta(\alpha) &\leByM{\eqref{eq:beta-alpha},\eqref{eq:delta-alpha}}
                  \min \left\{ \frac{x_{1,p \ell_1-1}+x_{2,1}+x_{2,2}+1}{p \ell_1},
                               \frac{x_{1,\ell_2-1}+x_{2,1}+x_{2,2}+1}{\ell_2} \right\} \notag \\
  &\leByM{\eqref{eq:delta4-x1x2},\eqref{eq:mu}} \min \left\{ 4+\frac{\mu}{p}, 4+\frac{1-2\mu}{p+\mu} \right\} \enspace. \label{eq:delta4-bound}
\end{align}
In order to determine the best bound resulting from this analysis, we need to find the maximum of the function on the right side of \eqref{eq:delta4-bound}. Relaxing this problem to a maximization problem with the integer variable $p\in\{1,2,\ldots\}$, and the \emph{real-valued} variable $0\leq \mu<1$ (cf.\ the definition in \eqref{eq:mu} and recall that $m\in\mathbb{N}$ is chosen such that $m<\ell_1$), it is easy to see that the right hand side of \eqref{eq:delta4-bound} attains its maximum for
\begin{equation} \label{eq:p-mu-sol}
  p=1 \qquad \text{and} \qquad \mu=\frac{1}{2}(\sqrt{13}-3) \enspace,
\end{equation}
corresponding to a ratio $\ell_2/\ell_1=p+\mu=\frac{1}{2}(\sqrt{13}-1)$ and yielding
\begin{equation} \label{eq:alpha4-bound}
  \delta(\alpha) \leByM{\eqref{eq:delta4-bound},\eqref{eq:p-mu-sol}} \frac{1}{2}(\sqrt{13}+5) \enspace.
\end{equation}
As the bound in \eqref{eq:alpha4-bound} holds for all $\ell\geq 1$ and all $\alpha\in W(\ell,4)$ with $\alpha_{\ell+4-2}=2$ simultaneously, we obtain with the definition in \eqref{eq:delta-c} that
\begin{equation*}
  \delta(4) \leq \frac{1}{2}(\sqrt{13}+5) \enspace.
\end{equation*}
To show that this upper bound is tight, by \eqref{eq:delta-c} it suffices to specify a family of strategy sequences $(\alpha^{(t)})_{t\geq 0}$, where $\alpha^{(t)}\in W(\ell_t,4)$ for some $\ell_t\geq 1$ and $\alpha_{\ell_t+4-2}^{(t)}=2$, with $\lim_{t\rightarrow\infty} \delta(\alpha^{(t)})=\frac{1}{2}(\sqrt{13}+5)$. Define
\begin{equation*}
  \alpha^{(t)}:=(1)^{\ell_{1,t}-1}\circ(2)\circ(1)^{\ell_{2,t}-\ell_{1,t}}\circ(2,2) \in W(\ell_{2,t},4)
\end{equation*}
for all $t\geq 0$, where $\ell_{1,t}:=10^t$ and $\ell_{2,t}:=\lfloor \frac{1}{2}(\sqrt{13}-1)\cdot 10^t\rfloor$, i.e., we have
\begin{equation} \label{eq:delta4-lim-ell-ratio}
  \lim_{t\rightarrow\infty} \frac{\ell_{2,t}}{\ell_{1,t}} = \frac{1}{2}(\sqrt{13}-1)\enspace.
\end{equation}
For each such strategy sequence $\alpha^{(t)}$ we obtain from \eqref{eq:recursion}, using that $\ell_{2,t}<2\ell_{1,t}$,
\begin{equation} \label{eq:delta4-x1x2-alpha-t}
\begin{split}
  x_{1,j} &= j \enspace, \quad 0\leq j\leq \ell_{1,t}-1 \enspace, \\
  x_{2,1} &= \ell_{1,t} \enspace, \\
  x_{1,j} &= \ell_{1,t}+j \enspace, \quad \ell_{1,t}\leq j\leq \ell_{2,t}-1 \enspace, \\
  x_{2,2} &= \ell_{1,t}+\ell_{2,t} \enspace, \\
  x_{2,3} &= 2\ell_{1,t}+\ell_{2,t} \enspace, \\
\end{split}
\end{equation}
implying that
\begin{equation} \label{eq:delta4-beta-alpha-t}
  \beta(\alpha^{(t)}) \eqByM{\eqref{eq:beta-alpha},\eqref{eq:delta4-x1x2-alpha-t}} 2\ell_{1,t}+\ell_{2,t}+1
\end{equation}
and
\begin{equation} \label{eq:delta4-delta-alpha-t}
  \delta(\alpha^{(t)}) \eqByM{\eqref{eq:delta-alpha},\eqref{eq:delta4-x1x2-alpha-t},\eqref{eq:delta4-beta-alpha-t}} \min \left\{ 3+\frac{\ell_{2,t}}{\ell_{1,t}} , 2+3\Big(\frac{\ell_{2,t}}{\ell_{1,t}}\Big)^{-1} \right\} \enspace.
\end{equation}
Using \eqref{eq:delta4-lim-ell-ratio} it follows from \eqref{eq:delta4-delta-alpha-t} that $\delta(\alpha^{(t)})\rightarrow\frac{1}{2}(\sqrt{13}+5)$ as $t\rightarrow\infty$.
\end{proof}

\subsection{Lower bound for \texorpdfstring{$\delta(c)$}{d(c)} via `bootstrapping'} \label{sec:lb-delta}

In the previous section, we determined asymptotically optimal strategy sequences for the asymmetric $(P_\ell, P_4)$-avoidance game. We now show how these can be `bootstrapped' to derive good strategy sequences for the asymmetric $(P_\ell, P_{c})$-avoidance game with any fixed $c$ of form $c=4^t$. 

Specifically, we construct these strategy sequences by nesting scaled copies of nearly-optimal strategy sequences for the $(P_\ell, P_4)$-avoidance game into each other. The lattice walks corresponding to these strategy sequences thus have a self-similar structure (see Figure~\ref{fig:bootstrapping}). As we will see, the equations arising in the analysis of this construction are, up to some error terms, exactly the same as in the proof that $\delta(4)=\frac{1}{2}(\sqrt{13}+5)$ in the previous section.

\begin{lemma}[Lower bound for $\delta(c)$ via `bootstrapping'] \label{lemma:bootstrapping}
For any integer $t\geq 0$, the function $\delta(c)$ defined in \eqref{eq:def-delta-c} satisfies
\begin{equation*}
  \delta(4^t)\geq \big({\delta(4)}\big)^t \;\eqByM{\text{Thm.~\ref{thm:delta-1to6}}}\; \left(\textstyle{\frac{1}{2}(\sqrt{13}+5)}\right)^t \enspace.
\end{equation*}
\end{lemma}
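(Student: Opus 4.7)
I proceed by induction on $t$. The base case $t=0$ is immediate from $\delta(1)=1$ (recall~\eqref{eq:delta-1}). For the inductive step, the plan is to construct strategy sequences whose lattice walks contain nested copies of the near-optimal $c=4$ walk from the proof of Theorem~\ref{thm:delta-1to6} at different scales, mirroring the self-similar structure advertised in Section~\ref{sec:main-ideas} (see Figure~\ref{fig:bootstrapping}).

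Concretely, I will construct, for every $s \geq 0$, a strategy sequence $\gamma_s \in W(L_s, 3^{s+1}+1)$ with $\gamma_{s,\,\text{last}}=2$ and $\delta(\gamma_s) \to \delta(4)^{s+1}$ as suitable scale parameters diverge. The base $\gamma_0$ is the nearly-optimal $c=4$ strategy from Section~\ref{sec:exact-values}, namely $(1)^{\ell_1-1}\circ(2)\circ(1)^{\ell_2-\ell_1}\circ(2,2)$ with $\ell_2/\ell_1$ close to the optimal irrational ratio $\tfrac{1}{2}(\sqrt{13}-1)$. Inductively, given $\gamma_s$, I form $\gamma_{s+1}$ by taking an outer $c=4$ template of the same shape (at a much larger scale) and substituting a copy of $\gamma_s$ in place of each of the three $(2)$ entries of the template, inserting additional color-$1$ padding so the horizontal offsets match the outer ratio. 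Since $\gamma_s$ contains $3^{s+1}$ color-$2$ entries, $\gamma_{s+1}$ contains $3 \cdot 3^{s+1} = 3^{s+2}$ such entries and so lies in $W(L_{s+1}, 3^{s+2}+1)$ as claimed. Once $\delta(\gamma_s) \geq \delta(4)^{s+1}$ is established for all $s$, setting $s := t-1$ together with $3^t+1 \leq 4^t$ and monotonicity (Lemma~\ref{lemma:delta-monotonicity}) yields
\[
  \delta(4^t) \;\geq\; \delta(3^t+1) \;\geq\; \delta(4)^t
\]
for every $t \geq 1$, which is the desired bound.

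The technical heart of the plan is to prove the multiplicative step $\delta(\gamma_{s+1}) \geq \delta(4) \cdot \delta(\gamma_s) - o(1)$. By choosing the outer parameters $\ell_1 \gg L_s$ and $\ell_2 \approx \tfrac{1}{2}(\sqrt{13}-1)\ell_1$, the outer color-$1$ segments will dominate the embedded inner copies. Inside each embedded copy of $\gamma_s$, the recursion~\eqref{eq:recursion} behaves like the stand-alone recursion for $\gamma_s$, shifted by an additive constant inherited from the accumulated state at the entry point; between these copies, Lemma~\ref{lemma:periodicity} governs the growth of $x_{1,\nu}$ along the long color-$1$ stretches at asymptotic rate $\delta(\gamma_s)$. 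Consequently the entire calculation from the proof that $\delta(4)=\tfrac{1}{2}(\sqrt{13}+5)$ (equations \eqref{eq:delta4-x1x2}--\eqref{eq:delta4-bound}) transfers to this nested setting verbatim, \emph{with the unit $1$ (the per-step growth of $x_1$ in the stand-alone $c=4$ analysis) replaced throughout by $\delta(\gamma_s)$}. Optimizing the resulting two-variable minimum at $p=1,\ \mu=\tfrac{1}{2}(\sqrt{13}-3)$ as in~\eqref{eq:p-mu-sol} produces exactly the extra factor $\delta(4)$ on top of $\delta(\gamma_s)$.

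The main obstacle I anticipate is the bookkeeping that validates this rescaling: one must show that each of the three embedded $\gamma_s$-copies produces $x_2$-values (and subsequent $x_1$ growth rates) matching the stand-alone $\gamma_s$-values up to a uniform additive shift, even though the three copies are embedded at different positions along the outer walk. The right tool, I expect, is a dominance/shift lemma stating that if at some step the current profile $(x_{1,\cdot},x_{2,\cdot})$ pointwise exceeds a constant-shifted copy of the stand-alone $\gamma_s$-profile, then this domination propagates through the remaining entries of an appended $\gamma_s$-block. Granted such a lemma, the outer analysis closes exactly as in~\eqref{eq:delta4-bound}, and the induction goes through.
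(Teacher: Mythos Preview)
Your high-level intuition (bootstrap by nesting the near-optimal $c=4$ walk at larger and larger scales) is exactly what the paper does, but the specific construction you propose is oriented the wrong way and your key claim about growth rates fails.

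Concretely, you place the long outer $(1)$-stretch \emph{before} the first embedded copy of $\gamma_s$. Along that first stretch no $2$-steps have been taken yet, so $x_{1,j}=j$ for $j<\ell_1^{\text{outer}}$; the per-step growth of $x_1$ there is $1$, not $\delta(\gamma_s)$. After the first embedded $\gamma_s$ (with $\ell_1^{\text{outer}}\gg L_s$), one computes $\beta\approx 3\,\ell_1^{\text{outer}}$, and the minimizer in Lemma~\ref{lemma:periodicity} sits near $j\approx \ell_1^{\text{outer}}$ with $x_{1,j}\approx \ell_1^{\text{outer}}$, giving a rate of roughly $(1+3)\cdot 1=4$ on the second outer stretch, not $\delta(\gamma_s)\approx 4.302$. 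So the sentence ``between these copies, Lemma~\ref{lemma:periodicity} governs the growth of $x_{1,\nu}$ \ldots\ at asymptotic rate $\delta(\gamma_s)$'' is false as stated, and the analogy ``unit $1$ replaced throughout by $\delta(\gamma_s)$'' does not go through. Your proposed dominance/shift lemma is also off: a uniform additive shift does not propagate through the recursion (each step of~\eqref{eq:k-i} adds the shift again, so it grows rather than stays constant), which is why the three embedded copies do \emph{not} behave like translated stand-alone runs of $\gamma_s$.

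The paper avoids all of this by nesting in the opposite order: $\alpha^{(t-1)}$ is used once as a \emph{prefix} of $\alpha^{(t)}$, and the outer $c=4$ template is then appended with its single $2$-entries replaced by \emph{pure blocks} $(2)^{c_{t-1}}$ and $(2)^{2c_{t-1}}$ (no embedded $1$-steps, no repeated copies of the inner walk). Because the inner walk comes first, Lemma~\ref{lemma:periodicity} applied at the end of the prefix certifies that the very first long outer $(1)$-stretch already grows at rate $\delta(\alpha^{(t-1)})$, and the subsequent homogeneous $2$-blocks introduce no cross-talk in the $x_2$-minimisation. The outer computation then really is the $c=4$ calculation with the unit $1$ replaced by $\delta(\alpha^{(t-1)})$, yielding $\delta(\alpha^{(t)})\ge f(q,s)\cdot\delta(\alpha^{(t-1)})$ with $f(q,s)\to\delta(4)$. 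If you reorganise your construction so that the inner walk is a prefix and the outer $2$'s are expanded to consecutive $2$-blocks rather than embedded copies, your argument becomes the paper's.
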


Note that together with the monotonicity guaranteed by Lemma~\ref{lemma:delta-monotonicity}, Lemma~\ref{lemma:bootstrapping} shows that $\delta(c)=\Omega(c^{\log_4(\delta(4))})=\Omega(c^{1.052\ldots})$ as a function of $c$, proving the bound claimed in Theorem~\ref{thm:delta-c-bounds}.

\begin{remark}
Similar lower bound statements can be proven by bootstrapping asymptotically optimal strategy sequences for the $(P_\ell,P_5)$- or the $(P_\ell,P_6)$-avoidance game. The resulting exponent of $c$ is marginally better for the case $(P_\ell,P_5)$ but worse for the case $(P_\ell,P_6)$: Using the values stated in Theorem~\ref{thm:delta-1to6} we obtain $\log_4(\delta(4))=\log_4(\frac{1}{2}(\sqrt{13}+5))=1.052\ldots$, $\log_5(\delta(5))=\log_5(\frac{1}{2}(\sqrt{24}+6))=1.053\ldots$, and $\log_6(\delta(6))=\log_6(\frac{1}{2}(\sqrt{37}+7))=1.048\ldots$.
\end{remark}

\begin{proof} [Proof of Lemma~\ref{lemma:bootstrapping}]

\begin{figure}
\centering
\PSforPDF{
 \psfrag{col1}{Color 1}
 \psfrag{col2}{Color 2}
 \psfrag{ctm1}{$c_{t-1}$}
 \psfrag{2ctm1}{$2c_{t-1}$}
 \psfrag{3ctm1}{$3c_{t-1}$}
 \psfrag{ct}{$c_t=4c_{t-1}$}
 \psfrag{l2tm1}{$\ell_{2,t-1}$}
 \psfrag{l1t}{$\ell_{1,t}=s\ell_{2,t-1}$}
 \psfrag{l2t}{$\ell_{2,t}=q\ell_{1,t}=sq\cdot\ell_{2,t-1}$}
 \psfrag{ldots}{$\ldots$}
 \psfrag{1}{$1$}
 \psfrag{alphatm1}{$\alpha^{(t-1)}$}
 \psfrag{alphat}{$\alpha^{(t)}\in W(\ell_{2,t},c_t)$}
 \psfrag{xo}{$x_{1,j} \,,\; 0\leq j\leq \ell_{1,t}-1$}
 \psfrag{x12}{$x_{1,j} \,,\; \ell_{1,t}\leq j\leq \ell_{2,t}-1$}
 \psfrag{x21}{$x_{2,j} \,,\; 0\leq j\leq c_{t-1}-1$}
 \psfrag{x22}{$x_{2,j} \,,\; c_{t-1}\leq j\leq 2c_{t-1}-1$}
 \psfrag{x23}{\parbox{3.5cm}{$x_{2,j}$ \\ $2c_{t-1}\leq j\leq 3c_{t-1}-1$}}
 \psfrag{x24}{\parbox{3.5cm}{$x_{2,j}$ \\ $3c_{t-1}\leq j\leq c_t-1$}}
 \includegraphics[scale=0.93]{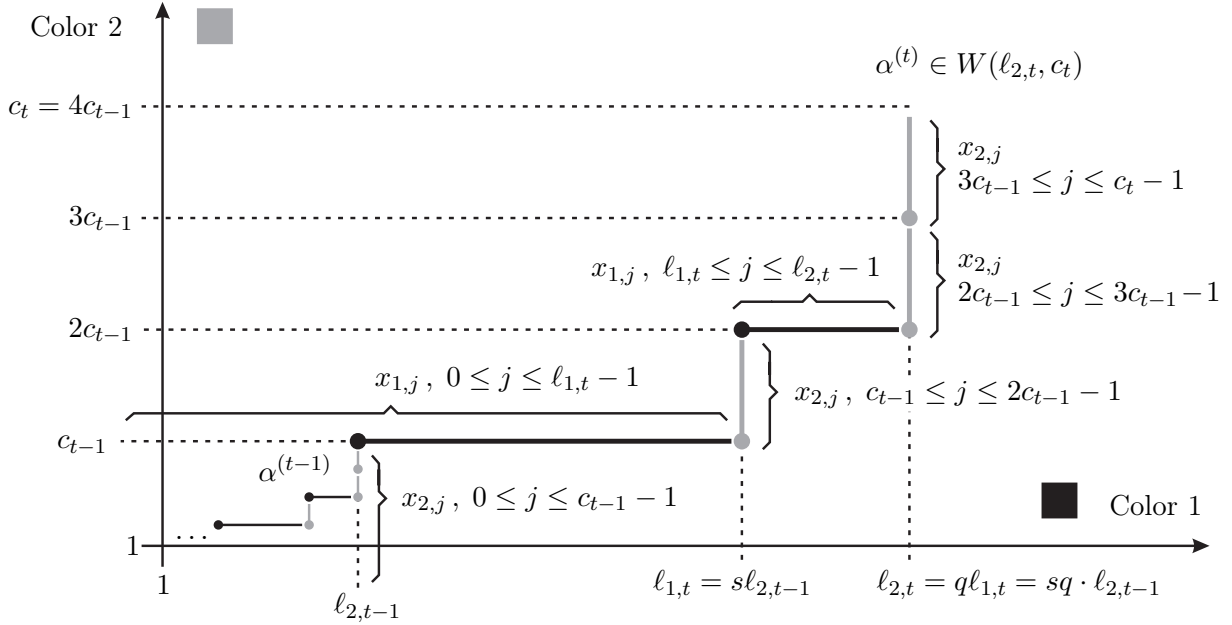}
}
\caption{Notations used in the proof of Lemma~\ref{lemma:bootstrapping}.} \label{fig:bootstrapping}
\end{figure}

In the following we specify a family of finite strategy sequences $(\alpha^{(t)})_{t\geq 0}$ such that $\alpha^{(t)}$ is a prefix of $\alpha^{(t+1)}$ for all $t\geq 0$. This defines an infinite sequence $\alpha$, and we denote by $x_1$ and $x_2$ the sequences defined in \eqref{eq:recursion} for this $\alpha$.

Let $q$ be a rational number with
\begin{equation*}
  1.3\leq q< \frac{1}{2}(\sqrt{13}-1)=1.302\ldots \enspace,
\end{equation*}
and let $s\geq 100$ be an integer such that $sq\in\NN$. We will consider these parameters fixed throughout the proof; at the very end we will take the limit $q\to \frac{1}{2}(\sqrt{13}-1)$ and $s\to\infty$.

We define
\begin{subequations} \label{eq:def-strategy}
\begin{equation}
  \ell_{1,0}:=1 \enspace, \quad \ell_{2,0}:=1 \enspace, \quad c_0:=1 \enspace, \quad \alpha^{(0)}:=() \in W(\ell_{2,0},c_0) \enspace,
\end{equation}
and for every $t\geq 1$,
\begin{equation} \label{eq:parameters-t}
  \ell_{1,t}:=s\ell_{2,t-1} \enspace, \quad
  \ell_{2,t}:=q\ell_{1,t}=s q \cdot\ell_{2,t-1} \enspace, \quad
  c_t:=4c_{t-1}=4^t
\end{equation}
and
\begin{equation}
  \alpha^{(t)}:=\alpha^{(t-1)}\circ(1)^{\ell_{1,t}-\ell_{2,t-1}}\circ(2)^{c_{t-1}}\circ(1)^{\ell_{2,t}-\ell_{1,t}}\circ(2)^{2c_{t-1}}\in W(\ell_{2,t},c_t) \enspace.
\end{equation}
\end{subequations}
For the reader's convenience those definitions are illustrated in Figure~\ref{fig:bootstrapping}.

By the definition in \eqref{eq:delta-alpha} we clearly have
\begin{equation} \label{eq:delta-alpha-0}
  \delta(\alpha^{(0)}) = 1 \enspace.
\end{equation}
We proceed by deriving lower bounds for $\delta(\alpha^{(t)})$, $t\geq 1$. For $t$ fixed, let
\begin{equation} \label{eq:p-argmin-t}
  p\in \argmin_{0\leq j\leq \ell_{2,t-1}-1} \frac{x_{1,j}+\beta(\alpha^{(t-1)})}{j+1} \enspace,
\end{equation}
where $\beta(\alpha^{(t-1)})$ is defined in \eqref{eq:beta-alpha}.
By Lemma~\ref{lemma:periodicity} we have
\begin{equation*}
  \frac{x_{1,p+k(p+1)}-x_{1,p}}{k(p+1)} \geBy{eq:growth-lower-bound} \frac{x_{1,p}+\beta(\alpha^{(t-1)})}{p+1}
                                        \eqByM{\eqref{eq:delta-alpha},\eqref{eq:p-argmin-t}} \delta(\alpha^{(t-1)})
\end{equation*}
for all $k\geq 1$ with $p+k(p+1)\leq \ell_{1,t}-1$. As the entries of the sequence $x_1$ are non-negative and increasing (recall Lemma~\ref{lemma:monotonicity}) this implies
\begin{subequations} \label{eq:x1x2-alpha-t}
\begin{equation} \label{eq:x1j-lb}
  x_{1,j} \geq \delta(\alpha^{(t-1)})\cdot(j-2p)\geq \delta(\alpha^{(t-1)})\cdot(j-2\ell_{2,t-1}+2) \enspace, \quad 0\leq j\leq \ell_{1,t}-1 \enspace,
\end{equation}
where we used that $p\leq \ell_{2,t-1}-1$ in the second estimate.
Using \eqref{eq:x1j-lb} and the trivial bound
\begin{equation}
  x_{2,j} \geq 0 \enspace, \quad 0\leq j\leq c_{t-1}-1 \enspace,
\end{equation}
we obtain from \eqref{eq:recursion}, using that $\ell_{2,t}<2\ell_{1,t}$,
\begin{equation}
\begin{split}
  x_{2,j} &\geq \delta(\alpha^{(t-1)})\cdot(\ell_{1,t}-4\ell_{2,t-1}+3) \enspace, \quad c_{t-1}\leq j\leq 2c_{t-1}-1 \enspace, \\
  x_{1,j} &\geq \delta(\alpha^{(t-1)})\cdot(\ell_{1,t}+j-8\ell_{2,t-1}+6) \enspace, \quad \ell_{1,t}\leq j\leq \ell_{2,t}-1 \enspace, \\
  x_{2,j} &\geq \delta(\alpha^{(t-1)})\cdot(\ell_{1,t}+\ell_{2,t}-8\ell_{2,t-1}+6) \enspace, \quad 2c_{t-1}\leq j\leq 3c_{t-1}-1 \enspace, \\
  x_{2,j} &\geq \delta(\alpha^{(t-1)})\cdot(2\ell_{1,t}+\ell_{2,t}-12\ell_{2,t-1}+9) \enspace, \quad 3c_{t-1}\leq j\leq 4c_{t-1}-1 \enspace, \\
\end{split}
\end{equation}
\end{subequations}
where we ignored the summand $+1$ arising from \eqref{eq:k-i} in all these lower bounds.

It follows that
\begin{equation} \label{eq:beta-alpha-t}
  \beta(\alpha^{(t)}) \geByM{\eqref{eq:beta-alpha},\eqref{eq:x1x2-alpha-t}} \delta(\alpha^{(t-1)})(2\ell_{1,t}+\ell_{2,t}-12\ell_{2,t-1})
\end{equation}
(where we ignored the summand $+1$ from \eqref{eq:beta-alpha} and the summand $+9\delta(\alpha^{(t-1)})$ from \eqref{eq:x1x2-alpha-t}).
It is readily checked that for the lower bounds given in \eqref{eq:x1x2-alpha-t} and \eqref{eq:beta-alpha-t}, the minimum in \eqref{eq:delta-alpha} is attained either for $j=\ell_{1,t}-1$ or for $j=\ell_{2,t}-1$, yielding
\begin{equation} \label{eq:delta-alpha-t}
\begin{split}
  \delta(\alpha^{(t)}) &\geByM{\eqref{eq:delta-alpha},\eqref{eq:x1x2-alpha-t},\eqref{eq:beta-alpha-t}}
  \delta(\alpha^{(t-1)})\cdot \min \left\{\frac{3\ell_{1,t}+\ell_{2,t}-14\ell_{2,t-1}+1}{\ell_{1,t}}, \frac{3\ell_{1,t}+2\ell_{2,t}-20\ell_{2,t-1}+5}{\ell_{2,t}} \right\} \\
  &\geBy{eq:parameters-t}
  \delta(\alpha^{(t-1)})\cdot \underbrace{\min \left\{ 3+q-\frac{14}{s}, 2+3q^{-1}-\frac{20}{qs} \right\}}_{=:f(q,s)}
  =\delta(\alpha^{(t-1)})\cdot f(q,s) \enspace.
  \end{split}
\end{equation}
(Note the similarities between \eqref{eq:delta4-x1x2-alpha-t}, \eqref{eq:delta4-beta-alpha-t}, \eqref{eq:delta4-delta-alpha-t} and \eqref{eq:x1x2-alpha-t}, \eqref{eq:beta-alpha-t}, \eqref{eq:delta-alpha-t}, respectively.)

Combining \eqref{eq:delta-alpha-0} and \eqref{eq:delta-alpha-t} and using the definition in \eqref {eq:delta-c} we thus have 
\begin{equation} \label{eq:delta-a-t}
  \delta(c_t)\geq\delta(\alpha^{(t)}) \geq \big( f(q,s)\big)^t
\end{equation}
for all $t\geq 0$.
Observing that for $q\to \frac{1}{2}(\sqrt{13}-1)$ and $s\to\infty$ we have
\begin{equation*}
  f(q,s)\to \frac{1}{2}(\sqrt{13}+5)=\delta(4)
\end{equation*}
(recall Theorem~\ref{thm:delta-1to6}), we obtain that for any $t\geq 0$ we have
\begin{equation*}
  \delta(4^t)=\delta(c_t)\geq \big(\delta(4)\big)^t \enspace.
\end{equation*}
\end{proof}

\subsection{Lower bound for \texorpdfstring{$k^*(P_\ell,2)$}{k*(Pl,2)}}
\label{sec:lb-symmetric}

An extension of the construction in the previous section finally yields the lower bound on $k^*(P_\ell,2)$ claimed in Theorem~\ref{thm:kstar-Pell-2}.

\begin{proof}[Proof of Theorem~\ref{thm:kstar-Pell-2} (lower bound)]
We will reuse most of the analysis from the previous proof for the fixed parameters
\begin{equation} \label{eq:values-q-s}
  q:=1.3 \enspace, \qquad s:=320 \enspace.
\end{equation}

Note that for $q$ and $s$ as in $\eqref{eq:values-q-s}$ and any $t\geq 0$, the analysis of the strategy sequence $\alpha^{(t)} \in W(\ell_{2,t},c_t)$ defined in~\eqref{eq:def-strategy} yields that
\begin{equation} \label{eq:fqs-concrete}
  \delta(\alpha^{(t)}) \geBy{eq:delta-a-t} \big( f(q,s)\big)^t\geq \left(\frac{17}{4}\right)^t\enspace.
\end{equation}

For $t\geq 0$, we now set
\begin{equation} \label{eq:ellhat-t}
  \ellhat_t:=10\ell_{2,t} \eqBy{eq:parameters-t} 10\cdot (sq)^t \eqBy{eq:values-q-s} 10\cdot 416^t
\end{equation}
and extend $\alpha^{(t)}\in W(\ell_{2,t},c_t)$ to a strategy sequence $\alphahat^{(t)}\in W(\ellhat_t,\ellhat_t)$ by defining
\begin{equation*}
  \alphahat^{(t)}:=\alpha^{(t)}\circ (1)^{\ellhat_t-\ell_{2,t}} \circ (2)^{\ellhat_t-c_t} \enspace.
\end{equation*}

Similarly to the previous proof, we obtain that the sequences $x_1$ and $x_2$ defined for $\alphahat^{(t)}$ in~\eqref{eq:recursion} satisfy
\begin{equation*}
  x_{1,j} \geq \delta(\alpha^{(t)})\cdot(j-2\ell_{2,t}+2) \enspace, \quad 0\leq j\leq \ellhat_t-1 \enspace, \\
\end{equation*}
and
\begin{equation*}
  x_{2,j} \geq \delta(\alpha^{(t)})\cdot (k-1)(\ellhat_t-4\ell_{2,t}+3) \enspace, \quad (k-1)c_t\leq j\leq kc_t-1 \enspace,
\end{equation*}
for every integer $1\leq k\leq \ellhat_t/c_t$. Putting everything together, we obtain
\begin{align}
  k(\alphahat^{(t)}) &\geq \delta(\alpha^{(t)})\cdot \frac{\ellhat_t}{c_t} \cdot \Big(\ellhat_t-4\ell_{2,t}+3\Big) \notag \\
                &\geq  \Big(1-4\frac{\ell_{2,t}}{\ellhat_t}\Big) \cdot \frac{\delta(\alpha^{(t)})}{c_t} \cdot \ellhat_t\,\!^2 \notag \\
                &\geByM{\eqref{eq:parameters-t},\eqref{eq:fqs-concrete},\eqref{eq:ellhat-t}}  0.6 \cdot \Big(\frac{17}{16}\Big)^t \cdot \ellhat_t\,\!^2 \notag \\
                &\geBy{eq:ellhat-t}0.6 \cdot \Big(\frac{\ellhat_t}{10}\Big)^{\log_{416}(17/16)} \cdot \ellhat_t\,\!^2
                \geq 0.5 \cdot \ellhat_t\,\!^{2.01} \label{eq:k-alpha-t+} \enspace.
\end{align}

Applying Proposition~\ref{prop:kstar-P1-Pr} we obtain from \eqref{eq:k-alpha-t+} that $k^*(P_\ell,2)=k^*(P_\ell,P_\ell)=\Omega(\ell^{2.01})$, proving the claimed lower bound.
\end{proof}

\begin{remark}
Observe that when performing the analysis of the previous proof with the scaling factor $s$ as a variable (and $q$ near $\frac{1}{2}(\sqrt{13}-1)$ fixed), we obtain an improvement of
\begin{equation*}
  \log_{sq} \big(f(s,q)/4\big)
\end{equation*}
over $2$ in the exponent of $\ellhat_t$ in \eqref{eq:k-alpha-t+}. The choice of $s:=320$ in~\eqref{eq:values-q-s} roughly maximizes this gain.
\end{remark}

\subsection{Putting everything together}

In this last section we complete the proofs of Theorem~\ref{thm:delta-c-bounds} and Theorem~\ref{thm:kstar-Pell-2} by collecting our findings from throughout the paper. 

\begin{proof}[Proof of Theorem~\ref{thm:delta-c-bounds}]
As already mentioned, the upper bound follows by combining Theorem~\ref{thm:kstar-Pell-Pc-explicit} with Lemma~\ref{lemma:ub-delta-c}, observing that $\log_2(3)=1.584\ldots <1.59$. The lower bound follows by combining Theorem~\ref{thm:kstar-Pell-Pc-explicit} with Lemma~\ref{lemma:bootstrapping}, using the monotonicity guaranteed by Lemma~\ref{lemma:delta-monotonicity} and observing that $\log_4(\delta(4))=\log_4(\frac{1}{2}(\sqrt{13}+5))=1.052\ldots >1.05$.
\end{proof}

\begin{proof}[Proof of Theorem~\ref{thm:kstar-Pell-2}]
As already mentioned, the upper bound follows immediately by combining Theorem~\ref{thm:kstar-Pell-Pc} with the upper bound on $\delta(c)$ stated in Theorem~\ref{thm:delta-c-bounds}, using the non-negativity of the $o(1)$ term in Theorem~\ref{thm:kstar-Pell-Pc}. The proof of the lower bound was given in Section~\ref{sec:lb-symmetric}.
\end{proof}

\bibliographystyle{plain}
\bibliography{refs}

\appendix
\section*{Appendix}

We provide the missing part of the proof of Theorem~\ref{thm:delta-1to6} (see Section~\ref{sec:exact-values}).
% journal version
% This is included for completeness and not intended for publication.

\begin{proof}[Proof of $\delta(5)=\sqrt{6}+3=\frac{1}{2}(\sqrt{24}+6)$ and $\delta(6)=\frac{1}{2}(\sqrt{37}+7)$]
The proof is similar to the proof of $\delta(4)=\frac{1}{2}(\sqrt{13}+5)$ given in Section~\ref{sec:exact-values}, but involves some slightly more technical calculations.

Consider a strategy sequence $\alpha\in W(\ell,c)$ with $c\in\{5,6\}$ and $\alpha_{\ell+c-2}=2$, and note that $\alpha$ can be uniquely written in the form
\begin{equation*}
  \alpha=(1)^{\ell_1-1}\circ (2)\circ (1)^{\ell_2-\ell_1}\circ (2)\circ (1)^{\ell_3-\ell_2}\circ (2)\circ \alpha' \enspace,
\end{equation*}
where $1\leq \ell_1\leq \ell_2\leq \ell_3\leq \ell$.

Let $p_2\geq 1$ and $0\leq m_2<\ell_1$ be the unique integers satisfying
\begin{equation*}
  \ell_2=p_2 \ell_1+m_2 \enspace.
\end{equation*}

If $p_2=1$ then let $p_3\geq 1$ and $0\leq m_3<\ell_1$ denote the unique integers satisfying
\begin{equation} \label{eq:para-ell3-1}
  \ell_3=p_3 \ell_1+m_3
\end{equation}
(note that $p_3=1$ implies that $m_3\geq m_2$).
 
If $p_2\geq 2$ then let $q_3\geq 1$, $0\leq p_3<p_2$ and $0\leq m_3<\ell_1$ denote the unique integers satisfying
\begin{equation} \label{eq:para-ell3-2}
  \ell_3=q_3 p_2 \ell_1+p_3 \ell_1+m_3
\end{equation}
(note that $q_3=1$ and $p_3=0$ implies that $m_3\geq m_2$).

If $p_2=1$, then we obtain from \eqref{eq:recursion}, reusing the results from \eqref{eq:delta4-x1x2},
\begin{equation} \label{eq:x1x2-p2-eq-1}
\begin{split}
  x_{1,j} &= j \enspace, \quad 0\leq j\leq \ell_1-1 \enspace, \qquad \text{(*)} \\
  x_{2,1} &= \ell_1 \enspace, \\
  x_{1,\ell_2-1} &\leq 2\ell_2-m_2-1 \\
  x_{2,2} &\leq 2\ell_1+m_2 = 2\ell_2-m_2 \enspace, \\
  x_{1,\ell_1+m_3-1} &\leq x_{1,\ell_1-1}+x_{1,m_3-1}+(2-\indic{m_2\geq m_3})x_{2,1}+1 = (3-\indic{m_2\geq m_3})\ell_1+m_3-1 \enspace, \quad \text{(**)} \\
  x_{1,\ell_1+m_3} &\leq x_{1,\ell_1-1}+x_{1,m_3}+(2-\indic{m_2>m_3})x_{2,1}+1 = (3-\indic{m_2>m_3})\ell_1+m_3 \enspace, \\
  x_{1,2\ell_1-1} &\leq x_{1,\ell_1-1}+x_{1,\ell_1-1}+2x_{2,1}+1 = 4\ell_1-1 \enspace, \\
  x_{1,2\ell_1+m_3-1} &\leq x_{1,\ell_1+m_3-1}+x_{1,\ell_1-1}+2x_{2,1}+1 \leq (6-\indic{m_2\geq m_3})\ell_1+m_3-1 \enspace, \\
  x_{1,2\ell_1+m_3} &\leq x_{1,\ell_1+m_3}+x_{1,\ell_1-1}+2x_{2,1}+1 \leq (6-\indic{m_2>m_3})\ell_1+m_3 \enspace, \\
  x_{1,3\ell_1-1} &\leq x_{1,2\ell_1-1}+x_{1,\ell_1-1}+2x_{2,1}+1 \leq 7\ell_1-1 \enspace, \\
  x_{1,3\ell_1+m_3-1} &\leq x_{1,2\ell_1+m_3-1}+x_{1,\ell_1-1}+2x_{2,1}+1 \leq (9-\indic{m_2\geq m_3})\ell_1+m_3-1 \enspace, \\
  x_{1,3\ell_1+m_3} &\leq x_{1,2\ell_1+m_3}+x_{1,\ell_1-1}+2x_{2,1}+1 \leq (9-\indic{m_2>m_3})\ell_1+m_3 \enspace, \\
  &\ldots \\
  x_{1,p_3 \ell_1-1} &\leq x_{1,(p_3-1)\ell_1-1}+x_{1,\ell_1-1}+2x_{2,1}+1 \leq (3p_3-2)\ell_1-1 \enspace, \quad \text{(***)} \\
  x_{1,\ell_3-1} &\leq x_{1,(p_3-1)\ell_1+m_3-1}+x_{1,\ell_1-1}+2x_{2,1}+1 \leq (3p_3-\indic{m_2\geq m_3})\ell_1+m_3-1 \\
                 &\hspace{67mm}\eqBy{eq:para-ell3-1} 3\ell_3-2m_3-\indic{m_2\geq m_3}\ell_1-1 \enspace, \quad \text{(****)} \\
  x_{2,3} &\leq x_{1,(p_3-1)\ell_1+m_3}+x_{1,\ell_1-1}+2x_{2,1}+1  \leq (3p_3-\indic{m_2>m_3})\ell_1+m_3 \\
                 &\hspace{67mm}\eqBy{eq:para-ell3-1} 3\ell_3-2m_3-\indic{m_2>m_3}\ell_1 \enspace.
\end{split}
\end{equation}
A priori the inequality marked with (**) holds only if $m_3\geq 1$ (as $x_{1,m_3-1}$ is undefined otherwise), but for $m_3=0$ the resulting inequality is true nevertheless, as a comparison with (*) shows. Similarly, the inequality resulting from~(***) holds even if $p_3=1$, and the inequality resulting from~(****) holds even if $p_3=1$ and $m_3=0$.

Defining
\begin{equation} \label{eq:mu23}
  \mu_2:=m_2/\ell_1 \qquad \text{and} \qquad \mu_3:=m_3/\ell_1
\end{equation}
and combining the results from \eqref{eq:x1x2-p2-eq-1} yields
\begin{subequations} \label{eq:bounds-delta5-p2-eq-1}
\begin{align}
  \frac{x_{1,\ell_1-1}+2x_{2,2}+1}{\ell_1} &\leq 5+2\mu_2 \enspace, \label{eq:ell-5-b11} \\
  \frac{x_{1,\ell_2-1}+2x_{2,2}+1}{\ell_2} &\leq 6-\frac{3\mu_2}{1+\mu_2} \enspace, \label{eq:ell-5-b12} \\
  \frac{x_{1,p_3 \ell_1-1}+2x_{2,2}+1}{p_3 \ell_1} &\leq 3+\frac{2+2\mu_2}{p_3} \enspace, \label{eq:ell-5-b13} \\
  \frac{x_{1,p_3 \ell_1-1}+x_{2,1}+x_{2,3}+1}{p_3 \ell_1} &\leq 6+\frac{\mu_3-1-\indic{\mu_2>\mu_3}}{p_3} \enspace, \label{eq:ell-5-b14} \\
  \frac{x_{1,\ell_3-1}+x_{2,1}+x_{2,3}+1}{\ell_3} &\leq 6+\frac{1-4\mu_3-\indic{\mu_2\geq \mu_3}-\indic{\mu_2>\mu_3}}{p_3+\mu_3} \label{eq:ell-5-b15}
\end{align}
\end{subequations}
and
\begin{subequations} \label{eq:bounds-delta6-p2-eq-1}
\begin{align}
  \frac{x_{1,\ell_2-1}+x_{2,2}+x_{2,3}+1}{\ell_2} &\leq 4+\frac{3p_3-2\mu_2+\mu_3-\indic{\mu_2>\mu_3}}{1+\mu_2} \enspace, \label{eq:ell-6-b12} \\
  \frac{x_{1,p_3 \ell_1-1}+x_{2,2}+x_{2,3}+1}{p_3 \ell_1} &\leq 6+\frac{\mu_2+\mu_3-\indic{\mu_2>\mu_3}}{p_3} \enspace, \label{eq:ell-6-b13} \\
  \frac{x_{1,\ell_3-1}+x_{2,2}+x_{2,3}+1}{\ell_3} &\leq 6+\frac{2+\mu_2-4\mu_3-\indic{\mu_2\geq\mu_3}-\indic{\mu_2>\mu_3}}{p_3+\mu_3} \enspace. \label{eq:ell-6-b14}
\end{align}
\end{subequations}

If $p_2\geq 2$, then we obtain from \eqref{eq:recursion}, reusing the results from \eqref{eq:delta4-x1x2},
\begin{subequations} \label{eq:x1x2-p2-geq-2}
\begin{equation}
\begin{split}
  x_{1,j} &= j \enspace, \quad 0\leq j\leq \ell_1-1 \enspace, \qquad \text{(*)} \\
  x_{2,1} &= \ell_1 \enspace, \\
  x_{1,2\ell_1-1} &\leq 3\ell_1-1 \enspace, \\
  x_{1,3\ell_1-1} &\leq 5\ell_1-1 \enspace, \\
  &\ldots \\
  x_{1,p_3 \ell_1-1} &\leq 2p_3\ell_1-\ell_1-1 \enspace, \qquad \text{(**)} \\
  x_{1,p_3 \ell_1+m_3-1} &\leq x_{1,p_3 \ell_1-1}+x_{1,m_3-1}+x_{2,1}+1 \leq 2p_3 \ell_1+m_3-1 \enspace, \qquad \text{(***)} \\  
  x_{1,p_3 \ell_1+m_3} &\leq x_{1,p_3 \ell_1-1}+x_{1,m_3}+x_{2,1}+1 \leq 2p_3 \ell_1+m_3 \enspace, \qquad \text{(****)} \\  
  x_{1,p_2 \ell_1-1} &\leq 2p_2 \ell_1-\ell_1-1 \enspace, \qquad \text{(+)} \\
  x_{1,\ell_2-1} &\leq 2\ell_2-m_2-1 \enspace, \\
  x_{2,2} &\leq 2p_2 \ell_1+m_2 \leq 2\ell_2-m_2 \enspace, \\
  x_{1,2p_2 \ell_1-1} &\leq 2x_{1,p_2 \ell_1-1}+2x_{2,1}+1 \leq 4p_2 \ell_1-1 \enspace, \\
  x_{1,3p_2 \ell_1-1} &\leq x_{1,2p_2 \ell_1-1}+x_{1,p_2 \ell_1-1}+2x_{2,1}+1 \leq 6p_2 \ell_1+\ell_1-1 \enspace, \\
  x_{1,4p_2 \ell_1-1} &\leq x_{1,3p_2 \ell_1-1}+x_{1,p_2 \ell_1-1}+2x_{2,1}+1 \leq 8p_2 \ell_1+2\ell_1-1 \enspace, \hspace{33mm} \\
  &\ldots \\
\end{split}
\end{equation}
% split this huge set of equations into two parts to make the pagebreak look nicer
% the two parts have (a) and (b) subequation numbers
\begin{equation} 
\begin{split}
  x_{1,q_3 p_2 \ell_1-1} &\leq x_{1,(q_3-1)p_2 \ell_1-1}+x_{1,p_2 \ell_1-1}+2x_{2,1}+1 \leq 2q_3 p_2 \ell_1+(q_3-2)\ell_1-1 \enspace, \quad \text{(++)} \\
  x_{1,q_3 p_2 \ell_1+\ell_1-1} &\leq x_{1,q_3 p_2 \ell_1-1}+x_{1,\ell_1-1}+2x_{2,1}+1 \leq 2q_3 p_2 \ell_1+(q_3+1)\ell_1-1 \enspace, \\
  x_{1,q_3 p_2 \ell_1+2\ell_1-1} &\leq x_{1,q_3 p_2 \ell_1-1}+x_{1,2\ell_1-1}+2x_{2,1}+1 \leq 2q_3 p_2 \ell_1+(q_3+3)\ell_1-1 \enspace, \\
  &\ldots \\
  x_{1,q_3 p_2 \ell_1+p_3 \ell_1-1} &\leq x_{1,q_3 p_2 \ell_1-1}+x_{1,p_3 \ell_1-1}+2x_{2,1}+1 \leq 2(q_3 p_2 \ell_1+p_3 \ell_1)+(q_3-1)\ell_1-1 \enspace, \quad \text{(+++)} \\
  x_{1,\ell_3-1} &\leq x_{1,q_3 p_2 \ell_1-1}+x_{1,p_3 \ell_1+m_3-1}+2x_{2,1}+1 \leq 2(q_3 p_2 \ell_1+p_3 \ell_1)+q_3 \ell_1+m_3 - 1 \\
                 &\hspace{67mm}\eqBy{eq:para-ell3-2} 2 \ell_3+q_3 \ell_1-m_3-1 \enspace, \quad \text{(++++)} \\
  x_{2,3} &\leq x_{1,q_3 p_2 \ell_1-1}+x_{1,p_3 \ell_1+m_3}+2x_{2,1}+1 \leq 2(q_3 p_2 \ell_1+p_3 \ell_1)+q_3 \ell_1+m_3 \\
          &\hspace{67mm}\eqBy{eq:para-ell3-2} 2 \ell_3+q_3 \ell_1-m_3 \enspace.
\end{split}
\end{equation}
\end{subequations}
Note that the inequalities resulting from~(***) and (****) hold even if $p_3=0$ (in this case $x_{1,p_3 \ell_1-1}$ is not defined), as a comparison with~(*) shows. Similarly, (***) holds even if $m_3=0$ (compare with~(**)), (++) holds even if $q_3=1$ (compare with~(+)), (+++) holds even if $p_3=0$ (compare with~(++)) and~(++++) holds even if $p_3=0$ and $m_3=0$ (compare with~(+++)).

Using the definitions from \eqref{eq:mu23} and combining the results from \eqref{eq:x1x2-p2-geq-2} yields
\begin{subequations}
\begin{align}
% ##################################
% # Please keep those as comments! #
% ##################################
%  \frac{x_{1,p_2 \ell_1-1}+x_{2,1}+x_{2,3}+1}{p_2 \ell_1} &\leq 2+\frac{2(q_3 p_2+p_3)+q_3+\mu_3}{p_2} \enspace, \\
%  \frac{x_{1,\ell_2-1}+x_{2,1}+x_{2,3}+1}{\ell_2} &\leq 2+\frac{1+2(q_3 p_2+p_3)+q_3-\mu_2+\mu_3}{p_2+\mu_2} \enspace, \\
  \frac{x_{1,q_3 p_2 \ell_1+p_3 \ell_1-1}+x_{2,1}+x_{2,3}+1}{q_3 p_2 \ell_1+p_3 \ell_1} &\leq 4+\frac{2q_3+\mu_3}{q_3 p_2+p_3} \enspace, \label{eq:ell-5-b21} \\
  \frac{x_{1,\ell_3-1}+x_{2,1}+x_{2,3}+1}{\ell_3} &\leq 4+\frac{1+2q_3-2\mu_3}{q_3 p_2+p_3+\mu_3} \label{eq:ell-5-b22}
\end{align}
\end{subequations}
and
\begin{subequations}
\begin{align}
  \frac{x_{1,p_2 \ell_1-1}+x_{2,2}+x_{2,3}+1}{p_2 \ell_1} &\leq 4+\frac{2(q_3 p_2+p_3)+q_3+\mu_2+\mu_3-1}{p_2} \enspace, \label{eq:ell-6-b21} \\
  \frac{x_{1,\ell_2-1}+x_{2,2}+x_{2,3}+1}{\ell_2} &\leq  4+\frac{2(q_3 p_2+p_3)+q_3-2\mu_2+\mu_3}{p_2+\mu_2} \enspace, \label{eq:ell-6-b22} \\
% ##################################
% # Please keep those as comments! #
% ##################################
%  \frac{x_{1,q_3 p_2 \ell_1+p_3 \ell_1-1}+x_{2,2}+x_{2,3}+1}{q_3 p_2 \ell_1+p_3 \ell_1} &\leq 4+\frac{2q_3+2p_2+\mu_2+\mu_3-1}{q_3 p_2+p_3} \enspace, \\
  \frac{x_{1,\ell_3-1}+x_{2,2}+x_{2,3}+1}{\ell_3} &\leq 4+\frac{2q_3+2p_2+\mu_2-2\mu_3}{q_3 p_2+p_3+\mu_3} \enspace. \label{eq:ell-6-b23} 
\end{align}
\end{subequations}

We first consider the case $c=5$.
If $p_2=p_3=1$ then we obtain
\begin{align}
  \delta(\alpha) &\leByM{\eqref{eq:beta-alpha},\eqref{eq:delta-alpha}}
              \min \{ \eqref{eq:ell-5-b11}, \eqref{eq:ell-5-b12}, \eqref{eq:ell-5-b14}, \eqref{eq:ell-5-b15} \} \notag \\
  &\leByM{\eqref{eq:bounds-delta5-p2-eq-1}} \min \left\{ 5+2\mu_2, 6-\frac{3\mu_2}{1+\mu_2}, 5+\mu_3, 6+\frac{1-4\mu_3}{1+\mu_3} \right\} \enspace. \label{eq:delta5-bound}
\end{align}
In order to determine the best bound resulting from this analysis, we maximize the right hand side of \eqref{eq:delta5-bound} for $\mu_2,\mu_3\in\RR$ with $0\leq \mu_2<1$ and $0\leq \mu_3<1$ (cf.\ the definition in \eqref{eq:mu23} and recall that $m_2,m_3\in\mathbb{N}$ are chosen such that $m_2,m_3<\ell_1$). It is readily checked that the right hand side of \eqref{eq:delta5-bound} attains its maximum for
\begin{equation} \label{eq:delta5-mu23-sol}
  \mu_2=\frac{1}{2}(\sqrt{6}-2) \qquad \text{and} \qquad \mu_3=2\mu_2=\sqrt{6}-2
\end{equation}
(corresponding to the ratios $\ell_2/\ell_1=1+\mu_2=\frac{1}{2}\sqrt{6}$ and $\ell_3/\ell_1=1+\mu_3=\sqrt{6}-1$), yielding
\begin{equation} \label{eq:alpha5-bound-1}
  \delta(\alpha) \leByM{\eqref{eq:delta5-bound},\eqref{eq:delta5-mu23-sol}} \sqrt{6}+3 \enspace.
\end{equation}
If $p_2=1$ and $p_3\geq 2$ then we have
\begin{equation} \label{eq:alpha5-bound-2}
  \delta(\alpha) \leByM{\eqref{eq:beta-alpha},\eqref{eq:delta-alpha},\eqref{eq:ell-5-b13}} 3+\frac{2+2\mu_2}{p_3} < 3+\frac{2+2}{2}=5<\sqrt{6}+3 \enspace.
\end{equation}
For the remaining case $p_2\geq 2$ an easy calculation shows that
\begin{equation} \label{eq:alpha5-bound-3}
  \delta(\alpha) \leByM{\eqref{eq:beta-alpha},\eqref{eq:delta-alpha}} \min \{ \eqref{eq:ell-5-b21}, \eqref{eq:ell-5-b22} \}
  < 5.2 < \sqrt{6}+3 \enspace.
\end{equation}
As the bounds in \eqref{eq:alpha5-bound-1}, \eqref{eq:alpha5-bound-2} and \eqref{eq:alpha5-bound-3} hold for all $\ell\geq 1$ and all $\alpha\in W(\ell,5)$ with $\alpha_{\ell+5-2}=2$ simultaneously, we obtain with the definition in \eqref{eq:delta-c} that
\begin{equation*}
  \delta(5)\leq \sqrt{6}+3 \enspace.
\end{equation*}
To show that this upper bound is tight, by \eqref{eq:delta-c} it suffices to specify a family of strategy sequences $(\alpha^{(t)})_{t\geq 0}$, where $\alpha^{(t)}\in W(\ell_t,5)$ for some $\ell_t\geq 1$ and $\alpha^{(t)}_{\ell_t+5-2}=2$, with $\lim_{t\rightarrow\infty} \delta(\alpha^{(t)})=\sqrt{6}+3$. Define
\begin{equation*}
  \alpha^{(t)}:=(1)^{\ell_{1,t}-1}\circ(2)\circ(1)^{\ell_{2,t}-\ell_{1,t}}\circ(2)\circ(1)^{\ell_{3,t}-\ell_{2,t}}\circ(2,2) \in W(\ell_{3,t},5) \enspace,
\end{equation*}
for all $t\geq 0$, where $\ell_{1,t}:=10^t$, $\ell_{2,t}:=\lfloor \frac{1}{2}\sqrt{6}\cdot 10^t\rfloor$ and $\ell_{3,t}:=\lfloor (\sqrt{6}-1)\cdot 10^t\rfloor$, i.e., we have
\begin{equation} \label{eq:delta5-lim-ell-ratio}
  \lim_{t\rightarrow\infty} \frac{\ell_{2,t}}{\ell_{1,t}} = \frac{1}{2}\sqrt{6} \qquad \text{and} \qquad
  \lim_{t\rightarrow\infty} \frac{\ell_{3,t}}{\ell_{1,t}} = \sqrt{6}-1\enspace.
\end{equation}
For each such strategy sequence $\alpha^{(t)}$ we obtain from \eqref{eq:recursion}, using that $\ell_{2,t},\ell_{3,t}<2\ell_{1,t}$,
\begin{equation} \label{eq:delta5-x1x2-alpha-t}
\begin{split}
  x_{1,j} &= j \enspace, \quad 0\leq j\leq \ell_{1,t}-1 \enspace, \\
  x_{2,1} &= \ell_{1,t} \enspace, \\
  x_{1,j} &= \ell_{1,t}+j \enspace, \quad \ell_{1,t}\leq j\leq \ell_{2,t}-1 \enspace, \\
  x_{2,2} &= \ell_{1,t}+\ell_{2,t} \enspace, \\
  x_{1,j} &= 2\ell_{1,t}+j \enspace, \quad \ell_{2,t}\leq j\leq \ell_{3,t}-1 \enspace, \\
  x_{2,3} &= 2\ell_{1,t}+\ell_{3,t} \enspace, \\
  x_{2,4} &= 2\ell_{1,t}+\ell_{2,t}+\ell_{3,t} \enspace, \\
\end{split}
\end{equation}
implying that
\begin{equation} \label{eq:delta5-beta-alpha-t}
  \beta(\alpha^{(t)}) \eqByM{\eqref{eq:beta-alpha},\eqref{eq:delta5-x1x2-alpha-t}} 2\ell_{1,t}+2\ell_{2,t}+1
\end{equation}
and
\begin{equation} \label{eq:delta5-delta-alpha-t}
  \delta(\alpha^{(t)}) \eqByM{\eqref{eq:delta-alpha},\eqref{eq:delta5-x1x2-alpha-t},\eqref{eq:delta5-beta-alpha-t}}
   \min \left\{ 3+2\frac{\ell_{2,t}}{\ell_{1,t}} ,
                3+3\Big(\frac{\ell_{2,t}}{\ell_{1,t}}\Big)^{-1} ,
                1+\Big(4+2\frac{\ell_{2,t}}{\ell_{1,t}}\Big)\Big(\frac{\ell_{3,t}}{\ell_{1,t}}\Big)^{-1}
        \right\} \enspace.
\end{equation}
Using \eqref{eq:delta5-lim-ell-ratio} it follows from \eqref{eq:delta5-delta-alpha-t} that $\delta(\alpha^{(t)})\rightarrow \sqrt{6}+3$ as $t\rightarrow\infty$.

We now consider the case $c=6$.
If $p_2=p_3=1$ then we obtain
\begin{align}
  \delta(\alpha) &\leByM{\eqref{eq:beta-alpha},\eqref{eq:delta-alpha}}
              \min \{ \eqref{eq:ell-6-b12}, \eqref{eq:ell-6-b13}, \eqref{eq:ell-6-b14} \} \notag \\
  &\leByM{\eqref{eq:bounds-delta6-p2-eq-1}} \min \left\{ 4+\frac{3-2\mu_2+\mu_3}{1+\mu_2}, 6+\mu_2+\mu_3, 6+\frac{2+\mu_2-4\mu_3}{1+\mu_3} \right\} \enspace. \label{eq:delta6-bound}
\end{align}
Maximizing the right hand side of \eqref{eq:delta6-bound} for $\mu_2,\mu_3\in\mathbb{R}$ with $0\leq \mu_2<1$ and $0\leq \mu_3<1$, it is readily checked that the maximum is attained for
\begin{equation} \label{eq:delta6-mu23-sol}
  \mu_2=\frac{1}{6}(\sqrt{37}-5) \qquad \text{and} \qquad \mu_3=2\mu_2=\frac{1}{3}(\sqrt{37}-5)
\end{equation}
(corresponding to the ratios $\ell_2/\ell_1=1+\mu_2=\frac{1}{6}(\sqrt{37}+1)$ and $\ell_3/\ell_1=1+\mu_3=\frac{1}{3}(\sqrt{37}-2)$), yielding
\begin{equation} \label{eq:alpha6-bound-1}
  \delta(\alpha) \leByM{\eqref{eq:delta6-bound},\eqref{eq:delta6-mu23-sol}} \frac{1}{2}(\sqrt{37}+7) \enspace.
\end{equation}
If $p_2=1$ and $p_3\geq 2$ then an easy calculation shows that
\begin{equation} \label{eq:alpha6-bound-2}
  \delta(\alpha) \leByM{\eqref{eq:beta-alpha},\eqref{eq:delta-alpha}} \min \{ \eqref{eq:ell-6-b13}, \eqref{eq:ell-6-b14} \} < 6.4 < \frac{1}{2}(\sqrt{37}+7) \enspace.
\end{equation}
For the remaining case $p_2\geq 2$ it is elementary but somewhat tedious to check that
\begin{equation} \label{eq:alpha6-bound-3}
  \delta(\alpha) \leByM{\eqref{eq:beta-alpha},\eqref{eq:delta-alpha}} \min \left\{ \eqref{eq:ell-6-b21}, \eqref{eq:ell-6-b22}, \eqref{eq:ell-6-b23} \right\}
  < 6.4 < \frac{1}{2}(\sqrt{37}+7) \enspace.
\end{equation}
As the bounds in \eqref{eq:alpha6-bound-1}, \eqref{eq:alpha6-bound-2} and \eqref{eq:alpha6-bound-3} hold for all $\ell\geq 1$ and all $\alpha\in W(\ell,6)$ with $\alpha_{\ell+6-2}=2$ simultaneously, we obtain with the definition in \eqref{eq:delta-c} that
\begin{equation*}
  \delta(6) \leq \frac{1}{2}(\sqrt{37}+7) \enspace.
\end{equation*}
To see that this upper bound is tight, consider the family $(\alpha^{(t)})_{t\geq 0}$ of strategy sequences
\begin{equation*}
  \alpha^{(t)}:=(1)^{\ell_{1,t}-1}\circ(2)\circ(1)^{\ell_{2,t}-\ell_{1,t}}\circ(2)\circ(1)^{\ell_{3,t}-\ell_{2,t}}\circ(2,2,2) \in W(\ell_{3,t},6)
\end{equation*}
with $\ell_{1,t}:=10^t$, $\ell_{2,t}:=\lfloor \frac{1}{6}(\sqrt{37}+1)\cdot 10^t\rfloor$ and $\ell_{3,t}:=\lfloor \frac{1}{3}(\sqrt{37}-2)\cdot 10^t\rfloor$, i.e., we have
\begin{equation} \label{eq:delta6-lim-ell-ratio}
  \lim_{t\rightarrow\infty} \frac{\ell_{2,t}}{\ell_{1,t}} = \frac{1}{6}(\sqrt{37}+1) \qquad \text{and} \qquad
  \lim_{t\rightarrow\infty} \frac{\ell_{3,t}}{\ell_{1,t}} = \frac{1}{3}(\sqrt{37}-2)\enspace.
\end{equation}
For each such strategy sequence $\alpha^{(t)}$ the sequences $x_1$ and $x_2$ from the recursion~\eqref{eq:recursion} satisfy the relations in \eqref{eq:delta5-x1x2-alpha-t}, and in addition
\begin{equation} \label{eq:delta6-x25-alpha-t}
  x_{2,5} = 2\ell_{1,t}+2\ell_{2,t}+\ell_{3,t} \enspace, \\
\end{equation}
implying that
\begin{equation} \label{eq:delta6-beta-alpha-t}
  \beta(\alpha^{(t)}) \eqByM{\eqref{eq:beta-alpha},\eqref{eq:delta5-x1x2-alpha-t},\eqref{eq:delta6-x25-alpha-t}} 3\ell_{1,t}+\ell_{2,t}+\ell_{3,t}+1
\end{equation}
and
\begin{equation} \label{eq:delta6-delta-alpha-t}
  \delta(\alpha^{(t)}) \eqByM{\eqref{eq:delta-alpha},\eqref{eq:delta5-x1x2-alpha-t},\eqref{eq:delta6-x25-alpha-t},\eqref{eq:delta6-beta-alpha-t}}
   \min \left\{ 4+\frac{\ell_{2,t}}{\ell_{1,t}}+\frac{\ell_{3,t}}{\ell_{1,t}} ,
                2+\Big(4+\frac{\ell_{3,t}}{\ell_{1,t}}\Big)\Big(\frac{\ell_{2,t}}{\ell_{1,t}}\Big)^{-1} ,
                2+\Big(5+\frac{\ell_{2,t}}{\ell_{1,t}}\Big)\Big(\frac{\ell_{3,t}}{\ell_{1,t}}\Big)^{-1}
        \right\} \enspace.
\end{equation}
Using \eqref{eq:delta6-lim-ell-ratio} it follows from \eqref{eq:delta6-delta-alpha-t} that $\delta(\alpha^{(t)})\rightarrow \frac{1}{2}(\sqrt{37}+7)$ as $t\rightarrow\infty$.
\end{proof}

\end{document}